\newtheorem{theorem}{Theorem}[section]
\newtheorem{prop}[theorem]{Proposition}
\newtheorem{lemma}[theorem]{Lemma}
\newtheorem{coro}[theorem]{Corollary}
\theoremstyle{definition}
\newtheorem{defn}[theorem]{Definition}
\newtheorem{remark}[theorem]{Remark}
\newtheorem{exam}[theorem]{Example}
\newtheorem{prop-def}[theorem]{Proposition-Definition}
\newtheorem{coro-def}[theorem]{Corollary-Definition}
\newcommand{\nc}{\newcommand}
\nc{\tred}[1]{\textcolor{red}{#1}}
\nc{\tblue}[1]{\textcolor{blue}{#1}}
\nc{\tgreen}[1]{\textcolor{green}{#1}}
\nc{\tpurple}[1]{\textcolor{purple}{#1}}
\nc{\btred}[1]{\textcolor{red}{\bf #1}}
\nc{\btblue}[1]{\textcolor{blue}{\bf #1}}
\nc{\btgreen}[1]{\textcolor{green}{\bf #1}}
\nc{\btpurple}[1]{\textcolor{purple}{\bf #1}}
\nc{\NN}{{\mathbb N}}
\nc{\vsa}{\vspace{-.1cm}} \nc{\vsb}{\vspace{-.2cm}}
\nc{\vsc}{\vspace{-.3cm}} \nc{\vsd}{\vspace{-.4cm}}
\nc{\vse}{\vspace{-.5cm}}
\renewcommand{\textbf}[1]{}
\newcommand{\delete}[1]{}
\nc{\mlabel}[1]{\label{#1}}  % Use this to suppress names
\nc{\mcite}[1]{\cite{#1}}  % Use this to suppress names
\nc{\mref}[1]{\ref{#1}}  % Use this to suppress names
\nc{\meqref}[1]{\eqref{#1}}  % Use this to suppress names
\nc{\mbibitem}[1]{\bibitem{#1}} % Use this to show number name
\nc{\mlabel}[1]{\label{#1}  % Use the next two lines to show names
{\hfill \hspace{1cm}{\tt{{\ }\hfill(#1)}}}}
\nc{\mcite}[1]{\cite{#1}{{\tt{{\ }(#1)}}}}  % Use this lines to show names
\nc{\mref}[1]{\ref{#1}{{\tt{{\ }(#1)}}}}  % Use this lines to show names
\nc{\meqref}[1]{\eqref{#1}{{\tt{{\ }(#1)}}}}  % Use this lines to show names
\nc{\mbibitem}[1]{\bibitem[\bf #1]{#1}} % Use this to show name
\nc{\opa}{\ast} \nc{\opb}{\odot}  \nc{\pa}{\frakL}
\nc{\arr}{\rightarrow} \nc{\lu}[1]{(#1)} \nc{\mult}{\mrm{mult}}
\nc{\diff}{\mathfrak{Diff}}
\nc{\opc}{\sharp}\nc{\opd}{\natural}
\nc{\dpt}{\mathrm{d}}
\nc{\tforall}{\text{ for all }}
\nc{\diam}{alternating\xspace}
\nc{\Diam}{Alternating\xspace}
\nc{\cdiam}{alternating\xspace}
\nc{\Cdiam}{Alternating\xspace}
\nc{\AW}{\mathcal{A}}
\nc{\rba}{Rota-Baxter algebra\xspace}
\nc{\ari}{\mathrm{ar}}
\nc{\lef}{\mathrm{lef}}
\nc{\Sh}{\mathrm{ST}}
\nc{\Cr}{\mathrm{Cr}}
\nc{\st}{{Schr\"oder tree}\xspace}
\nc{\sts}{{Schr\"oder trees}\xspace}
\nc{\vertset}{\Omega} % set of vertex decorations
\nc{\assop}{\quad \begin{picture}(5,5)(0,0)
\line(-1,1){10}
\put(-2.2,-2.2){$\ob$}
\line(0,-1){10}\line(1,1){10}
\end{picture} \quad \smallskip}
\nc{\operator}{\begin{picture}(5,5)(0,0)
\line(0,-1){6}
\put(-2.6,-1.8){$\bullet$}
\line(0,1){9}
\end{picture}}
\nc{\idx}{\begin{picture}(6,6)(-3,-3)
\put(0,0){\line(0,1){6}}
\put(0,0){\line(0,-1){6}}
 \end{picture}}
\nc{\pb}{{\mathrm{pb}}}
\nc{\Lf}{{\mathrm{Lf}}}
\nc{\lft}{{left tree}\xspace}
\nc{\lfts}{{left trees}\xspace}
\nc{\fat}{{fundamental averaging tree}\xspace}
\nc{\fats}{{fundamental averaging trees}\xspace}
\nc{\avt}{\mathrm{Avt}}
\nc{\rass}{{\mathit{RAss}}}
\nc{\aass}{{\mathit{AAss}}}
\nc{\vin}{{\mathrm Vin}}    %decoration set of indices
\nc{\lin}{{\mathrm Lin}}    %decoration set of leaves
\nc{\inv}{\mathrm{I}n}
\nc{\gensp}{V} % space of generators
\nc{\genbas}{\mathcal{V}} % basis of the space of generators
\nc{\bvp}{V_P}     % Rota-Baxter generating space
\nc{\gop}{{\,\omega\,}}     % generic binary operation
\nc{\bin}[2]{ (_{\stackrel{\scs{#1}}{\scs{#2}}})}  %binomial coeff
\nc{\binc}[2]{ \left (\!\! \begin{array}{c} \scs{#1}\\
    \scs{#2} \end{array}\!\! \right )}  %binomial coeff
\nc{\bincc}[2]{  \left ( {\scs{#1} \atop
    \vspace{-1cm}\scs{#2}} \right )}  %binomial coeff
\nc{\bs}{\bar{S}} \nc{\cosum}{\sqsubset} \nc{\la}{\longrightarrow}
\nc{\rar}{\rightarrow} \nc{\dar}{\downarrow} \nc{\dprod}{**}
\nc{\dap}[1]{\downarrow \rlap{$\scriptstyle{#1}$}}
\nc{\md}{\mathrm{dth}} \nc{\uap}[1]{\uparrow
\rlap{$\scriptstyle{#1}$}} \nc{\defeq}{\stackrel{\rm def}{=}}
\nc{\disp}[1]{\displaystyle{#1}} \nc{\dotcup}{\
\displaystyle{\bigcup^\bullet}\ } \nc{\gzeta}{\bar{\zeta}}
\nc{\hcm}{\ \hat{,}\ } \nc{\hts}{\hat{\otimes}}
\nc{\barot}{{\otimes}} \nc{\free}[1]{\bar{#1}}
\nc{\uni}[1]{\tilde{#1}} \nc{\hcirc}{\hat{\circ}} \nc{\lleft}{[}
\nc{\lright}{]} \nc{\lc}{\lfloor} \nc{\rc}{\rfloor}
\nc{\curlyl}{\left \{ \begin{array}{c} {} \\ {} \end{array}
    \right .  \!\!\!\!\!\!\!}
\nc{\curlyr}{ \!\!\!\!\!\!\!
    \left . \begin{array}{c} {} \\ {} \end{array}
    \right \} }
\nc{\longmid}{\left | \begin{array}{c} {} \\ {} \end{array}
    \right . \!\!\!\!\!\!\!}
\nc{\onetree}{\bullet} \nc{\ora}[1]{\stackrel{#1}{\rar}}
\nc{\ola}[1]{\stackrel{#1}{\la}}%${\Bbb Z}$
\nc{\ot}{\otimes} \nc{\mot}{{{\boxtimes\,}}}
\nc{\otm}{\overline{\boxtimes}} \nc{\sprod}{\bullet}
\nc{\scs}[1]{\scriptstyle{#1}} \nc{\mrm}[1]{{\rm #1}}
\nc{\margin}[1]{\marginpar{\rm #1}}   %{\rm #1}}
\nc{\dirlim}{\displaystyle{\lim_{\longrightarrow}}\,}
\nc{\invlim}{\displaystyle{\lim_{\longleftarrow}}\,}
\nc{\mvp}{\vspace{0.3cm}} \nc{\tk}{^{(k)}} \nc{\tp}{^\prime}
\nc{\ttp}{^{\prime\prime}} \nc{\svp}{\vspace{2cm}}
\nc{\vp}{\vspace{8cm}} \nc{\proofbegin}{\noindent{\bf Proof: }}
\nc{\proofend}{$\blacksquare$ \vspace{0.3cm}}
\nc{\modg}[1]{\!<\!\!{#1}\!\!>}
\nc{\intg}[1]{F_C(#1)} \nc{\lmodg}{\!
<\!\!} \nc{\rmodg}{\!\!>\!}
\nc{\cpi}{\widehat{\Pi}}
\nc{\sha}{{\mbox{\cyr X}}}  %used to be \cyr
\nc{\shap}{{\mbox{\cyrs X}}} %sha as product
\nc{\shan}{{\overrightarrow \sha}}
\nc{\shpr}{\diamond}    %Shuffle product
\nc{\shp}{\ast} \nc{\shplus}{\shpr^+}
\nc{\shprc}{\shpr_c}    %Cartier's product
\nc{\msh}{\ast} \nc{\zprod}{m_0} \nc{\oprod}{m_1}
\nc{\vep}{\varepsilon} \nc{\labs}{\mid\!} \nc{\rabs}{\!\mid}
\nc{\sqmon}[1]{\langle #1\rangle}
\nc{\mmbox}[1]{\mbox{\ #1\ }} \nc{\fp}{\mrm{FP}}
\nc{\rchar}{\mrm{char}} \nc{\End}{\mrm{End}} \nc{\Fil}{\mrm{Fil}}
\nc{\Mor}{Mor\xspace} \nc{\gmzvs}{gMZV\xspace}
\nc{\gmzv}{gMZV\xspace} \nc{\mzv}{MZV\xspace}
\nc{\mzvs}{MZVs\xspace} \nc{\Hom}{\mrm{Hom}} \nc{\id}{\mrm{id}}
\nc{\im}{\mrm{im}} \nc{\incl}{\mrm{incl}} \nc{\map}{\mrm{Map}}
\nc{\mchar}{\rm char} \nc{\nz}{\rm NZ} \nc{\supp}{\mathrm Supp}
\nc{\Alg}{\mathbf{Alg}} \nc{\Bax}{\mathbf{Bax}} \nc{\bff}{\mathbf f}
\nc{\bfk}{{\bf k}} \nc{\bfone}{{\bf 1}} \nc{\bfx}{\mathbf x}
\nc{\bfy}{\mathbf y}
\nc{\base}[1]{\bfone^{\otimes ({#1}+1)}} %{{a_{#1}}}
\nc{\Cat}{\mathbf{Cat}}
\nc{\detail}{\marginpar{\bf More detail}
    \noindent{\bf Need more detail!}
    \svp}
\nc{\Int}{\mathbf{Int}} \nc{\Mon}{\mathbf{Mon}}
\nc{\rbtm}{{shuffle }} \nc{\rbto}{{Rota-Baxter }}
\nc{\remarks}{\noindent{\bf Remarks: }} \nc{\Rings}{\mathbf{Rings}}
\nc{\Sets}{\mathbf{Sets}} \nc{\wtot}{\widetilde{\odot}}
\nc{\wast}{\widetilde{\ast}} \nc{\bodot}{\bar{\odot}}
\nc{\bast}{\bar{\ast}} \nc{\hodot}[1]{\odot^{#1}}
\nc{\hast}[1]{\ast^{#1}} \nc{\mal}{\mathcal{O}}
\nc{\tet}{\tilde{\ast}} \nc{\teot}{\tilde{\odot}}
\nc{\oex}{\overline{x}} \nc{\oey}{\overline{y}}
\nc{\oez}{\overline{z}} \nc{\oef}{\overline{f}}
\nc{\oea}{\overline{a}} \nc{\oeb}{\overline{b}}
\nc{\weast}[1]{\widetilde{\ast}^{#1}}
\nc{\weodot}[1]{\widetilde{\odot}^{#1}} \nc{\hstar}[1]{\star^{#1}}
\nc{\lae}{\langle} \nc{\rae}{\rangle}
\nc{\lf}{\lfloor}
\nc{\rf}{\rfloor}
\nc{\QQ}{{\mathbb Q}}
\nc{\RR}{{\mathbb R}} \nc{\ZZ}{{\mathbb Z}}
\nc{\CC}{{\mathbb C}}
\nc{\cala}{{\mathcal A}} \nc{\calb}{{\mathcal B}}
\nc{\calc}{{\mathcal C}}
\nc{\cald}{{\mathcal D}} \nc{\cale}{{\mathcal E}}
\nc{\calf}{{\mathcal F}} \nc{\calg}{{\mathcal G}}
\nc{\calh}{{\mathcal H}} \nc{\cali}{{\mathcal I}}
\nc{\call}{{\mathcal L}} \nc{\calm}{{\mathcal M}}
\nc{\caln}{{\mathcal N}}\nc{\calo}{{\mathcal O}}
\nc{\calp}{{\mathcal P}} \nc{\calq}{\mathcal{Q}} \nc{\calr}{{\mathcal R}}
\nc{\cals}{{\mathcal S}} \nc{\calt}{{\mathcal T}}
\nc{\calu}{{\mathcal U}} \nc{\calw}{{\mathcal W}} \nc{\calk}{{\mathcal K}}
\nc{\calx}{{\mathcal X}} \nc{\CA}{\mathcal{A}}
\nc{\fraka}{{\mathfrak a}} \nc{\frakA}{{\mathfrak A}}
\nc{\frakb}{{\mathfrak b}} \nc{\frakB}{{\mathfrak B}}
\nc{\frakD}{{\mathfrak D}} \nc{\frakF}{\mathfrak{F}}
\nc{\frakf}{{\mathfrak f}} \nc{\frakg}{{\mathfrak g}}
\nc{\frakH}{{\mathfrak H}} \nc{\frakL}{{\mathfrak L}}
\nc{\frakM}{{\mathfrak M}} \nc{\bfrakM}{\overline{\frakM}}
\nc{\frakm}{{\mathfrak m}} \nc{\frakP}{{\mathfrak P}}
\nc{\frakN}{{\mathfrak N}} \nc{\frakp}{{\mathfrak p}}
\nc{\frakS}{{\mathfrak S}} \nc{\frakT}{\mathfrak{T}}
\nc{\frakX}{{\mathfrak X}} \nc{\frakx}{\mathfrak{x}}
\nc{\frakc}{{\mathfrak c}}
\nc{\frakd}{{\mathfrak d}}
\nc{\frake}{{\mathfrak e}}
\nc{\BS}{\mathbb{S}}
\font\cyr=wncyr10 \font\cyrs=wncyr7
\nc{\ID}{\mathfrak{I}} \nc{\lbar}[1]{\overline{#1}}
\nc{\bre}{{\rm b}}
\nc{\dep}{\mrm{d}}
\nc{\sd}{\cals} \nc{\rb}{\rm RB}
\nc{\A}{\rm angularly decorated\xspace} \nc{\LL}{\rm L}
\nc{\w}{\rm wid} \nc{\arro}[1]{#1}
\nc{\ver}{\rm ver}
\nc{\dd}{\diamond}
\nc{\dr}{\diamond_r}
\nc{\dg}{{\diamond_\frake}}
\nc{\dk}{\diamond_\bfk}
\nc{\shar}{{\mbox{\cyrs X}}_r} %shar as right-shift shuffle product
\nc{\shag}{{\mbox{\cyrs X}}_g}
\nc{\de}{\Delta}
\nc{\delg}{\Delta_\frake}
\nc{\da}{\Delta_A}
\nc{\dgg}{\Delta_g}
\nc{\va}{\vep_A }
\nc{\ve}{\vep }
\nc{\vg}{\vep_\frake }
\nc{\vgg}{\vep_g }
\nc{\bug}{\bullet_e}
\nc{\bt}{\bar{\ot}}
\nc{\pe}{{P_\frake}}
\nc{\mg}{\mu_e}
\nc{\fs}{\frakS}
\nc{\mapmonoid}{\frakM}
\nc{\ncsha}{{\mbox{\cyr X}^{\mathrm NC}}} 
\nc{\ncshao}{{\mbox{\cyr X}^{\mathrm NC}}}
\nc{\ce}{{\mbox{\cyr X}_\frake^{\mathrm NC}}(A)}
\nc{\ced}{{\mbox{\cyr X}_\frake^{\mathrm NC}}(T^+(D))}
\nc{\cet}{{\mbox{\cyr X}_\frake^{\mathrm NC}}(T)}
\nc{\dfgen}{V} \nc{\dfrel}{\Lambda}
\nc{\dfgenb}{\vec{v}} \nc{\dfrelb}{\vec{r}}
\nc{\dfgene}{v} \nc{\dfrele}{r}
\nc{\dfop}{\odot}
\nc{\dfoa}{\dfop^{(1)}} \nc{\dfob}{\dfop^{(2)}}
\nc{\dfoc}{\dfop^{(3)}} \nc{\dfod}{\dfop^{(4)}}
\nc{\erba}{\mathbf{ERBA}}
\nc{\ed}{\mathbf{ED}}
\nc{\etd}{\mathbf{ETD}}
\nc{\NS}{\mathbf{NS}}
\nc{\FN}{F_{\mathrm N}}
\nc{\ob}{\ \begin{picture}(-1,1)(-1,-3)\circle*{3}\end{picture}\ \,}%{\cdot}
\nc{\oc}{\circ}
\nc{\obp}{{\ \begin{picture}(-1,1)(-1,-3)\circle*{3}\end{picture}\  }_P}
\nc{\ocp}{\circ_P}
\nc{\pr}{\prec}
\nc{\su}{\succ}
\nc{\prp}{\prec_P}
\nc{\scp}{\succ_P}
\nc{\UN}{U_{N}}
\nc{\denshpr}{\den{\shpr}}
\nc{\den}[1]{\check{#1}}
\nc{\freea}[1]{\tilde{#1}}
\nc{\freev}[1]{\hat{#1}}
\nc{\speciall}{\mathrm{sl(2,\mathbb{C})}}
\nc{\wmrb}{extended Rota-Baxter algebra\xspace}
\nc{\wmrbs}{extended Rota-Baxter algebras\xspace}
\nc{\wmrbo}{extended Rota-Baxter operator\xspace}
\nc{\wmrbos}{extended Rota-Baxter operators\xspace}
\nc{\Wmrbos}{Extended Rota-Baxter operators\xspace}
\nc{\wmrbi}{extended Rota-Baxter identity\xspace}
\nc{\edr}{extended dendriform algebra\xspace}
\nc{\edrs}{extended dendriform algebras\xspace}
\nc{\et}{extended tridendriform algebra\xspace}
\nc{\ets}{extended tridendriform algebras\xspace}
\nc{\eqa}{extended quadri-algebra\xspace}
\nc{\eqas}{extended quadri-algebras\xspace}
\nc{\dft}{\star}
\nc{\nea}{\nearrow} % WARNING: \ne is "not equal to" sign
\nc{\se}{\searrow}
\nc{\sw}{\swarrow}
\nc{\nw}{\nwarrow}
\nc{\nep}{\ne^{op}}
\nc{\sep}{\se^{op}}
\nc{\swp}{\sw^{op}}
\nc{\nwp}{\nw^{op}}
\nc{\east}{\succ}
\nc{\west}{\prec}
\nc{\north}{\wedge}
\nc{\south}{\vee}
\nc{\eastt}{\east^t}
\nc{\westt}{\west^t}
\nc{\northt}{\north^t}
\nc{\trir}{\triangleright}
\nc{\tril}{\triangleleft}
\nc{\Loday}{\mrm{binary quadratic nonsymmetric}\xspace}
\nc{\dftimes}{\blacksquare}
\nc{\oeq}[1]{\stackrel{(#1)}{=}}
\nc{\dfpair}[2]{\left[
	{{#1}\atop{#2}}\right]} 
\nc{\oerb}{\mathcal{ERB}} %the operad of extended Rota-Baxter algebras
\nc{\oetd}{\mathcal{ETD}} %the operad of extended tridendriform algebras
\nc{\oed}{\mathcal{ED}}
\nc{\genpre}{{\text{DC}}(W)}
\nc{\morpre}{\Psi}
\nc{\relpre}{\text{DC}(S)}
\nc{\oppre}{\text{DC}(\calq)}
\nc{\genpost}{\text{TC}(W)}
\nc{\morpost}{\Phi}
\nc{\relpost}{\text{TC}(S)}
\nc{\oppost}{\text{TC}(\calq)}
\nc{\li}[1]{\textcolor{red}{#1}}
\nc{\lir}[1]{\textcolor{red}{Li:#1}}
\nc{\sh}[1]{\textcolor{blue}{Shanghua: #1}}
\nc{\hsy}[1]{\textcolor{orange}{Hsy: #1}}
\definecolor{darkred}{rgb}{0.7,0,0} % darkred color
\definecolor{darkgreen}{RGB}{0,180,0}
\begin{document}

\title[Extended tridendriform algebras and post-Lie algebras as companion structures]{Extended (tri)dendriform algebras, pre-Lie algebras and post-Lie algebras as companion structures of extended Rota-Baxter algebras}
%
%=========================================================================

\author{Shanghua Zheng}
\address{School of Mathematics and Statistics, Jiangxi Normal University, Nanchang, Jiangxi 330022, China}
\email{zhengsh@jxnu.edu.cn}

\author{Shiyu Huang}
\address{School of Mathematics and Statistics, Jiangxi Normal University, Nanchang, Jiangxi 330022, China}
\email{2640207650@qq.com}

\author{Li Guo}
\address{Department of Mathematics and Computer Science,
	Rutgers University,
	Newark, NJ 07102, USA}
\email{liguo@rutgers.edu}

%========================================================================
\date{\today}
%========================================================================
\begin{abstract}
Under the common theme of splitting of operations, the notions of (tri)dendriform algebras, pre-Lie algebras and post-Lie algebras have attracted sustained attention with broad applications. An important aspect of their studies is as the derived structures of Rota-Baxter operators on associative or Lie algebras. 
This paper introduces extended versions of (tri)dendriform algebras, pre-Lie algebras, and post-Lie algebras, establishing close relations among these new structures that generalize those among their classical counterparts. These new structures can be derived from the extended Rota-Baxter operator, which combines the standard Rota-Baxter operator and the modified Rota-Baxter operator. 
To characterize these new notions as {\it the} derived structures of extended Rota-Baxter algebras, we define the binary quadratic operad in companion with an operad with nontrivial unary operations. Then the extended (tri)dendriform algebra is shown to be the binary quadratic nonsymmetric operads in companion to the operad of extended Rota-Baxter algebras. 
As a key ingredient in achieving this and for its own right, the free extended Rota-Baxter algebra is constructed by bracketed words. 
\end{abstract}

\makeatletter
\@namedef{subjclassname@2020}{\textup{2020} Mathematics Subject Classification}
\makeatother
\subjclass[2020]{
17B38, %Yang-Baxter equations and Rota-Baxter operators
17D25, %Lie-admissible algebras
18M65, %Non-symmetric operads, multicategories, generalized multicategories
17A30, %Nonassociative algebras satisfying other identities
16S10, %Rings determined by universal properties (free algebras, coproducts, adjunction of inverses, etc.)
16W99, %Rings and algebras with additional structure/none of the above
%16T10, %Bialgebras
%16T30,  %Connections with combinatorics
%57R56
}

\keywords{Rota-Baxter algebra, modified Rota-Baxter algebra, dendriform algebra, tridendriform algebra, pre-Lie algebra, post-Lie algebra, non-symmetric operad}

\maketitle

\vspace{-1.5cm}

\tableofcontents

\vspace{-1.5cm}

\setcounter{section}{0}

\allowdisplaybreaks

%========================================================================
\section{Introduction}
This paper first introduces several nonassociative algebras generalizing the classical notions of (tri)dendriform algebras, pre-Lie algebras and post-Lie algebras. They can be derived from extended Rota-Baxter algebras. The paper then introduces a general notion of the binary quadratic operad in companion with a unary binary operad. Then extended (tri)dendriform algebras are shown to be precisely the companions of extended Rota-Baxter algebras.

\subsection{(Tri)dendriform algebras, pre-Lie algebras and post-Lie algebras}
Dendriform algebras, tridendriform algebras, pre-Lie algebras and post-Lie algebras are among a class of nonassociative algebraic structures which have been discovered at different times in the past few decades and have recently been unified under a common theme.  

The notion of a dendriform algebra was introduced by Loday~\mcite{Lo1} with motivation from algebraic $K$-theory.
A {\bf dendriform algebra} is a vector space $D$ with two binary operations $\prec$ and $\succ$ that satisfy the following relations:
	\begin{eqnarray}
		(x \prec y) \prec z &=& x \prec (y\prec z +y \succ z),\mlabel{eq:dd1}\\
		(x \succ y ) \prec z&=& x \succ (y\prec z),\mlabel{eq:dd2}  \\
		(x \prec y +x\succ y)\succ z &=& x \succ (y\succ z), \quad x, y, z\in D.
		\mlabel{eq:dd3}
	\end{eqnarray}
Introduced by Loday and Ronco  to study  polytopes and Koszul duality~\mcite{LR2}, a tridendriform algebra has three binary operations $\prec, \succ, \cdot$ satisfying seven relations. Afterwards, several generalizations and extensions of this notion have appeared~\mcite{BGN4,BR10,Cha02,Li18}. 
Dendriform algebras and tridendriform algebras share a common property,  that is, the sum of their binary operations is an associative operation. To describe this phenomena in general, the idea of splitting of the associativity was proposed by Loday~\mcite{Lo2}, and a general construction was obtained in~\mcite{BBGN,PBG}. 

As it turns out, the splitting of the Lie algebra has appeared much earlier. A {\bf pre-Lie algebra} is a vector space $A$ equipped with a binary operation $\circ$ such that, for all $x,y,z\in A$,
$$(x\circ y)\circ z-x\circ(y\circ z) =(y\circ x)\circ z-y\circ (x\circ z).$$
The commutator $[x,\,y]:=x\circ y-y\circ x$ defines a Lie bracket. Pre-Lie algebra was introduced by Gerstenhaber to 
study the deformations and cohomology theory of associative algebras~\mcite{Ge63}. Among other names, it is also called a left-symmetric algebra by Vinberg to study convex homogenous cones~\mcite{Vi63}. For surveys of this important notion, see~\mcite{Bai,Man}. 
A dendriform algebra $A$ induces a pre-Lie algebra structure on $A$ by the binary operation $x\circ y:=x\su y-y\pr x$, and so leads to a Lie algebra structure on $A$ via the commutator.  

A post-Lie algebra was introduced by Vallette in ~\cite{Va07} from his study of Koszul duality of operads and appeared in differential geometry and numerical integration on manifolds. For some of its numerous applications in mathematics, computer science and mathematical physics, see~\cite{BGN,BGST,BK,CEO,Dot,GLS21,LST}. 
A tridendriform algebra $(A,\prec,\succ,\cdot)$ gives rise to a post-Lie algebra structure on $A$ by anti-symmetrization. 
The above relations can be summarized in the following diagram.
\begin{equation}
	\begin{split}
		\xymatrix{\txt{\small  Tridendriform\\ algebra} \ar[d] \ar[rr]\ar[d]&&\txt{\rm\, Associative algebra}\ar[d] \ar[d]&& \txt{\rm  Dendriform \\algebra} \ar[ll]\ar[d]\\
			\txt{\rm Post-Lie\\ algebra}\ar[rr]&&\txt{\rm  Lie algebra} 
			&& \txt{\rm Pre-Lie\\ algebra}\ar[ll]
		}
		\mlabel{eq:dcom}
	\end{split}
\end{equation}

\subsection{(Tri)dendriform algebras, pre-Lie algebras and post-Lie algebras as derived structures of Rota-Baxter algebras}
A theme that unifies the previous notions is that they can all be derived from a Rota-Baxter operator acting on an associative algebra or a Lie algebra.  

The study of Rota-Baxter algebras originated from the 1960 work of G. Baxter on the fluctuation theory in probability~\mcite{Ba}.  
For a fixed scalar $\lambda$, a {\bf Rota-Baxter algebra of weight $\lambda$} is a pair $(R, P)$ consisting of an associative algebra $R$ and a linear operator $P:R\to R$  satisfying the {\bf Rota-Baxter identity}
\begin{equation*}
	P(x)P(y)=P(xP(y))+P(P(x)y)+\lambda P(xy),\;x,y\in R.
\end{equation*}
Then $P$ is called a {\bf Rota-Baxter operator of weight $\lambda$}. 

Differing to Rota-Baxter operator by an affine transformation, the structure of modified Rota-Baxter algebras already appeared in the 1951 study of Tricomi on ergodic theory~\mcite{Tri}, and was again developed by Cotlar in 1955~\mcite{Co}. Here a linear operator  $Q$ is called a {\bf modified Rota-Baxter operator of weight $\kappa$} if 
\begin{equation*}
	Q(x)Q(y)=Q(xQ(y))+Q(Q(x)y)+\kappa xy,\;x,y\in R.
\end{equation*}
Then $(R,Q)$ is called a {\bf modified Rota-Baxter algebra of weight $\kappa$}. Recent studies of modified Rota-Baxter algebras can be found in~\mcite{JS24,Li18,ZGG191,ZGG192,ZZG23}. 

As a remarkable coincidence, their Lie algebra analogs were independently discovered by Semenov-Tian-Shansky in 1983~\mcite{STS} as the operator forms of the classical Yang-Baxter equation. These classical operators have attracted renewed interests in recent years thanks to their broad applications to areas including renormalization
of quantum field theory~\mcite{CK00}, Yang-Baxter equations~\mcite{BGN,BGN2}, pre-Lie algebras~\mcite{Bai,Cha01}, operads~\mcite{Ag1,BBGN,WZ}, cohomology and deformations~\mcite{TBGS}, combinatorics~\mcite{YGT} and Lie groups \mcite{BGST,GLS21}.

A distinct aspect of the renewed interest on Rota-Baxter algebras comes from their induced structures.
Aguiar~\mcite{Ag1} first discovered that a Rota-Baxter algebra $(A,P)$ of weight $0$  carries a  dendriform algebra structure on $A$ given by
\begin{equation}\mlabel{eq:rbd}
	x\prec_P y:=xP(y),\quad x\succ_P y:=P(x)y,\quad x,y\in A.
\end{equation}
The connection between Rota-Baxter algebras of weight $\lambda$ and tridendriform algebras~\mcite{EF} was also obtained.
Let $P$ be a Rota-Baxter operator of weight $0$ (resp. $1$) on a Lie algebra $\frakg$. Then the binary operation 
$x\circ y:=[P(x),y]$
gives rise to a pre-Lie algebra (resp. post-Lie algebra) structure on $\frakg$~\mcite{BGN,GS}.
Such connections were pursued further and established broadly for the splittings of operads~\mcite{BBGN,Dot,Gu18,PBG}.

\subsection{Derived structures of extended Rota-Baxter algebras} 
As a common generalization of the Rota-Baxter algebra and modified Rota-Baxter algebra, extended Rota-Baxter algebras were introduced and studied systematically, motivated by their applications to classical and associative Yang-Baxter equations~\mcite{BGN,BGN2,ZGQ}.
This provides a broader context and further motivation to study the induced structures of extended Rota-Baxter algebras and in particular of modified Rota-Baxter algebras. This is the purpose of this paper. 

More precisely, the purpose of this paper is two-fold, summarized in the following two paragraphs, in completion of each other. The first part, consisting of Section~\mref{sec:sdn}, introduces extended versions of (tri)dendriform algebras and pre-/post-Lie algebras, and shows that they are derived from extended Rota-Baxter associative and Lie algebras, respectively. The second part, comprised of Sections~\mref{sec:FERBA} and \mref{sec:wmrbet}, establishes a general framework to define {\em the} (binary quadratic) companion structure of an associative algebra equipped with linear operators. This framework allows us to show that the structures introduced in the first parts are not only derived from the extended Rota-Baxter associative or Lie algebras --- they are the companion structures of the extended Rota-Baxter associative or Lie algebras.  

\subsubsection{Extended dendriform type algebras and their derivations from extended Rota-Baxter algebras} 
In the first part of the paper (Section~\mref{sec:sdn}), we introduce the notions of \edrs and \ets generalizing the notions of dendriform algebras and tridendriform algebras. We also introduce the notions of extended pre-Lie algebras and extended post-Lie algebras, generalizing the notions of pre-Lie algebras and post-Lie algebras. The interconnections among these structures are established, extending the diagram in Eq.~\meqref{eq:dcom} to the following commutative diagram (Theorem~\mref{thm:edcom}).
\begin{equation}
{\small	\begin{split}
		\xymatrix{\txt{\rm  Extended tridendriform\\ algebra}\ar[d] \ar@/2pc/[rr]\ar[d]&&\txt{\rm\quad Associative\\ algebra}\ar[d] \ar[d]&& \txt{\rm  Extended dendriform \\algebra} \ar@/2pc/[ll]\ar[d]
			\\
			\txt{\rm Extended post-Lie\\ algebra}\ar[rr]&&\txt{\rm  Lie \\algebra} 
			&& \txt{\rm Extended pre-Lie\\ algebra}\ar[ll]}
		\mlabel{eq:edcom}
	\end{split}
}
\end{equation}

In fact, the extended tridendriform algebras and extended post-Lie algebras are double weighted just like extended Rota-Baxter associative algebras and Lie algebras. These general notions not only unify several previous notions, they are also shown to be derived from extended Rota-Baxter operators on associative algebras and Lie algebras respectively, generalizing the classical relations~\mcite{Ag1,BGN,EF,GS}, as depicted in the following diagram. Here the newly introduced notions are in boldface.  

\xymatrix@C=0.001ex@R=3ex{
	& \txt{ Rota-Baxter\\ algebra of weight $\lambda$}  \ar@<.1ex>@{-->}[dd]  \ar@<.1ex>@{->}[rr]
	& &\txt{ Rota-Baxter\\ algebra of weight $0$}  \ar@<.1ex>@{->}[dd]|-{\rm Cor.\mref{co:merbt}\meqref{it:mo2}}
	&      \\
	\txt{\bf Extended Rota-Baxter\\ \bf algebra of weight $(\lambda,\kappa)$}  \ar@<.1ex>@{->}[dd]|-{\rm Thm.\mref{thm:erbt}}\ar@<.1ex>@{->}[rr] \ar@<.1ex>@{->}[ur]
	& &\txt{Modified Rota-Baxter\\ algebra of weight $\kappa$}  \ar@<.1ex>@{->}[dd]|------>{\rm Cor.\mref{co:merbt}\meqref{it:mo1}}   \ar@<.1ex>@{->}[ur]
	& &    \\
	&  \txt{Tridendriform\\ algebra of weight $\lambda$ }    \ar@<.1ex>@{-->}[dd]|----->{\rm Prop.\mref{prop:triap}}  \ar@<.1ex>@{-->}[rr]|-------<{\rm Rmk.\mref{rk:cases}\meqref{it:rk3}}
	&  & \txt{\bf Tridendriform \\\bf algebra of weight $0$}  \ar@<.1ex>@{->}[dd]|-{\rm Prop.\mref{prop:triap}} 
	&     \\
	\txt{\bf Extended tridendriform \\ \bf algebra of weight $(\lambda,\kappa)$} \ar@<.1ex>@{->}[dd]|-{\rm Prop.\mref{prop:etap}}  \ar@<.1ex>@{->}[rr]|-------<{\rm Rmk.\mref{rk:cases}\meqref{it:rk2}} \ar@<.1ex>@{-->}[ur]|-{\rm Rmk.\mref{rk:cases}\meqref{it:rk1}}
	&  & \txt{\bf Modified tridendriform\\\bf algebra of weight $\kappa$}   \ar@<.1ex>@{->}[dd]|------>{\rm Prop.\mref{prop:mtmp}} \ar@<.1ex>@{->}[ur]|----->{\rm Rmk.\mref{rk:cases}\meqref{it:rk3}}
	&  &     \\
	&  \txt{\bf Post-Lie \\\bf algebra of weight $\lambda$}    \ar@<.1ex>@{-->}[rr]|-------<{\rm Rmk.\mref{rk:epost}\meqref{it:epost3}}
	&  & \txt{\bf Post-Lie \\\bf algebra of weight $0$}   
	&     \\
	\txt{\bf Extended post-Lie \\ \bf algebra of weight $(\lambda,\kappa)$}   \ar@<.1ex>@{->}[rr]|--------<{\rm Rmk.\mref{rk:epost}\meqref{it:epost2}} \ar@<.1ex>@{-->}[ur]|----->{\rm Rmk.\mref{rk:epost}\meqref{it:epost1}}
	&  & \txt{\bf Modified post-Lie \\\bf algebra of weight $\kappa$}   \ar@<.1ex>@{->}[ur]|----->{\rm Rmk.\mref{rk:epost}\meqref{it:epost3}}
	&  & \\
	&  \txt{ Rota-Baxter Lie\\ algebra of weight $\lambda$} \ar@<.1ex>@{-->}[uu]|------<{\rm Cor.\mref{co:postl}\meqref{it:mp2}}   \ar@<.1ex>@{-->}[rr]
	&  & \txt{Rota-Baxter Lie \\algebra of weight $0$}  \ar@<.1ex>@{->}[uu]|-{\rm Cor.\mref{co:postl}\meqref{it:mp3}} 
	&     \\
	\txt{\bf Extended Rota-Baxter Lie\\ \bf algebra of weight $(\lambda,\kappa)$} \ar@<.1ex>@{->}[uu]|-{\rm Thm.\mref{prop:eepost}}  \ar@<.1ex>@{->}[rr] \ar@<.1ex>@{-->}[ur]
	&  & \txt{Modified Rota-Baxter Lie\\ algebra of weight $\kappa$}  \ar@<.1ex>@{->}[uu]|------<{\rm Cor.\mref{co:postl}\meqref{it:mp1}} \ar@<.1ex>@{->}[ur]
	&  &   }

\subsubsection{Determining all derived structures of extended Rota-Baxter algebras}
Knowing that extended (tri)dendrifrom algebras are derived from extended Rota-Baxter algebras as indicated above, 
one is naturally led to the converse question: what other quadratic nonsymmetric relations can derived this way? In fact, there have been numerous studies on algebraic structures derived from an {\bf operated algebra}~\mcite{Guo09}, defined to be a pair $(R,P)$ where $R$ can be an associative algebra or a Lie algebra or any algebra with binary operations, and $P$ is a linear operator on $R$, such as a derivation, Rota-Baxter operator, Nijenhuis operator or averaging operato. One critical yet often mysterious aspect of these studies is to obtain binary structures that are derived from the operated algebras, and furthermore to determine all such binary structuresr~\mcite{BBGN,DS24,HBG23,Le04,LLB23,ML24,PBG}. See~\mcite{KSO} for the case of derivations and~\mcite{LG} for the case of Nijenhuis (associative) algebras.

To give a precise formulation and answer to this type of questions, we first introduce the general notions of tri-companion and di-companion of the operad of an operated algebra (Definitions~\mref{de:postind} and \mref{de:preind}). These companions give all binary quadratic operads that can be possibly derived from the operad of the operated algebra. We then use the construction of free \wmrbs (Theorem~\mref{thm:free}) to show that the operads of extended tridendriform algebras and extended dendriform algebras are respectively the tri-companion and di-companion of the operad of \wmrbs (Theorems~\mref{thm:owdn} and \mref{thm:edn}). This di-companion leads to the notion and classification of more general extended dendriform algebras (Definition~\mref{defn:type}). 

\smallskip

\noindent
{\bf Convention.} Throughout this paper, $\bfk$ is a field of characteristic $0$.  A $\bfk$-algebra is taken to be
nonunitary associative unless otherwise specified.

%%%%%%%%%%%%%%%%%%%%%%%%%%%%%%%%%%%%%%%%%%%%%%%%%%%%%%%%%%%%%%%%%%
%%%%%%%%%%%%%%%%%%%%%%%%%%%%%%%%%%%%%%%%%%%%%%%%%%%%%%%%%%%%%%%%%%

\section{Extended (tri)dendriform algebras, extended pre-Lie algebras and extended post-Lie algebras}
\mlabel{sec:sdn}
After the introduction of tridendriform algebras by Loday and Ronco~\mcite{LR2}, several generalizations and extensions have appeared~\mcite{BGN4,BR10,Cha02,Li18}. 
We give a common generalization of these notions. Furthermore, by  simply compressing the three relations in a dendriform algebra into two relations, we obtain the notion of an \edr. Their relationships to extended Rota-Baxter algebras are also established in analog to those between dendriform algebras and tridendriform algebras and Rota-Baxter algebras.

\subsection{Extended tridendriform algebras and extended dendriform algebras}
Recall that a {\bf tridendriform algebra} \mcite{LR2} is
		a vector space $T$ equipped with three binary operations $\prec,\succ$ and
		$\ob$ that satisfy the following relations. 
			\begin{align*}
				(x\prec y)\prec z&=x\prec (y\star z), \\
				(x\succ y)\prec z&=x\succ (y\prec z), \\
				(x\star y)\succ z&=x\succ (y\succ z), \\
				(x\succ y)\ob z&=x\succ (y\ob z),\\
				(x\prec y)\ob z&=x\ob (y\succ z),\\
				(x\ob y)\prec z&=x\ob (y\prec z),\\
				(x\ob y)\ob z&=x\ob (y\ob z), \quad x, y, z\in T. 
			\end{align*}
		Here $\star=\prec+\succ+\ob$.
		
We now give our key notion generalizing the tridendriform algebra and related structures.
 
\begin{defn}\mlabel{defn:etri} Let $\lambda,\kappa \in\bfk$ be fixed. An {\bf \et  of weight $(\lambda,\kappa)$}, or simply an {\bf \et}, is a vector space $A$ with three binary operations $\pr,\su$ and $\ob$ that satisfy the following relations.
\begin{align}
(x\pr y)\pr z&= x\pr (y\star_\lambda z)+\kappa x\ob(y\ob z), \mlabel{eq:e1}\\
(x\su y)\pr z&=x\su (y\pr z), \mlabel{eq:e2}\\
(x\star_\lambda y)\su z+\kappa (x\ob y)\ob z&=x\su(y\su z), \mlabel{eq:e3}\\
(x\su y)\ob z&=x\su (y\ob z), \mlabel{eq:e4}\\
(x\pr y)\ob z&=x\ob (y\su z), \mlabel{eq:e5}\\
(x\ob y)\pr z&=x\ob (y\pr z), \mlabel{eq:e6}\\
(x\ob y)\ob z&=x\ob (y\ob z), \quad x,y,z \in A,\mlabel{eq:e7}
\end{align}
where $\star_\lambda$ is the abbreviation
\begin{equation}
	\star_\lambda:=\pr +\su +\lambda \ob,
\mlabel{eq:lstar}
\end{equation}
\end{defn}
This notion unifies several tridendriform-type structures as follows. 
\begin{remark}
	\begin{enumerate}
\item When $\kappa=0$, an \et of weight $(\lambda,\kappa)$ is called a  {\bf tridendriform algebra of weight $\lambda$}  (or simply a {\bf $\lambda$-tridendriform algebra})~\cite{BGN4,BR10,ZZWG}. In particular, when $\lambda=1$, it is just the tridendriform algebra recalled above.
\mlabel{it:rk1} 
\item
When $\lambda=0$, then the \et of weight $(\lambda,\kappa)$ is called a {\bf modified tridendriform algebra of weight $\kappa$}. Particularly, 
 when $\kappa=1$, then it recovers the modified tridendriform algebra given in \cite{Li18}. 
\mlabel{it:rk2} 
\item When $\lambda=\kappa=0$, then the \et of weight $(\lambda,\kappa)$ is also called a {\bf tridendriform algebra of weight 0}, which recovers the notion of a {\bf $\mathcal{K}$-algebra} in ~\cite{BR10,Cha02}. Note that a tridendriform algebra of weight $0$ is different from a dendriform algebra. The extended variant of the latter will be introduced in Definition~\mref{defn:ed}. 
\mlabel{it:rk3} 
\end{enumerate}
\mlabel{rk:cases}
\end{remark}

From a tridendriform algebra $(T,\pr,\su,\ob)$, there is an associative structure on $(T,\star)$ given by $\star: =\pr+\su+\ob$~\mcite{LR2}. More generally, we have 

\begin{prop}\mlabel{prop:eta} Let $(A,\pr,\su,\ob)$ be an \et of weight $(\lambda,\kappa)$. Then equipped with the multiplication $\star_\lambda$ defined in Eq.~\meqref{eq:lstar}, 
$A$ is an associative algebra.
\end{prop}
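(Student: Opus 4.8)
The plan is to establish the associativity identity $(x\star_\lambda y)\star_\lambda z = x\star_\lambda(y\star_\lambda z)$ by a direct expansion. Substituting $\star_\lambda=\pr+\su+\lambda\ob$ into the left-hand side produces nine left-nested terms $(x\,\omega_1\,y)\,\omega_2\,z$ with $\omega_1,\omega_2\in\{\pr,\su,\ob\}$ (carrying coefficients $1$, $\lambda$, or $\lambda^2$), while the right-hand side expands into the nine corresponding right-nested terms $x\,\omega_1\,(y\,\omega_2\,z)$. The strategy is to rewrite every left-nested term into right-nested form using Eqs.~\meqref{eq:e1}--\meqref{eq:e7} and then match coefficients.

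First I would dispose of the six left-nested terms whose outer operation is $\pr$ or $\ob$. Five of them are converted to a single right-nested term by the ``single-term'' relations: Eq.~\meqref{eq:e2} for $(x\su y)\pr z$, Eq.~\meqref{eq:e6} for $(x\ob y)\pr z$, Eq.~\meqref{eq:e5} for $(x\pr y)\ob z$, Eq.~\meqref{eq:e4} for $(x\su y)\ob z$, and Eq.~\meqref{eq:e7} for $(x\ob y)\ob z$. The sixth, $(x\pr y)\pr z$, is rewritten by Eq.~\meqref{eq:e1} as $x\pr(y\star_\lambda z)+\kappa\, x\ob(y\ob z)$, which contributes the three $\pr$-nested terms together with a correction $+\kappa\, x\ob(y\ob z)$.

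Next I would treat the three remaining left-nested terms, all having outer operation $\su$, namely $(x\pr y)\su z$, $(x\su y)\su z$ and $\lambda(x\ob y)\su z$. Their sum is precisely $(x\star_\lambda y)\su z$, so Eq.~\meqref{eq:e3} converts it into $x\su(y\su z)-\kappa(x\ob y)\ob z$, and one more application of Eq.~\meqref{eq:e7} turns the correction into $-\kappa\, x\ob(y\ob z)$. Adding up all the rewritten contributions and comparing with the expanded right-hand side, every right-nested monomial appears with matching coefficient; in particular, the coefficient of $x\ob(y\ob z)$ on the left is $\kappa-\kappa+\lambda^2=\lambda^2$, agreeing with the term coming from $\lambda^2(x\ob y)\ob z$ on the right. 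I expect the calculation to be routine bookkeeping, with the only subtle point being this cancellation of the two $\kappa$-corrections --- one produced by Eq.~\meqref{eq:e1} and one by Eq.~\meqref{eq:e3} --- which is exactly what makes the weight-$(\lambda,\kappa)$ relations restore ordinary associativity for $\star_\lambda$.
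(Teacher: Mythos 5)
Your proof is correct and follows essentially the same route as the paper's: expand both sides of the associativity identity, rewrite each left-nested term via Eqs.~\meqref{eq:e1}--\meqref{eq:e7} (grouping the three terms with outer $\su$ so that Eq.~\meqref{eq:e3} applies to the block $(x\star_\lambda y)\su z$), and observe that the two $\kappa$-corrections cancel after one further use of Eq.~\meqref{eq:e7}. The only blemish is a harmless wording slip at the end: the term $\lambda^2\, x\ob(y\ob z)$ on the right-hand side is described as ``coming from $\lambda^2(x\ob y)\ob z$,'' but that left-nested term belongs to the expansion of the left-hand side.
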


\begin{proof}
For all $x,y,z\in A$, we have
\begin{eqnarray*}
&&(x\star_\lambda y)\star_\lambda z\\
&=&(x\star_\lambda y)\pr z+(x\star_\lambda y)\su z+\lambda(x\star_\lambda y)\ob z\\
&=&(x\pr y+ x\su y+\lambda x\ob y)\pr z+ (x\pr y+ x\su y+\lambda x\ob y)\su z +\lambda(x\pr y+ x\su y+ \lambda x\ob y)\ob z\\
&=&x\pr (y\pr z+y\su z+\lambda y\ob z)+\kappa x\ob(y\ob z)+ x\su (y\pr z)
+ \lambda x\ob(y \pr z)+ x\su (y\su z)\\
&&-\kappa(x\ob y)\ob z+\lambda x\ob (y\su z)+\lambda x\su (y\ob z)+\lambda^2x\ob (y\ob z)\quad(\text{by Eqs.~\meqref{eq:e1}-\meqref{eq:e7}})\\
&=&x\pr (y\pr z+y\su z+\lambda y\ob z)+ x\su (y\pr z)
+ \lambda x\ob(y \pr z)+ x\su (y\su z)\\
&&+\lambda x\ob (y\su z)+\lambda x\su (y\ob z)+\lambda^2x\ob (y\ob z)\quad(\text{by Eq~\meqref{eq:e7}}).
\end{eqnarray*}
On the other hand, we get
\begin{eqnarray*}
	&&x\star_\lambda (y\star_\lambda z)\\
	&=&x\star_\lambda(y\pr z+y\su z+\lambda y\ob z)\\
	&=& x \pr (y\pr z+y\su z+\lambda y\ob z)+ x\su (y\pr z+y\su z+\lambda y\ob z)+\lambda x\ob (y\pr z+y\su z+\lambda y\ob z).
\end{eqnarray*}
Thus $(x\star_\lambda y)\star_\lambda z=x\star_\lambda (y\star_\lambda z)$, giving the associativity of $\star_\lambda$.
\end{proof}

The important notions of Rota-Baxter algebras and modified Rota-Baxter algebras are merged together to give \wmrbs in~\mcite{BGN2,ZGQ}.
\begin{defn}
Let $R$ be a $\bfk$-algebra and let $\lambda,\kappa\in \bfk$. A linear map $P:R\to R$ is called an {\bf \wmrbo (ERBO) of weight $(\lambda,\kappa)$} if $P$ satisfies the {\bf extended Rota-Baxter identity}
\begin{equation}P(x)P(y)=P\big(xP(y)\big)+P\big(P(x)y\big)+\lambda P(xy)+\kappa xy , \quad  x,y\in R.
\mlabel{eq:grb}
\end{equation}
Then the pair $(R,P)$ is called an {\bf\wmrb of weight $(\lambda,\kappa)$}, or simply an {\bf \wmrb}.

An \wmrb homomorphism $f: (R,P)\to (S,Q)$ between two \wmrbs $(R,P)$ and  $(S,Q)$ of the same weight $(\lambda,\kappa)$ is a $\bfk$-algebra homomorphism such that $f\circ P = Q\circ f$.
\mlabel{defn:freeGRBA}
\end{defn}

When $\kappa=0$, we recover the notion of a Rota-Baxter algebra; while when $\lambda=0$, we recover the notion of a modified Rota-Baxter algebra. 

As is well known, a Rota-Baxter algebra naturally induces a dendriform algebra or a tridendriform algebra~\mcite{Ag1,EF}. More generally, we have 

\begin{theorem}Fix $\lambda,\kappa\in\bfk$. Let $(R,P)$ be an \wmrb of weight $(\lambda,\kappa)$. Define
\begin{equation}x\prp y:=xP(y), \quad x\scp y:=P(x)y, \quad x\obp y:=  xy,\quad  x,y\in R.
\mlabel{eq:prp}
\end{equation}
Then $(R,\prp,\scp,\obp)$ is an \et of weight $(\lambda,\kappa)$.
\mlabel{thm:erbt}
\end{theorem}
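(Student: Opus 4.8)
The plan is to substitute the three operations defined in Eq.~\meqref{eq:prp} directly into each of the seven defining relations \meqref{eq:e1}--\meqref{eq:e7} and to reduce both sides to expressions in the single associative product of $R$, invoking the extended Rota-Baxter identity \meqref{eq:grb} only where $P$ is applied to a composite argument. The cornerstone of the argument is a reformulation of Eq.~\meqref{eq:grb} as a formula for $P$ on the $\star_\lambda$-product: under the definitions \meqref{eq:prp} one has $x\star_\lambda y = xP(y)+P(x)y+\lambda xy$, so that, applying $P$ and using \meqref{eq:grb},
\begin{equation*}
P(x\star_\lambda y)=P\big(xP(y)\big)+P\big(P(x)y\big)+\lambda P(xy)=P(x)P(y)-\kappa xy.
\end{equation*}
I would record this identity first and use it as a lemma throughout the verification.

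With this in hand, the five relations \meqref{eq:e2}, \meqref{eq:e4}, \meqref{eq:e5}, \meqref{eq:e6}, \meqref{eq:e7} are immediate: each side translates into a product of three elements of $R$, with $P$ applied to the same fixed factors on both sides, so they follow solely from the associativity of $R$. For instance \meqref{eq:e2} reads $(P(x)y)P(z)=P(x)(yP(z))$ and \meqref{eq:e7} is simply $(xy)z=x(yz)$; no appeal to \meqref{eq:grb} is needed for these.

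The two relations carrying the weight-$\kappa$ correction, namely \meqref{eq:e1} and \meqref{eq:e3}, are where the key identity does the real work, and these constitute the only genuine (though still routine) step. For \meqref{eq:e1}, the left side is $(xP(y))P(z)$, while the right side is $xP(y\star_\lambda z)+\kappa x(yz)$; substituting $P(y\star_\lambda z)=P(y)P(z)-\kappa yz$ turns the right side into $x\big(P(y)P(z)-\kappa yz\big)+\kappa xyz=xP(y)P(z)$, matching the left side after the two $\kappa xyz$ contributions cancel. Relation \meqref{eq:e3} is dual: its left side $P(x\star_\lambda y)z+\kappa(xy)z$ becomes $\big(P(x)P(y)-\kappa xy\big)z+\kappa xyz=P(x)P(y)z$, which equals the right side $P(x)(P(y)z)$. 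I expect the main (and only) obstacle to be the bookkeeping of the $\kappa$-terms so that the spurious $\kappa xyz$ pieces cancel exactly; once the $P(\star_\lambda\cdot)$-identity above is in place, the entire verification is a short computation.
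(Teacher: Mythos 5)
Your proposal is correct and takes essentially the same route as the paper: direct substitution of the operations from Eq.~\meqref{eq:prp} into the seven relations, with the extended Rota-Baxter identity \meqref{eq:grb} invoked precisely in Eqs.~\meqref{eq:e1} and \meqref{eq:e3}, the remaining five being pure associativity of $R$. The only differences are presentational: you package the rearranged identity $P(x\star_\lambda y)=P(x)P(y)-\kappa xy$ as a standing lemma and verify \meqref{eq:e1}, \meqref{eq:e3} from the side containing $\star_\lambda$, whereas the paper expands $P(y)P(z)$ (resp.\ $P(x)P(y)$) directly and leaves Eqs.~\meqref{eq:e4}--\meqref{eq:e7} as ``proved similarly.''
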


In Section~\mref{sec:wmrbet} we will provide this result with an operadic interpretation which also allows us to give an inverse (Theorem~\mref{thm:owdn}), meaning that the \et is all there is that can be induced from the \wmrb. 

\begin{proof}	
We just verify the first three relations in Eqs.~\meqref{eq:e1}--\meqref{eq:e3}. The other relations are proved similarly. For all $x,y,z\in R$, we have
\begin{align*}
(x\prp y)\prp z={}&x(P(y)P(z))\\
={}&x(P(yP(z)+P(y)z+\lambda yz)+\kappa xy)\\
={}&xP(yP(z)+P(y)z+\lambda yz)+\kappa xyz\\
={}& x\prp (y\prp z+y\scp z+\lambda y\obp z)+\kappa x\obp(y\obp z).
\end{align*}
$$(x\scp y)\prp z=(P(x)y)P(z)=P(x)(yP(z))
=x\scp (y\prp z).
$$
\begin{align*}
x\scp(y\scp z)={}&P(x)(P(y)z)\\
={}&(P(xP(y)+P(x)y+\lambda xy)+\kappa xy)z\\
={}&(xP(y)+P(x)y+\lambda xy)\scp z+\kappa (xy)\obp z\\
={}&(x\prp y+x\scp y+\lambda x\obp y)\scp z+\kappa (x\obp y)\obp z.
\qedhere
\end{align*}
\end{proof}

Taking the special cases when $\lambda=0$ and furthermore $\kappa=0$, we obtain

\begin{coro} \begin{enumerate}
	\item Let $(R,P)$ be a modified Rota-Baxter algebra of weight $\kappa$. 
Then $(R,\prp,\scp,\obp)$ is a  modified tridendriform algebra of weight $\kappa$.
\mlabel{it:mo1}
\item Let $(R,P)$ be a Rota-Baxter algebra of weight $0$. 
Then $(R,\prec_P,\succ_P,\obp)$ is a tridendriform algebra of weight $0$.
\mlabel{it:mo2}
\end{enumerate}
\mlabel{co:merbt}
\end{coro}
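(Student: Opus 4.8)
The plan is to obtain both statements as immediate specializations of Theorem~\mref{thm:erbt}, the only work being to fix the two weight parameters correctly and then to translate the resulting terminology through Remark~\mref{rk:cases}. There is no new computation to perform: all the relation-checking has already been carried out in the proof of Theorem~\mref{thm:erbt}.

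For part~(\mref{it:mo1}), I would first recall that, by the convention stated immediately after Definition~\mref{defn:freeGRBA}, a modified Rota-Baxter algebra of weight $\kappa$ is precisely an \wmrb of weight $(0,\kappa)$. Applying Theorem~\mref{thm:erbt} with $\lambda=0$ then shows that the operations $\prp,\scp,\obp$ defined in Eq.~\meqref{eq:prp} make $(R,\prp,\scp,\obp)$ an \et of weight $(0,\kappa)$. Finally, Remark~\mref{rk:cases}\,(\mref{it:rk2}) identifies an \et of weight $(0,\kappa)$ with a modified tridendriform algebra of weight $\kappa$, which is the desired conclusion.

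For part~(\mref{it:mo2}), the same bookkeeping applies with both weights set to zero: a Rota-Baxter algebra of weight $0$ is an \wmrb of weight $(0,0)$, so Theorem~\mref{thm:erbt} with $\lambda=\kappa=0$ produces an \et of weight $(0,0)$, which by Remark~\mref{rk:cases}\,(\mref{it:rk3}) is exactly a tridendriform algebra of weight $0$.

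Since each part is a direct substitution of weight values into an already-established theorem, I expect no genuine obstacle; the only point demanding care is the terminological matching between the extended framework and its degenerate cases, and this has been arranged in advance by Definition~\mref{defn:freeGRBA} and Remark~\mref{rk:cases}. Accordingly, the write-up should be a two-sentence invocation of Theorem~\mref{thm:erbt} for each item rather than a calculation.
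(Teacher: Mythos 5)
Your proposal is correct and matches the paper's own treatment: the paper likewise obtains Corollary~\mref{co:merbt} as an immediate specialization of Theorem~\mref{thm:erbt}, taking $\lambda=0$ for part~(\mref{it:mo1}) and $\lambda=\kappa=0$ for part~(\mref{it:mo2}), with the terminology resolved exactly as in Remark~\mref{rk:cases}. No further commentary is needed.
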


We note again that a Rota-Baxter algebra of weight zero gives rise to a tridendriform algebra of weight zero, which is not a dendriform algebra. We next give an extended variant of the dendriform algebra. Its motivation and precise relation with the dendriform algebra will be clarified in Theorem~\mref{thm:edn}. 
In fact, an extended dendriform algebra is  a type II extended dendriform algebra of weight $(\lambda,\kappa)$ that will be defined in Definition~\mref{defn:type}.

\begin{defn}\mlabel{defn:ed}
 An {\bf \edr} is a vector space $A$ with two binary operations $\pr$ and $ \su$ that satisfy the following relations:
\begin{align}
	(x\pr y)\pr z+(x\star y)\su z&=x\pr (y\star z)+x\su (y\su z).\mlabel{eq:ed1}\\
	(x\su y)\pr z&=x\su (y\pr z),\quad x,y,z\in A. \mlabel{eq:ed2}
\end{align}
	Here  $\star =\pr +\su $.
\mlabel{defn:edri}
\end{defn}

For a given dendriform algebra, adding Eqs.~\meqref{eq:dd1} and \meqref{eq:dd3} gives Eq.~\meqref{eq:ed1}, and Eq.~\meqref{eq:dd2} is the same as Eq.~\meqref{eq:ed2}. Thus, we obtain 
\begin{prop}	\mlabel{prop:ded}
	Every dendriform algebra is an \edr.
\end{prop}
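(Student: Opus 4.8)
The plan is to verify directly that the two defining relations of an \edr, namely Eqs.~\meqref{eq:ed1} and \meqref{eq:ed2} of Definition~\mref{defn:ed}, are satisfied by an arbitrary dendriform algebra $(A,\pr,\su)$. The only preliminary point to settle is notational: a dendriform algebra carries no third operation $\ob$, so the abbreviation $\star=\pr+\su$ of Definition~\mref{defn:ed} agrees with the combination $\pr+\su$ that appears (written out as $y\pr z+y\su z$) in the dendriform axioms Eqs.~\meqref{eq:dd1}--\meqref{eq:dd3}. Once this identification is in place, no further structure is needed and the proof reduces to manipulating the three given identities.

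First I would dispose of Eq.~\meqref{eq:ed2}, which reads $(x\su y)\pr z=x\su (y\pr z)$. This is literally the dendriform relation Eq.~\meqref{eq:dd2}, so it holds with nothing to prove. The substantive step is Eq.~\meqref{eq:ed1}, which I would obtain by summing two of the dendriform axioms. Rewriting Eq.~\meqref{eq:dd1} as $(x\pr y)\pr z=x\pr(y\star z)$ and Eq.~\meqref{eq:dd3} as $(x\star y)\su z=x\su(y\su z)$, I add the two valid identities term by term: the left-hand sides combine to $(x\pr y)\pr z+(x\star y)\su z$ and the right-hand sides combine to $x\pr(y\star z)+x\su(y\su z)$, which is exactly Eq.~\meqref{eq:ed1}. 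Since the sum of two identities that hold for all $x,y,z\in A$ is again an identity, this establishes Eq.~\meqref{eq:ed1}.

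With both Eqs.~\meqref{eq:ed1} and \meqref{eq:ed2} verified, $(A,\pr,\su)$ qualifies as an \edr, which completes the argument. I do not anticipate any genuine obstacle: the proof is a direct rearrangement and summation of the dendriform axioms, and the only place demanding a moment of care is confirming that $\star$ denotes the same composite operation in both the dendriform and the extended dendriform settings, after which the two relations fall out immediately.
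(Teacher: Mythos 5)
Your proof is correct and matches the paper's argument exactly: the paper also obtains Eq.~\meqref{eq:ed1} by adding Eqs.~\meqref{eq:dd1} and \meqref{eq:dd3}, and observes that Eq.~\meqref{eq:dd2} is identical to Eq.~\meqref{eq:ed2}. Your extra remark about the consistency of the abbreviation $\star=\pr+\su$ in both settings is a fine point of care but does not change the substance.
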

Moreover, by adding Eqs.~\meqref{eq:ed1} and ~\meqref{eq:ed2} together, we have 
\begin{prop}Let $(A,\pr,\su)$ be an \edr. Then the product $\star:=\pr+\su$ is associative.
\mlabel{prop:eda}
\end{prop}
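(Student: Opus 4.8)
The plan is to verify associativity of $\star=\pr+\su$ directly, following the hint that it should fall out of adding the two defining relations Eqs.~\meqref{eq:ed1} and \meqref{eq:ed2}. Concretely, I would expand $(x\star y)\star z$ and $x\star(y\star z)$ according to $\star=\pr+\su$ and then match the resulting terms against the two axioms, exploiting the bilinearity of $\pr$ and $\su$ throughout.

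First I would add Eq.~\meqref{eq:ed1} and Eq.~\meqref{eq:ed2}. On the resulting left-hand side one collects the three terms $(x\pr y)\pr z$, $(x\su y)\pr z$ and $(x\star y)\su z$. By bilinearity of $\pr$ together with $\pr+\su=\star$, the first two combine into $(x\star y)\pr z$, so the whole left-hand side collapses to $(x\star y)\pr z+(x\star y)\su z=(x\star y)\star z$. Symmetrically, on the right-hand side the sum produces $x\pr(y\star z)$, $x\su(y\su z)$ and $x\su(y\pr z)$; the last two combine via bilinearity of $\su$ into $x\su(y\pr z+y\su z)=x\su(y\star z)$, so the right-hand side becomes $x\pr(y\star z)+x\su(y\star z)=x\star(y\star z)$. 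Equating the two sides gives $(x\star y)\star z=x\star(y\star z)$ for all $x,y,z\in A$.

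There is no genuine obstacle here: the argument is a single regrouping that relies only on the bilinearity of the two operations and the definition $\star=\pr+\su$, and it uses both axioms exactly once, with no auxiliary lemma required. The only point deserving a moment of care is keeping track of which summands merge on each side, since the left side merges the two $\pr$-terms while the right side merges the two $\su$-terms; once this bookkeeping is done the identity is immediate.
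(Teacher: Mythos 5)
Your proof is correct and is exactly the paper's argument: the paper itself derives this proposition ``by adding Eqs.~(\ref{eq:ed1}) and (\ref{eq:ed2}) together,'' and your regrouping via bilinearity ($\pr$-terms merging on the left, $\su$-terms merging on the right) is precisely the bookkeeping the paper leaves implicit.
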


One can also derive an \edr from an extended Rota-Baxter algebra with arbitrary weight. 

\begin{theorem}
Let $(R,P)$ be an \wmrb of weight $(\lambda,\kappa)$. Let $\prp$ and $ \scp$ be defined by 
\begin{equation*}
x\prp y:=xP(y)+\lambda xy,\quad x\scp y:=P(x)y,\quad x,y\in R.
\end{equation*}
Then $(R,\prp,\scp)$ is an \edr. 
\mlabel{thm:erbd}
\end{theorem}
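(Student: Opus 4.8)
The plan is to verify the two defining relations \meqref{eq:ed1} and \meqref{eq:ed2} of an \edr for the operations $\prp$ and $\scp$, whose sum $\star:=\prp+\scp$ satisfies $x\star y=xP(y)+\lambda xy+P(x)y$. Relation \meqref{eq:ed2} is immediate: expanding $(x\scp y)\prp z$ and $x\scp(y\prp z)$ by bilinearity, both sides reduce, using only the associativity of $R$, to $P(x)(yP(z))+\lambda P(x)(yz)$, with no appeal to the extended Rota-Baxter identity.

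For relation \meqref{eq:ed1}, rather than expanding from scratch, I would reduce to Theorem~\mref{thm:erbt}, which already makes $R$ an \et of weight $(\lambda,\kappa)$ under the operations $x\pr y:=xP(y)$, $x\su y:=P(x)y$ and $x\ob y:=xy$. The operations of the present theorem are then recovered by $\prp=\pr+\lambda\ob$ and $\scp=\su$, so that $\star=\prp+\scp=\pr+\su+\lambda\ob=\star_\lambda$ in the notation of Eq.~\meqref{eq:lstar}. It therefore suffices to establish the following purely algebraic statement: in any \et $(A,\pr,\su,\ob)$ of weight $(\lambda,\kappa)$, the operations $\pr':=\pr+\lambda\ob$ and $\su':=\su$ define an \edr on $A$.

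To prove this statement I would expand $(x\pr' y)\pr' z$ by bilinearity into the four terms $(x\pr y)\pr z$, $\lambda(x\ob y)\pr z$, $\lambda(x\pr y)\ob z$ and $\lambda^2(x\ob y)\ob z$, and rewrite them using Eqs.~\meqref{eq:e1}, \meqref{eq:e6}, \meqref{eq:e5} and \meqref{eq:e7} respectively; separately I would rewrite $(x\star' y)\su' z=(x\star_\lambda y)\su z$ by Eq.~\meqref{eq:e3}. The crucial point is that the term $\kappa\,x\ob(y\ob z)$ produced by Eq.~\meqref{eq:e1} cancels precisely against the term $-\kappa(x\ob y)\ob z$ coming from Eq.~\meqref{eq:e3} (after normalizing via Eq.~\meqref{eq:e7}). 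What remains is $x\pr(y\star_\lambda z)+\lambda\,x\ob(y\star_\lambda z)+x\su(y\su z)$, which equals $x\pr'(y\star' z)+x\su'(y\su' z)$ by the definitions of $\pr'$, $\su'$ and $\star'$; this is exactly Eq.~\meqref{eq:ed1}.

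The computation is routine, and the only delicate step is the bookkeeping of the weight-$\lambda$ and weight-$\kappa$ terms, above all the cancellation of the weight-$\kappa$ contribution. This cancellation is what forces the combination $\pr'=\pr+\lambda\ob$ and explains why the weight-$\kappa$ datum becomes invisible in the two-operation \edr structure while surviving inside the associative product $\star_\lambda=\star$. A reader preferring a self-contained verification may instead substitute the definitions of $\prp$ and $\scp$ directly into Eqs.~\meqref{eq:ed1}--\meqref{eq:ed2} and apply the extended Rota-Baxter identity \meqref{eq:grb} to the two products $P(y)P(z)$ and $P(x)P(y)$ that arise; the weight-$\kappa$ terms $\kappa\,xyz$ then cancel in the same manner, again yielding the identity.
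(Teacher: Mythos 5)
Your proposal is correct, and for the key relation \meqref{eq:ed1} it takes a genuinely different route from the paper. The paper proves both relations by direct computation in $R$: it expands $(x\prp y)\prp z+(x\star_P y)\scp z$ with $\star_P=\prp+\scp$, applies the extended Rota--Baxter identity \meqref{eq:grb} twice --- once to expand $xP(y)P(z)$, and once to rewrite $P\big(xP(y)+P(x)y+\lambda xy\big)z$ as $P(x)P(y)z-\kappa xyz$ --- and lets the two resulting terms $\pm\kappa\, xyz$ cancel; this is precisely the ``self-contained verification'' you sketch in your closing sentence, and your treatment of \meqref{eq:ed2} (associativity alone) is also the paper's. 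Your main argument for \meqref{eq:ed1} is instead a factorization through Theorem~\mref{thm:erbt}: the operations of the present theorem arise from those of Theorem~\mref{thm:erbt} by $\pr\,\mapsto\,\pr+\lambda\ob$, $\su\,\mapsto\,\su$, so it suffices to prove the operator-free lemma that in any \et of weight $(\lambda,\kappa)$ these new operations form an \edr. Your verification of that lemma is sound: Eqs.~\meqref{eq:e1}, \meqref{eq:e6}, \meqref{eq:e5} and \meqref{eq:e7} reduce $(x\pr' y)\pr' z$ to $x\pr(y\star_\lambda z)+\lambda\, x\ob(y\star_\lambda z)+\kappa\, x\ob(y\ob z)$, Eq.~\meqref{eq:e3} together with Eq.~\meqref{eq:e7} reduces $(x\star_\lambda y)\su z$ to $x\su(y\su z)-\kappa\, x\ob(y\ob z)$, and the $\kappa$-terms cancel, leaving exactly the right-hand side of \meqref{eq:ed1}. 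The trade-off is this: your route is more modular and conceptually sharper, since it isolates an extended analogue of the classical passage from a $\lambda$-tridendriform algebra to a dendriform algebra via $\pr+\lambda\ob$ and $\su$, explains structurally why $\kappa$ becomes invisible in the two-operation axioms while surviving in $\star_\lambda$, and makes Theorem~\mref{thm:erbd} an immediate corollary of Theorem~\mref{thm:erbt}; the paper's route is self-contained, which matters slightly here because your reduction relies on relations \meqref{eq:e5}--\meqref{eq:e7} of Theorem~\mref{thm:erbt}, whose verification the paper only indicates (``the other relations are proved similarly'').
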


\begin{proof}Let $\star_P:=\prp+\scp$. For all $x,y,z\in R$, we have 
	\begin{align*}
	(x\prp y)\prp z+(x\star_P y)\scp z={}&(xP(y)+\lambda xy)\prp z+(xP(y)+P(x)y+\lambda xy)\scp z\\
	={}&(xP(y)+\lambda xy)P(z)+\lambda xP(y)z+\lambda ^{2}xyz+P(xP(y)+P(x)y+\lambda xy)z\\
	={}&xP(y)P(z)+\lambda xyP(z)+\lambda xP(y)z+\lambda ^{2}xyz+P(x)P(y)z-\kappa xyz\\
	={}&xP(P(y)z+yP(z)+\lambda yz)+\lambda x(P(y)z+yP(z)+\lambda yz)+P(x)P(y)z\\
	={}&xP(y\star_P z)+\lambda x(y\star_P z)+P(x)(y\scp z)\\
	={}&x\prp (y\star_P z)+x\scp (y\scp z).
	\end{align*}
Furthermore,
	\begin{align*}
	(x\scp y)\prp z={}&(x\scp y)P(z)+\lambda (x\scp y)z\\
	={}&(P(x)y)P(z)+\lambda P(x)yz\\
	={}&P(x)(y\prp z)\\
	={}&x\scp (y\prp z).
	\end{align*}
Thus $(R,\prp,\scp)$ is an \edr.
\end{proof}

Taking the special case when $\lambda=0$, we obtain the following induced structure of modified Rota-Baxter algebras. 
\begin{coro}
If $(R,P)$ is a modified Rota-Baxter algebra, then $(R,\prp,\scp)$ is an \edr.
\mlabel{co:modend}
\end{coro}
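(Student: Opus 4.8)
The plan is to obtain this as an immediate specialization of Theorem~\mref{thm:erbd}. The key observation, already recorded after Definition~\mref{defn:freeGRBA}, is that a modified Rota-Baxter algebra of weight $\kappa$ is precisely an \wmrb of weight $(0,\kappa)$: the defining identity $Q(x)Q(y)=Q(xQ(y))+Q(Q(x)y)+\kappa xy$ is exactly Eq.~\meqref{eq:grb} with $\lambda=0$. So no new verification is required --- one simply substitutes $\lambda=0$ into the statement of Theorem~\mref{thm:erbd}.

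First I would set $\lambda = 0$ in the two operations defined in that theorem. The left operation becomes $x\prp y = xP(y)+0\cdot xy = xP(y)$, since the $\lambda xy$ correction term vanishes; the right operation $x\scp y = P(x)y$ is unchanged. Then I would invoke Theorem~\mref{thm:erbd}, which asserts that for any \wmrb of weight $(\lambda,\kappa)$ --- in particular for weight $(0,\kappa)$ --- the pair $(R,\prp,\scp)$ with these operations satisfies the defining relations Eqs.~\meqref{eq:ed1}--\meqref{eq:ed2} of an \edr. This delivers the conclusion directly.

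There is no genuine obstacle here, as the corollary is a pure specialization; the only point worth flagging is conceptual rather than computational. One should note that the resulting left operation $x\prp y = xP(y)$ coincides formally with the dendriform product induced by a weight-zero Rota-Baxter operator in Eq.~\meqref{eq:rbd}, even though $P$ is now a modified (rather than standard weight-zero) operator. The distinction is that here the structure produced is an \edr rather than an honest dendriform algebra, precisely because the modified identity carries the $\kappa xy$ term that forces the weaker two-relation axioms of Definition~\mref{defn:ed} in place of the three dendriform relations~\meqref{eq:dd1}--\meqref{eq:dd3}. I would therefore present the proof in one or two lines, stating the identification of the weight and citing Theorem~\mref{thm:erbd}, rather than re-deriving the relations.
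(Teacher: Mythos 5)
Your proof is correct and matches the paper exactly: the paper also obtains Corollary~\ref{co:modend} as the immediate specialization $\lambda=0$ of Theorem~\ref{thm:erbd}, with no further computation. Your added remark distinguishing the resulting \edr\ from an honest dendriform algebra is accurate and consistent with the paper's surrounding discussion.
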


\subsection{Extended tridendriform algebras and extended post-Lie algebras}
\mlabel{ss:epostlie}
This section gives the notion of an extended post-Lie algebra, which can be derived from extended tridendriform algebras; while extended post-Lie algebras give rise to Lie algebras. We show that an extended Rota-Baxter Lie algebra of weight $(\lambda,\kappa)$ also induces an extended post-Lie algebra.
\begin{defn}Let $\lambda,\kappa\in \bfk$ be fixed.
An {\bf extended post-Lie algebra of weight $(\lambda,\kappa)$} or simply an {\bf  extended post-Lie algebra} is a  vector space $A$ equipped with two
binary operations $\circ$ and $[\,,\,]$, where $[\,,\,]$ is a Lie bracket and $\circ$ satisfies the  following  compatibility conditions: for all $x,y,z\in A$,
\begin{align}
(x\circ y)\circ z -(y\circ x)\circ z +\lambda [x, y]\circ z +\kappa [[x,y],z]&= x\circ (y\circ z)-y\circ (x\circ z),\mlabel{eq:post1}\\
x  \circ [y,z] &= [x\circ y , z ] +[y,  x\circ z]. \mlabel{eq:post2}
\end{align}
\end{defn}
\begin{remark}
\begin{enumerate}
\item
 When $\kappa=0$, then an extended post-Lie algebra $(A,[\,,\,],\circ)$ of weight $(\lambda,\kappa)$ is
called a {\bf post-Lie algebra of weight $\lambda$}, that is, $[\,,\,]$ is a Lie bracket and $\circ$ satisfies Eq.~\meqref{eq:post2} and 
	\begin{equation}
		(x\circ y)\circ z -(y\circ x)\circ z +\lambda [x, y]\circ z=x\circ (y\circ z)-y\circ (x\circ z),\quad x,y,z\in A.\mlabel{eq:qpost1}
	\end{equation}
In particular, if $\lambda=1$, then a post-Lie algebra of weight $\lambda$ is a classical post-Lie algebra.
\mlabel{it:epost1}
\item\mlabel{it:epost2} When $\lambda=0$, then an extended post-Lie algebra $(A,[\, ,\,],\circ)$ of weight $(\lambda,\kappa)$ is called a {\bf modified post-Lie algebra of weight $\kappa$}, that is, $[\,,\,]$ is a Lie bracket and $\circ$ satisfies Eq.~\meqref{eq:post2} and
	\begin{equation}
		(x\circ y)\circ z -(y\circ x)\circ z + \kappa [[x,y],z]=x\circ (y\circ z)-y\circ (x\circ z),\quad x,y,z\in A.\mlabel{eq:mpost1}
	\end{equation}
\item
When $\lambda=\kappa=0$,  an extended post-Lie algebra $(A,[\,,\,],\circ)$ of weight $(\lambda,\kappa)$ is called a {\bf post-Lie algebra of weight $0$}. 
\mlabel{it:epost3}
\item Let $(A, \circ,[\,,\,])$ be an extended post-Lie algebra. If $[\,,\,]$ is abelian, then  $(A, \circ)$ is a  pre-Lie algebra. 
\end{enumerate}
\mlabel{rk:epost}
\end{remark}

\begin{prop}
Let $(A,\pr,\su,\ob)$ be an \et of weight $(\lambda,\kappa)$. Define 
	\begin{align*}
		[x,y]:&=x\ob y-y\ob x, \\
		x\circ y:&= x\succ y -y\prec x,\quad x,y\in A. 
	\end{align*}
	Then $(A,[\ ,\ ],\circ)$ is an extended post-Lie algebra of weight $(\lambda,\kappa)$.
\mlabel{prop:etap}
\end{prop}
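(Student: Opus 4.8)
The plan is to verify directly the three requirements for $(A,[\,,\,],\circ)$ to be an extended post-Lie algebra of weight $(\lambda,\kappa)$: that $[\,,\,]$ is a Lie bracket, the weighted left-symmetry~\meqref{eq:post1}, and the derivation identity~\meqref{eq:post2}. In each case I would substitute $[x,y]=x\ob y-y\ob x$ and $x\circ y=x\su y-y\pr x$ and reduce using the relations~\meqref{eq:e1}--\meqref{eq:e7}. The organizing observation I would exploit is that these seven relations split cleanly by role: the associativity relation~\meqref{eq:e7} alone yields the Lie bracket, the three ``mixed'' relations~\meqref{eq:e4}--\meqref{eq:e6} yield~\meqref{eq:post2}, and the three ``core'' relations~\meqref{eq:e1}--\meqref{eq:e3} yield~\meqref{eq:post1}.

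For the bracket, antisymmetry of $[x,y]=x\ob y-y\ob x$ is immediate, and the Jacobi identity is the standard fact that the commutator of an associative product is a Lie bracket, associativity of $\ob$ being exactly~\meqref{eq:e7}. For~\meqref{eq:post2} I would expand $x\circ[y,z]$, $[x\circ y,z]$ and $[y,x\circ z]$ (eight $\su/\pr/\ob$-monomials each) and normalize every monomial via~\meqref{eq:e4}--\meqref{eq:e6}, which govern how $\su$ and $\pr$ interact with $\ob$; the monomials then match across the identity or cancel in pairs. I anticipate no genuine difficulty here.

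The substance lies in the weighted left-symmetry~\meqref{eq:post1}. I would expand both sides into $\su/\pr/\ob$-monomials and bring them to a normal form. On the right-hand side the two monomials of shape $x\su(y\su z)$ are rewritten by~\meqref{eq:e3}, producing the $(\cdot\su\cdot)\su\cdot$ and $(\cdot\pr\cdot)\su\cdot$ monomials \emph{together with} the weight terms $\lambda(\cdot\ob\cdot)\su\cdot$ and $\kappa(\cdot\ob\cdot)\ob\cdot$; likewise the two monomials of shape $(\cdot\pr\cdot)\pr\cdot$ are rewritten by~\meqref{eq:e1}, producing the $\pr$-nested monomials together with $\lambda\,\cdot\pr(\cdot\ob\cdot)$ and $\kappa\,\cdot\ob(\cdot\ob\cdot)$. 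After these two substitutions, every monomial of the left-hand side --- including all four terms of $\lambda[x,y]\circ z$ and all four terms of $\kappa[[x,y],z]$ --- is matched, and the only leftover monomials are the four mixed ones of shapes $(\cdot\su\cdot)\pr\cdot$ and $\cdot\su(\cdot\pr\cdot)$, which cancel in two pairs by~\meqref{eq:e2}. I expect the main obstacle to be purely organizational: keeping track of signs and confirming that the $\kappa$-weighted $\ob\ob$-monomials, which arise from \emph{both}~\meqref{eq:e1} and~\meqref{eq:e3} (in the two distinct shapes $(\cdot\ob\cdot)\ob z$ and $z\ob(\cdot\ob\cdot)$), align precisely with the four terms of $\kappa[[x,y],z]$, and similarly that the $\lambda$-weighted monomials coming from~\meqref{eq:e1}, \meqref{eq:e3} and $\lambda[x,y]\circ z$ balance. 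Fixing one normal form for each monomial type before comparing the two sides is how I would keep this bookkeeping reliable.
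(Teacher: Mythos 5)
Your proposal is correct and follows essentially the same route as the paper: a direct expansion of both sides of \eqref{eq:post1} and \eqref{eq:post2} into $\prec/\succ/\ob$-monomials, with the Lie bracket coming from the associativity \eqref{eq:e7}, identity \eqref{eq:post2} from the mixed relations \eqref{eq:e4}--\eqref{eq:e6}, and identity \eqref{eq:post1} from \eqref{eq:e1}--\eqref{eq:e3}, the four leftover mixed monomials cancelling in pairs by \eqref{eq:e2}. Your accounting of which of the seven relations is used where is in fact slightly more precise than the paper's own inline citations (which mention only \eqref{eq:e1}, \eqref{eq:e3} for the first identity and \eqref{eq:e4}, \eqref{eq:e7} for the second), though the paper's displayed groupings use exactly the relations you name.
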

\begin{proof}
By Definition~\mref{defn:etri},	$(A,\ob)$ is an associative algebra,  and so $[\, ,\,]$ is a Lie bracket. We shall now verify Eqs.~\meqref{eq:post1} and ~\meqref{eq:post2}. For all $x,y,z\in A$, we have
\begin{align*}
	&(x\circ y)\circ z - (y\circ x)\circ z+\lambda [x, y]\circ z +\kappa [[x,y],z]- x\circ (y\circ z)+y\circ (x\circ z)\\
	=&\big((x\circ y)\succ z-z\prec (x\circ y)\big)-\big((y\circ x)\succ z-z\prec (y\circ x)\big)+\lambda \big([x,y]\succ z-z\prec [x,y]\big)\\
	&+\kappa \big([x,y]\ob z-z\ob [x,y]\big)-\big(x\succ (y\circ z)-(y\circ z)\prec x\big)+\big(y\succ(x\circ z)- (x\circ z)\prec y\big)\\
	=&\big((x\succ y -y\prec x)\succ z-z\prec (x\succ y -y\prec x)\big)-\big((y\succ x-x\prec y)\succ z-z\prec (y\succ x-x\prec y)\big)\\
	&+\lambda \big((x\ob y-y\ob x)\succ z -z\prec (x\ob y-y\ob x)\big)+\kappa \big((x\ob y-y\ob x)\ob z-z\ob (x\ob y-y\ob x)\big)\\
	&-\big(x\succ (y\succ z-z\prec y)-(y\succ z-z\prec y)\prec x\big)+\big(y\succ (x\succ z-z\prec x)-  (x\succ z-z\prec x)\prec y\big)\\
	=&\Big((x\succ y+x\prec y+\lambda x\ob y)\succ z+\kappa (x\ob y)\ob z-x\succ (y\succ z)\Big)\\
	&-\Big((y\prec x+y\succ x+\lambda y\ob x)\succ z+\kappa (y\ob x)\ob z-y\succ(x\succ z)\Big)\\
	&+\Big((z\prec x)\prec y-z\prec (x\prec y+x\succ y+\lambda x\ob y)-\kappa z\ob (x\ob y)\Big)\\
	&+\Big(z\prec (y\prec x+y\succ x+\lambda y\ob x)+\kappa z\ob (y\ob x)-(z\prec y)\prec x\Big)\\
	&+\Big(x\succ (z\prec y)-(x\succ z)\prec y\Big)+\Big((y\succ z)\prec x-y\succ (z\prec x)\Big)\\
	=&0
\end{align*}
by Eqs.~\meqref{eq:e1} and ~\meqref{eq:e3}. Then we obtain Eq.~\meqref{eq:post1}.
On the other hand, we have
		\begin{align*}
			&x\circ [y,z]-[x\circ y,z]-[y,x\circ z]
			\\=&(x\succ [y,z]-[y,z]\prec x)-((x\circ y)\ob z-z\ob (x\circ y))-(y\ob (x\circ z)-(x\circ z)\ob y)
			\\=&(x\succ (y\ob z-z\ob y)-(y\ob z-z\ob y)\prec x)
			\\&-((x\succ y-y\prec x)\ob z-z\ob(x\succ y-y\prec x))
			\\&-(y\ob(x\succ z-z\prec x)-(x\succ z-z\prec x)\ob y)
			\\=&(x\succ (y\ob z)-(x\succ y)\ob z)-(x\succ (z\ob y)-(x\succ z)\ob y)
			\\&-((y\ob z)\prec x-y\ob(z\prec x))+((z\ob y)\prec x-z\ob(y\prec x))
			\\&+((y\prec x)\ob z-y\ob(x\succ z))+(z\ob(x\succ y)-(z\prec x)\ob y)\\
			=&0
		\end{align*}
by  Eqs.~\meqref{eq:e4} and ~\meqref{eq:e7}. Thus, Eq.~\meqref{eq:post2} holds, and so $(A,[\ ,\ ],\circ)$ is an extended post-Lie algebra.
\end{proof}

Taking $\lambda=0$ in Proposition~\mref{prop:etap}, we obtain
\begin{coro}
Let $(A,\pr,\su,\ob)$ be a modified tridendriform algebra of weight $\kappa$. Define 
	\begin{align*}
		[x,y]:&=x\ob y-y\ob x, \\
		x\circ y:&= x\succ y -y\prec x,\quad x,y\in A. 
	\end{align*}
	Then $(A,[\ ,\ ],\circ)$ is a modified post-Lie algebra of weight $\kappa$.
	\mlabel{prop:mtmp}
\end{coro}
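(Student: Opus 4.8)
The plan is to obtain this corollary as a direct specialization of Proposition~\mref{prop:etap} at $\lambda=0$, so that essentially no new computation is required. First I would observe that, by Remark~\mref{rk:cases}\meqref{it:rk2}, a modified tridendriform algebra of weight $\kappa$ is by definition nothing but an \et of weight $(0,\kappa)$: the seven relations~\meqref{eq:e1}--\meqref{eq:e7} hold with $\lambda=0$, and in particular $\star_\lambda$ in Eq.~\meqref{eq:lstar} reduces to $\star_0=\pr+\su$. Thus the hypothesis of Proposition~\mref{prop:etap} is met with the parameter $\lambda$ set to $0$.

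Next I would simply invoke Proposition~\mref{prop:etap} for this algebra. Since that proposition applies to an arbitrary \et of weight $(\lambda,\kappa)$, instantiating it with $\lambda=0$ shows that the very operations defined here, namely $[x,y]=x\ob y-y\ob x$ and $x\circ y=x\su y-y\pr x$, assemble into an extended post-Lie algebra of weight $(0,\kappa)$. Concretely: $(A,\ob)$ remains associative, so $[\,,\,]$ is a Lie bracket; Eq.~\meqref{eq:post2} holds verbatim; and Eq.~\meqref{eq:post1} holds with its $\lambda[x,y]\circ z$ summand vanishing. There is nothing to recompute, since the calculation in the proof of Proposition~\mref{prop:etap} is already parametric in $\lambda$ and remains valid at $\lambda=0$.

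Finally I would translate the conclusion back into the modified terminology. By Remark~\mref{rk:epost}\meqref{it:epost2}, an extended post-Lie algebra of weight $(0,\kappa)$ is precisely a modified post-Lie algebra of weight $\kappa$, whose defining compatibility conditions are Eq.~\meqref{eq:mpost1} together with Eq.~\meqref{eq:post2}; these are exactly the relations produced in the previous step. This finishes the proof. Since the argument is a pure specialization, I do not anticipate any genuine obstacle; the only point meriting care is confirming that the two dictionaries line up, that is, that setting $\lambda=0$ in the \et hypothesis indeed lands in the weight-$\kappa$ modified case of the extended post-Lie conclusion, which is immediate from Remarks~\mref{rk:cases} and \mref{rk:epost}.
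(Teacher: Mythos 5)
Your proposal is correct and is exactly the paper's own argument: the paper introduces this corollary with the phrase ``Taking $\lambda=0$ in Proposition~\mref{prop:etap}, we obtain,'' so the entire content is the specialization you describe, with the terminological identifications from Remark~\mref{rk:cases}\meqref{it:rk2} and Remark~\mref{rk:epost}\meqref{it:epost2} left implicit. Your write-up merely makes those dictionaries explicit, which is a faithful (and slightly more careful) rendering of the same proof.
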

Taking $\kappa=0$ in Proposition~\mref{prop:etap}, we obtain
\begin{coro}
	Let $(A,\pr,\su,\ob)$ be a tridendriform algebra of weight $\lambda$. Define 
	\begin{align*}
		[x,y]:&=x\ob y-y\ob x, \\
		x\circ y:&= x\succ y -y\prec x,\quad x,y\in A. 
	\end{align*}
	Then $(A,[\ ,\ ],\circ)$ is a post-Lie algebra of weight $\lambda$. 
	\mlabel{prop:triap}
\end{coro}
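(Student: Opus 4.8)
The plan is to obtain this corollary as an immediate specialization of Proposition~\mref{prop:etap}, with no new computation required. The key preliminary observation is a pair of identifications from the remarks already proved: by Remark~\mref{rk:cases}\meqref{it:rk1}, a tridendriform algebra of weight $\lambda$ is precisely an \et of weight $(\lambda,0)$; and by Remark~\mref{rk:epost}\meqref{it:epost1}, a post-Lie algebra of weight $\lambda$ is precisely an extended post-Lie algebra of weight $(\lambda,0)$. Moreover, the two operations $[x,y]=x\ob y-y\ob x$ and $x\circ y=x\su y-y\pr x$ defined here are verbatim the operations introduced in Proposition~\mref{prop:etap}. Thus the statement is a renaming of the $\kappa=0$ instance of that proposition.

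Concretely, I would first check that the hypothesis of Proposition~\mref{prop:etap} applies: an \et of weight $(\lambda,0)$ is nothing but an \et of weight $(\lambda,\kappa)$ in the case $\kappa=0$, so the proposition's conclusion is available. Invoking it produces an extended post-Lie algebra of weight $(\lambda,0)$ on $(A,[\ ,\ ],\circ)$, which by Remark~\mref{rk:epost}\meqref{it:epost1} is exactly a post-Lie algebra of weight $\lambda$. At the level of identities, setting $\kappa=0$ collapses the compatibility condition~\meqref{eq:post1} to the weight-$\lambda$ post-Lie relation~\meqref{eq:qpost1}, since the term $\kappa[[x,y],z]$ vanishes, while~\meqref{eq:post2} is untouched by $\kappa$ and so carries over without change.

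There is no genuine obstacle here; the only thing to confirm is consistency of the $\kappa=0$ bookkeeping. Namely, in the verification carried out in the proof of Proposition~\mref{prop:etap}, the $\kappa$-dependent contributions enter solely through the terms $\kappa x\ob(y\ob z)$ and $\kappa(x\ob y)\ob z$ in the tridendriform relations~\meqref{eq:e1} and~\meqref{eq:e3}, and through $\kappa[[x,y],z]$ in~\meqref{eq:post1}. When $\kappa=0$ these all disappear simultaneously, so the cancellations established there remain valid a fortiori. The corollary then follows at once.
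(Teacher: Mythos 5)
Your proposal is correct and matches the paper's own proof exactly: the paper derives this corollary by the single phrase ``Taking $\kappa=0$ in Proposition~\mref{prop:etap}, we obtain,'' which is precisely your specialization argument via Remark~\mref{rk:cases}\meqref{it:rk1} and Remark~\mref{rk:epost}\meqref{it:epost1}. Your additional check that the $\kappa$-dependent terms in Eqs.~\meqref{eq:e1}, \meqref{eq:e3} and \meqref{eq:post1} vanish consistently is sound, though the paper leaves it implicit.
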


Generalizing the classical relation between post-Lie algebras and Lie algebras~\cite{Va07}, we obtain

\begin{prop}
Let $(A, \circ,[\,,\,])$ be an extended post-Lie algebra. Define a binary operation on $A$ by 
$$\{x,y\}:=x\circ y -y \circ x +\lambda [x,y],\quad x,y\in A.$$
Then $\{\, ,\,\}$ is a Lie bracket. 
\mlabel{prop:epoll}
\end{prop}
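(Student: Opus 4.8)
The plan is to verify the three defining properties of a Lie bracket for $\{\,,\,\}$: bilinearity, antisymmetry, and the Jacobi identity. Bilinearity is immediate since both $\circ$ and $[\,,\,]$ are bilinear. For antisymmetry, I would simply compute $\{y,x\}=y\circ x-x\circ y+\lambda[y,x]=-(x\circ y-y\circ x)-\lambda[x,y]=-\{x,y\}$, using the antisymmetry of the Lie bracket $[\,,\,]$. Thus the only substantial task is the Jacobi identity, namely that the cyclic sum $\{\{x,y\},z\}+\{\{y,z\},x\}+\{\{z,x\},y\}$ vanishes.

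To attack the Jacobi identity, I would first expand a single term $\{\{x,y\},z\}=\{x,y\}\circ z-z\circ\{x,y\}+\lambda[\{x,y\},z]$ and rewrite each of the three pieces. The key observation is that the piece $\{x,y\}\circ z=(x\circ y)\circ z-(y\circ x)\circ z+\lambda[x,y]\circ z$ is precisely the left-hand side of the compatibility condition~\meqref{eq:post1}, so it equals $x\circ(y\circ z)-y\circ(x\circ z)-\kappa[[x,y],z]$. For the second piece, expanding $z\circ\{x,y\}=z\circ(x\circ y)-z\circ(y\circ x)+\lambda\,z\circ[x,y]$ and applying the derivation-type rule~\meqref{eq:post2} to the term $z\circ[x,y]=[z\circ x,y]+[x,z\circ y]$ removes all remaining $\circ$-into-bracket terms. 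The third piece $\lambda[\{x,y\},z]$ expands by bilinearity into $\lambda[x\circ y,z]-\lambda[y\circ x,z]+\lambda^{2}[[x,y],z]$.

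After this expansion, I would form the full cyclic sum and sort the resulting terms into three families according to their coefficients. First, the purely $\circ$-valued terms of the form $u\circ(v\circ w)$ cancel among themselves in the cyclic sum, exactly as in the classical post-Lie calculation. Second, the $\lambda$-weighted mixed terms, which are brackets of the shape $[u\circ v,w]$, cancel after using the antisymmetry of $[\,,\,]$ to align their entries. Third, and most conceptually, both the $\kappa$-weighted terms coming from~\meqref{eq:post1} and the $\lambda^{2}$-weighted terms coming from $\lambda[\lambda[x,y],z]$ reduce under cyclic summation to a scalar multiple of the Jacobiator $[[x,y],z]+[[y,z],x]+[[z,x],y]$, which vanishes because $[\,,\,]$ is itself a Lie bracket. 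I expect the main obstacle to be purely organizational: keeping track of the roughly two dozen signed terms in the cyclic sum, rather than any conceptual difficulty. The cancellations are forced once the terms are grouped by the powers of $\lambda$ and $\kappa$ and the two compatibility conditions~\meqref{eq:post1} and~\meqref{eq:post2} have been applied.
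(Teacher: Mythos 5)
Your proposal is correct and follows essentially the same route as the paper: antisymmetry by direct computation, then the Jacobi identity by expanding the cyclic sum, applying Eqs.~\eqref{eq:post1} and \eqref{eq:post2}, and observing that the leftover terms are $(-\kappa+\lambda^{2})$ times the Jacobiator of $[\,,\,]$, which vanishes. The only difference is organizational (you apply Eq.~\eqref{eq:post1} to each $\{x,y\}\circ z$ up front and then cancel the resulting $u\circ(v\circ w)$ terms, whereas the paper groups the expanded cyclic sum into bundles matching Eq.~\eqref{eq:post1}), and your claimed cancellations do all check out.
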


\begin{proof}
Let $x,y,z\in A$. First, $\{\, ,\,\}$ is skew-symmetric, since
		\begin{align*}
			\{x,y\}&=x\circ y -y \circ x +\lambda [x,y]
			\\&=-(y\circ x-x\circ y+\lambda [y,x])\quad(\text{by the skew-symmetry of Lie bracket $[,]$})
			\\&=-\{y,x\}.&
		\end{align*}
	
We next verify that $\{\, ,\,\}$ satisfies the Jacobi identity. Indeed, we have
	\begin{align*}
			&\{\{x,y\},z\}+\{\{y,z\},x\}+\{\{z,x\},y\}\\
			=&\big(\{x,y\}\circ z-z\circ \{x,y\}+\lambda [\{x,y\},z]\big)+\big(\{y,z\}\circ x-x\circ \{y,z\}+\lambda [\{y,z\},x]\big)\\
&+\big(\{z,x\}\circ y-y\circ \{z,x\}+\lambda [\{z,x\},y]\big)&\\
=&(x\circ y-y\circ x+\lambda [x,y])\circ z-z\circ (x\circ y-y\circ x+\lambda [x,y])+\lambda [x\circ y-y\circ x+\lambda [x,y],z]
			\\&+(y\circ z-z\circ y+\lambda [y,z])\circ x-x\circ (y\circ z-z\circ y+\lambda [y,z])+\lambda [y\circ z-z\circ y+\lambda [y,z],x]
			\\&+(z\circ x-x\circ z+\lambda [z,x])\circ y-y\circ (z\circ x-x\circ z+\lambda [z,x])+\lambda [z\circ x-x\circ z+\lambda [z,x],y]
			\\=&\bigg((x\circ y-y\circ x+\lambda [x,y])\circ z-x\circ (y\circ z)+y \circ (x\circ z)\bigg)
			\\&+\bigg((y\circ z-z\circ y+\lambda [y,z])\circ x+z\circ (y\circ x)-y\circ (z\circ x)\bigg)
			\\&+\bigg((z\circ x-x\circ z+\lambda [z,x])\circ y-z\circ (x\circ y)+x\circ (z\circ y)\bigg)\\
			&-\lambda\bigg( z\circ [x,y]+[z\circ y,x]-[z\circ x,y]\bigg)
			+\lambda \bigg([x\circ y,z]-x\circ[y,z]-[x\circ z,y]\bigg)\\
			&-\lambda \bigg([y\circ x,z]-[y\circ z,x]+y\circ [z,x]\bigg)
			+\lambda^2\bigg([[x,y],z]+[[y,z],x]+[[z,x],y]\bigg) \\
=&-\kappa \Big([[x,y],z]+ [[y,z],x]+ [[z,x],y]\Big)+\lambda^2\Big([[x,y],z]+[[y,z],x]+[[z,x],y]\Big)\\
&\quad(\text{by  Eqs.~\meqref{eq:post1} and ~\meqref{eq:post2} })\\
=&0\quad(\text{by the Jacobi identity for $[\, ,\,]$}),
\end{align*}
as desired.		
\end{proof}

We now consider extended Rota-Baxter operators on Lie algebras. 
\begin{defn}
Let $\lambda,\kappa\in \bfk$ be fixed.
An {\bf extended Rota-Baxter Lie  algebra of weight $(\lambda,\kappa)$} or simply an {\bf  extended Rota-Baxter Lie algebra} is a  vector space $\frakg$ together with a Lie bracket $[\,,\,]$ and a linear operator $P$ such that
\begin{equation}
[P(x),P(y)]=P([P(x),y]+[x,P(y)]+\lambda[x,y])+\kappa[x,y],\quad x,y\in\frakg.
\mlabel{eq:erbal}
\end{equation}
\end{defn}
Similar to ~\cite[Corollary~5.6]{BGN}, we have
	\begin{theorem}
		Let $\frakg:=(\frakg,[\ ,\ ],P)$ be an extended Rota-Baxter Lie algebra of weight $(\lambda,\kappa)$. Then the binary operation
\begin{equation*}
x\circ y:=[P(x),y],\quad x,y\in \frakg, 
\end{equation*}
makes $(\frakg, [ \ , \ ],\circ)$ into an extended post-Lie algebra of weight $(\lambda,\kappa)$.
		\mlabel{prop:eepost}
	\end{theorem}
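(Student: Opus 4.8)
The plan is to verify the two defining axioms of an extended post-Lie algebra, namely Eqs.~\meqref{eq:post1} and \meqref{eq:post2}, directly from the definition $x\circ y=[P(x),y]$, using only the Jacobi identity for $[\,,\,]$ and the extended Rota-Baxter identity \meqref{eq:erbal}. Since $[\,,\,]$ is assumed to be a Lie bracket, nothing needs to be checked about it, and the whole proof reduces to these two identities.

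The second axiom \meqref{eq:post2} is the easy part and is essentially the Jacobi identity in disguise. Expanding, $x\circ[y,z]=[P(x),[y,z]]$, and the Jacobi identity gives $[P(x),[y,z]]=[[P(x),y],z]+[y,[P(x),z]]=[x\circ y,z]+[y,x\circ z]$, which is exactly \meqref{eq:post2}. No use of the operator identity is needed here.

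For the first axiom \meqref{eq:post1}, I would compute the left-hand side term by term. The first three terms, $(x\circ y)\circ z-(y\circ x)\circ z+\lambda[x,y]\circ z$, are each of the form $[P(-),z]$, so I collect them as $[\,P([P(x),y])-P([P(y),x])+\lambda P([x,y]),\ z\,]$. The key step is to use the skew-symmetry $[P(y),x]=-[x,P(y)]$ to rewrite the argument of $P$ as $[P(x),y]+[x,P(y)]+\lambda[x,y]$, which is precisely the input appearing in the extended Rota-Baxter identity \meqref{eq:erbal}. Applying that identity turns the sum into $[\,[P(x),P(y)]-\kappa[x,y],\ z\,]$. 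Adding the remaining term $\kappa[[x,y],z]$ then cancels the $\kappa$-contribution, leaving $[[P(x),P(y)],z]$, and a final application of the Jacobi identity expands this as $[P(x),[P(y),z]]-[P(y),[P(x),z]]=x\circ(y\circ z)-y\circ(x\circ z)$, matching the right-hand side.

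The computation is largely bookkeeping; the one genuinely load-bearing step --- and the only place where the weight $(\lambda,\kappa)$ enters nontrivially --- is the substitution via \meqref{eq:erbal}, together with the observation that the $\kappa$-term it produces is exactly cancelled by the $\kappa[[x,y],z]$ term built into axiom \meqref{eq:post1}. This cancellation is what forces the resulting post-Lie structure to carry the same weight $(\lambda,\kappa)$ as the operator, so it is the step I would be most careful to track through with the correct signs.
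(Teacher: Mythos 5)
Your proposal is correct and follows essentially the same route as the paper: both axioms are verified directly, with Eq.~\meqref{eq:post2} reduced to the Jacobi identity and Eq.~\meqref{eq:post1} handled by collecting the $[P(-),z]$ terms, invoking skew-symmetry to match the argument of $P$ to the extended Rota-Baxter identity \meqref{eq:erbal}, and finishing with the Jacobi identity. The only cosmetic difference is that the paper absorbs the $\kappa[x,y]$ term into the bracket before applying \meqref{eq:erbal}, whereas you apply \meqref{eq:erbal} first and then cancel the resulting $-\kappa[[x,y],z]$ against the $\kappa[[x,y],z]$ in the axiom; these are the same computation.
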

	\begin{proof}
For all $x,y,z\in\frakg$, we have
\begin{align*}
&(x\circ y)\circ z - (y\circ x)\circ z+\lambda [x, y]\circ z +\kappa [[x,y],z]\\
=&[P(x\circ y),z]-[P(y\circ x),z]+\lambda [P([x,y]),z]+\kappa [[x,y],z]\\
=&\big[P([P(x),y]),z\big]-\big[P([P(y),x]),z\big]+\lambda [P[x,y],z]+\kappa [[x,y],z]\\
=&\big[P([P(x),y]+[x,P(y)]+\lambda[x,y])+\kappa[x,y],z\big]\\
=&\big[[P(x),P(y)],z\big]   \qquad(\text{by Eq.~\meqref{eq:erbal}})\\
=&\big[P(x),[P(y),z]\big]+\big[P(y),[z,P(x)]\big] \qquad(\text{by the Jacobi identity})\\
=&x\circ(y\circ z)-y\circ (x\circ z).
\end{align*}
\begin{align*}
x\circ[y,z]=&[P(x),[y,z]]\\
=&[[P(x),y],z]+[y,[P(x),z]]\qquad(\text{by the Jacobi identity})\\
=&[x\circ y,z]+[y,x\circ z].
\end{align*}
Thus,  Eqs.~\meqref{eq:post1} and ~\meqref{eq:post2} hold.
\end{proof}

Taking the special cases  when $\lambda=0$ or $\kappa=0$ or both, we obtain

\begin{coro} With the notations as above, we have \begin{enumerate}
	\item If $(\frakg, [ \ , \ ], P)$ is a modified Rota-Baxter Lie algebra of weight $\kappa$, then
 $(\frakg, [ \ , \ ],\circ)$ is a modified post-Lie algebra of weight $\kappa$.
\mlabel{it:mp1}
	\item If $(\frakg, [ \ , \ ], P)$ is a Rota-Baxter Lie algebra of weight $\lambda$, 
then  $(\frakg, [ \ , \ ],\circ)$ is a post-Lie algebra of weight $\lambda$.
\mlabel{it:mp2}
\item If $(\frakg, [ \ , \ ], P)$ is a Rota-Baxter Lie algebra of  weight $0$, then $(\frakg, [ \ , \ ],\circ)$ is a post-Lie algebra of weight $0$.
\mlabel{it:mp3}
\end{enumerate}
\mlabel{co:postl}
\end{coro}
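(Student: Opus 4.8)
The plan is to derive all three statements directly from Theorem~\mref{prop:eepost} by specializing the weight pair $(\lambda,\kappa)$, since each named structure appearing in the corollary is, by the conventions fixed in the definitions and in Remark~\mref{rk:epost}, precisely a specialization of its extended counterpart. First I would observe that the defining identity Eq.~\meqref{eq:erbal} of an extended Rota-Baxter Lie algebra of weight $(\lambda,\kappa)$ reduces, upon setting $\kappa=0$, to the Rota-Baxter Lie identity of weight $\lambda$; upon setting $\lambda=0$, to the modified Rota-Baxter Lie identity of weight $\kappa$; and upon setting $\lambda=\kappa=0$, to the Rota-Baxter Lie identity of weight $0$. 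Thus each hypothesis in parts~\meqref{it:mp1}--\meqref{it:mp3} is exactly the statement that $(\frakg,[\,,\,],P)$ is an extended Rota-Baxter Lie algebra for the indicated special weight.

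Next I would apply Theorem~\mref{prop:eepost} verbatim in each case: the same operation $x\circ y:=[P(x),y]$ makes $(\frakg,[\,,\,],\circ)$ into an extended post-Lie algebra of the corresponding weight $(\lambda,\kappa)$. It then remains only to match this output to the named target structures, which is immediate from Remark~\mref{rk:epost}: an extended post-Lie algebra of weight $(0,\kappa)$ is by definition a modified post-Lie algebra of weight $\kappa$ (item~\meqref{it:epost2}), an extended post-Lie algebra of weight $(\lambda,0)$ is a post-Lie algebra of weight $\lambda$ (item~\meqref{it:epost1}), and an extended post-Lie algebra of weight $(0,0)$ is a post-Lie algebra of weight $0$ (item~\meqref{it:epost3}). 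Chaining these two observations yields each of the three assertions in turn.

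I expect no genuine obstacle here, as the corollary is a pure specialization of Theorem~\mref{prop:eepost} and requires no new computation. The only point worth stating carefully is that the compatibility condition Eq.~\meqref{eq:post2} contains no weight parameters and so is unaffected by the specialization, while Eq.~\meqref{eq:post1} loses its $\kappa[[x,y],z]$ term when $\kappa=0$ (giving Eq.~\meqref{eq:qpost1}) and its $\lambda[x,y]\circ z$ term when $\lambda=0$ (giving Eq.~\meqref{eq:mpost1}). Verifying that these reduced relations agree exactly with the defining relations recorded in Remark~\mref{rk:epost} is the entire content of the proof, and it is a routine inspection rather than a substantive argument.
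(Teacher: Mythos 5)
Your proposal is correct and coincides with the paper's own treatment: the paper derives Corollary~\mref{co:postl} exactly by specializing Theorem~\mref{prop:eepost} at $\lambda=0$, $\kappa=0$, or both, with the identification of the resulting structures given by Remark~\mref{rk:epost}. Your additional observation that Eq.~\meqref{eq:post2} is weight-independent while Eq.~\meqref{eq:post1} reduces to Eqs.~\meqref{eq:qpost1} and \meqref{eq:mpost1} is precisely the routine matching the paper leaves implicit.
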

We give an example of extended Rota-Baxter operators and post-Lie algebras on a Lie algebra. 
\begin{exam}
Let $\frakg=\mathrm{sl(2,\mathbb{C})}$ be the $3$-dimensional special linear
 Lie algebra over $\CC$. 
Let
$$
e = \left(\begin{array}{cc} 0&1\\0&0 \end{array}\right), \quad f = \left(\begin{array}{cc} 0&0\\1&0 \end{array}\right),\quad h = \left(\begin{array}{cc} 1&0\\0&-1 \end{array}\right)
$$
be the Cartan-Weyl basis of $\speciall$. Then
\begin{equation}\label{eq:product}
[h,e] = 2e, \quad [h,f] = -2f, \quad [e,f] = h.
\end{equation}
Thus, a linear operator $P:\frakg\to \frakg$ is determined by
\begin{equation*}
\left( 
P(e),\;
P(f),\; 
P(h) \right)  =\left( 
e,\;
f,\;
h \right) \left( \begin{array} {ccc}
r_{11}&r_{12}&r_{13}\\
r_{21}&r_{22}&r_{23}\\
r_{31}&r_{32}&r_{33} \end{array}\right),
\end{equation*}
where $r_{ij}\in \CC, 1\leq i, j\leq 3$. So we  write $P$ often as  its matrix  relative to the given basis. The linear operator
$P$ is an extended Rota-Baxter operator of weight $(\lambda,\kappa)$ on $\frakg$ if the above matrix
$(r_{ij})_{3 \times 3}$ satisfies Eq.~(\ref{eq:erbal}) for $x, y \in\{e, f, h\}$. 
Let
$$P=\begin{pmatrix}
			-2&0&-\dfrac{3}{2}\\
			0&1&-2\\
			1&\dfrac{3}{4}&-\dfrac{1}{2}
\end{pmatrix}.
$$
Then by a direct computation, $P$ is an extended Rota-Baxter operator of weight $(1,1)$ on $\frakg$. Moreover,  by Proposition~\mref{prop:eepost}, there exists an extended post-Lie algebra structure  $(\frakg,[\ ,\ ] ,\circ )$  on $\frakg$, where
the binary product $\circ$ is given by the following Cayley table.

\begin{center}\begin{tabular}{c | c c c }
$\circ$	& e & f & h \\
\hline 	%\cline{1-4}
	$e$ & $2e$ & $-2f-2h$ & $4e$  \\
	$f$ & $\frac{3}{2}e-h$ & $-\frac{3}{2}f$ & $2f$ \\
	$h$ & $-2e+2h$ & $f-\frac{3}{2}h$ & $3e-4f$  \\
\end{tabular}
\end{center}
\end{exam}

\subsection{Extended dendriform algebras and extended pre-Lie algebras}
\mlabel{ss:eprelie}
Inspired by the classical relation between dendriform algebras and pre-Lie algebras, we give the notion of an extended pre-Lie algebra. Then we establish a commutative diagram in the framework of extended dendriform algebras and extended pre-Lie algebras, generalizing the one given in Eq.~\meqref{eq:dcom}. 
\begin{defn}An {\bf extended pre-Lie algebra} is a vector space $A$ with three binary operations $\tril,\trir,\circ:A\ot A\to A$ such that for all $x,y,z\in A$,
\begin{equation}
(x\tril y)\tril z-x\tril(y\ast z)+(x\ast y)\trir z-x\trir(y\trir z)
= (y\tril x)\tril z-y\tril(x\ast z)+(y\ast x)\trir z-y\trir(x\trir z).\mlabel{eq:epre1}
\end{equation}
\begin{equation}
\begin{split}
	&(x\circ y)\circ z-x\circ (y\circ z)+(x\tril y)\tril z-x\tril (y\ast z)+z\trir(y\trir x)-(z\ast y)\trir x\\
=&(y\circ x)\circ z-y\circ (x\circ z)+(y\tril x)\tril z-y\tril (x\ast z)+z\trir(x\trir y)-(z\ast x)\trir y.
\end{split}
\mlabel{eq:epre2}
\end{equation}
Here $\ast:=\tril+\trir$.
\end{defn}
\begin{remark}An extended pre-Lie algebra $(A,\tril,\trir,\circ)$ with $\tril=\trir=0$ is just a pre-Lie algebra.	
	From this perspective, an extended pre-Lie algebra is a generalization of pre-Lie algebras.
\end{remark}

\begin{prop}Let $(A,\prec,\succ)$ be an \edr. Define 
$$x\tril y:=x\prec y,\quad x \trir y:=x\succ y,\quad x\circ y:=x\succ y-y\prec x,\quad x,y\in A.$$
Then $(A,\tril,\trir, \circ)$ is an extended pre-Lie algebra.
\mlabel{prop:edrpre}
\end{prop}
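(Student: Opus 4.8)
The plan is to verify directly the two defining identities \meqref{eq:epre1} and \meqref{eq:epre2} of an extended pre-Lie algebra after the substitution $\tril=\pr$, $\trir=\su$ (so that $\ast=\star$) and $x\circ y=x\su y-y\pr x$. The only inputs needed are the two \edr axioms \meqref{eq:ed1} and \meqref{eq:ed2}. It is convenient to package the content of \meqref{eq:ed1} into a single trilinear expression
\[
F(x,y,z):=(x\pr y)\pr z-x\pr(y\star z)=x\su(y\su z)-(x\star y)\su z,
\]
the two displayed forms being equal \emph{precisely} by \meqref{eq:ed1}; likewise \meqref{eq:ed2} reads $(x\su y)\pr z=x\su(y\pr z)$.

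For \meqref{eq:epre1}, substituting the definitions turns its left-hand side into $(x\pr y)\pr z-x\pr(y\star z)+(x\star y)\su z-x\su(y\su z)$, which is $F(x,y,z)-F(x,y,z)=0$ by the two expressions for $F$. The right-hand side is the same with $x$ and $y$ interchanged and vanishes for the same reason, so \meqref{eq:epre1} holds trivially.

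For \meqref{eq:epre2}, I would expand every occurrence of $\circ$ via $x\circ y=x\su y-y\pr x$ and collect all resulting terms in the operations $\pr,\su$. The key organizing observation is that, after expansion, the four terms in which $z$ occupies the middle position, namely $x\su(z\pr y)$, $(y\su z)\pr x$ and their $x\leftrightarrow y$ images, cancel against one another by \meqref{eq:ed2}, and every surviving term has $z$ as either its rightmost or its leftmost argument. The expression therefore splits into a ``$z$ on the right'' block and a ``$z$ on the left'' block, to be treated separately. In the first block one recognizes the combinations forming $F(x,y,z)$ and $-F(y,x,z)$; replacing each by its alternate ($\su$-dominant) expression from \meqref{eq:ed1} leaves only terms whose outer operation is $\su$, and these cancel in pairs. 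The second block is handled identically using $F(z,y,x)$ and $-F(z,x,y)$, whose replacement via \meqref{eq:ed1} leaves only terms with outer operation $\pr$, which again cancel. Both blocks vanish, giving \meqref{eq:epre2}.

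The main obstacle is purely the bookkeeping: \meqref{eq:epre2} expands into roughly two dozen terms, and the computation only becomes transparent once one (i) peels off the middle-$z$ terms by \meqref{eq:ed2}, and (ii) repackages the remainder into copies of $F$ so that \meqref{eq:ed1} can be used to flip them between their $\pr\pr$ and $\su\su$ forms. With this grouping no further idea is required; in particular the associativity of $\star$ from Proposition~\mref{prop:eda} is not needed, being itself a consequence of \meqref{eq:ed1} and \meqref{eq:ed2}.
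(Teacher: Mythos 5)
Your proposal is correct and takes essentially the same route as the paper: \meqref{eq:epre1} is dispatched immediately from \meqref{eq:ed1}, and \meqref{eq:epre2} is proved by expanding $\circ$, cancelling the four middle-$z$ terms via \meqref{eq:ed2}, and then converting the surviving groups between the $\prec$-outer and $\succ$-outer forms of \meqref{eq:ed1} (your $F$) so that everything cancels. Your packaging through $F$ and the two-block split, versus the paper's rearrangement of the $\circ$-combination onto one side of \meqref{eq:epre2}, is only a cosmetic difference in bookkeeping, not a different method.
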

\begin{proof}
By Eq.~\meqref{eq:ed1},  we obtain Eq.~\meqref{eq:epre1}.	
Let $\ast:=\tril+\trir$. For all $x,y,z\in A$, we have 
\begin{eqnarray*}
&&(x\circ y)\circ z-x\circ (y\circ z)-(y\circ x)\circ z+y\circ (x\circ z)\\
&=&(x\succ y-y\prec x) \succ z-z\prec (x\succ y-y\prec x)-x\succ (y\succ z-z\prec y)+(y\succ z-z\prec y)\prec x\\
&&-(y\succ x-x\prec y) \succ z+z\prec (y\succ x-x\prec y)+y\succ (x\succ z-z\prec x)-(x\succ z-z\prec x)\prec y\\
&=&(x\succ y-y\prec x) \succ z-z\prec (x\succ y-y\prec x)-x\succ (y\succ z)-(z\prec y)\prec x\\
&&-(y\succ x-x\prec y) \succ z+z\prec (y\succ x-x\prec y)+y\succ (x\succ z)+(z\prec x)\prec y\quad(\text{by Eq.~\meqref{eq:ed2}})\\
&=&y\succ (x\succ z)-(y\prec x+y\succ x)\succ z+(x\prec y+x\succ y) \succ z-x\succ (y\succ z)\\
&&+(z\prec x)\prec y-z\prec (x\succ y+x\prec y)+z\prec (y\succ x+y\prec x)-(z\prec y)\prec x\\
&&\quad(\text{by rearranging the order of elements})\\
&=&(y\prec x)\prec z-y\prec(x\ast z)+x\prec(y\ast z)-(x\prec y)\prec z\\
&&+z\succ (x\succ y)-(z\ast x)\succ y+(z\ast y)\succ x- z\succ(y\succ x)\quad(\text{by Eq.~\meqref{eq:ed1}})\\
&=&(y\tril x)\tril z-y\tril(x\ast z)+x\tril(y\ast z)-(x\tril y)\tril z\\
&&+z\trir (x\trir y)-(z\ast x)\trir y+(z\ast y)\trir x- z\trir(y\trir x),
\end{eqnarray*}
proving Eq.~\meqref{eq:epre2}.
\end{proof}

\begin{prop}Let $(A,\tril,\trir,\circ)$ be an extended pre-Lie algebra. Define 
	$$ [x,\,y]:=x\circ y-y\circ x,\quad x,y\in A.$$
Then $(A,[\,,\,])$ is a Lie algebra.
\mlabel{prop:eprlie}
\end{prop}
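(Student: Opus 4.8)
The skew-symmetry $[x,y]=-[y,x]$ is immediate from the definition $[x,y]=x\circ y-y\circ x$, so the only substance is the Jacobi identity. The plan is to reduce the Jacobiator to an expression in the $\circ$-associator $a(u,v,w):=(u\circ v)\circ w-u\circ(v\circ w)$ and then eliminate it using the two defining relations \eqref{eq:epre1} and \eqref{eq:epre2}.

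First I would expand $[[x,y],z]=(x\circ y)\circ z-(y\circ x)\circ z-z\circ(x\circ y)+z\circ(y\circ x)$ and sum over the cyclic permutations of $(x,y,z)$. Regrouping the twelve resulting terms shows that the Jacobiator $J:=[[x,y],z]+[[y,z],x]+[[z,x],y]$ is exactly the signed sum of $a$ over all six permutations of $(x,y,z)$, that is, $J=\sum_{\sigma\in S_3}\mathrm{sgn}(\sigma)\,a^\sigma$, where $a^\sigma$ denotes $a$ evaluated at the corresponding reordering of $(x,y,z)$.

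The key step is then Eq.~\eqref{eq:epre2}, which I read as the statement that the corrected associator $M(x,y,z):=a(x,y,z)+(x\tril y)\tril z-x\tril(y\ast z)+z\trir(y\trir x)-(z\ast y)\trir x$ is invariant under the transposition $x\leftrightarrow y$. Since in characteristic $0$ the total antisymmetrization over $S_3$ annihilates any expression symmetric under a transposition, $M$ contributes nothing to the signed sum, and hence $J=-\sum_{\sigma\in S_3}\mathrm{sgn}(\sigma)\,\mathrm{corr}^\sigma$, where $\mathrm{corr}(x,y,z):=(x\tril y)\tril z-x\tril(y\ast z)+z\trir(y\trir x)-(z\ast y)\trir x$ is the correction carried by $M$.

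It remains to show that the antisymmetrization of $\mathrm{corr}$ vanishes, and this is where Eq.~\eqref{eq:epre1} and the main bookkeeping enter. Splitting $\mathrm{corr}=A+B$ with $A(x,y,z):=(x\tril y)\tril z-x\tril(y\ast z)$ and $B(x,y,z):=z\trir(y\trir x)-(z\ast y)\trir x$, I would use Eq.~\eqref{eq:epre1}—which says that $A(x,y,z)+(x\ast y)\trir z-x\trir(y\trir z)$ is $x\leftrightarrow y$ symmetric—to replace the antisymmetrization of $A$ by minus that of $C(x,y,z):=(x\ast y)\trir z-x\trir(y\trir z)$, reducing everything to the antisymmetrization of $B-C$. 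Finally, expanding $\ast=\tril+\trir$ and comparing monomials, one checks that $B-C$ is invariant under the transposition $x\leftrightarrow z$, so its antisymmetrization also vanishes and $J=0$. I expect this last verification—confirming that $B-C$ is genuinely $x\leftrightarrow z$ symmetric after expanding $\ast$—to be the main obstacle, since it is the only place where the purely-$\trir$ monomials must be paired term by term; everything else is formal manipulation of signed permutation sums.
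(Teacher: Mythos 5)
Your proof is correct, and it reaches the conclusion by a cleaner route than the paper's. Both arguments share the same mathematical core: Eq.~\eqref{eq:epre2} converts the Jacobiator (which, as you verify, equals the full signed antisymmetrization of the $\circ$-associator) into the $\tril,\trir$-correction terms, and Eq.~\eqref{eq:epre1} then disposes of the $\tril$-part. But the paper executes this as a raw expansion: twelve $\circ$-terms turned into twenty-four $\tril,\trir$-terms by Eq.~\eqref{eq:epre2}, followed by repeated applications of Eq.~\eqref{eq:epre1} with term-by-term cancellation. You instead package everything through the single lemma that total antisymmetrization over $S_3$ annihilates any expression symmetric under a transposition, which reduces the whole Jacobi identity to the observation that
\[
B-C \;=\; z\trir(y\trir x)-(z\ast y)\trir x-(x\ast y)\trir z+x\trir(y\trir z)
\]
is symmetric under $x\leftrightarrow z$. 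Two remarks. First, the step you flag as ``the main obstacle'' is in fact immediate: the four terms of $B-C$ are permuted among themselves by $x\leftrightarrow z$ (the first swaps with the fourth, the second with the third), so no expansion of $\ast=\tril+\trir$ into monomials is needed at all. Second, the appeal to characteristic $0$ is unnecessary: if $M(x,y,z)=M(y,x,z)$, the six signed terms of its antisymmetrization cancel in pairs (each $\sigma$ against $\sigma$ precomposed with the transposition), so the lemma holds over any ring. Your organization buys a proof in which every cancellation is conceptual rather than bookkeeping, at the modest cost of first checking that the Jacobiator is $\sum_{\sigma\in S_3}\mathrm{sgn}(\sigma)\,a^\sigma$; the paper's version needs no such identification but is correspondingly heavier to verify line by line.
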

\begin{proof}
Let $\ast:=\tril+\trir$. For all $x,y,z\in A$, we have 
\begin{eqnarray*}
&&[[x,y],z]+[[y,z],x]+[[z,x],y]\\
&=&(x\circ y-y\circ x)\circ z-z\circ(x\circ y-y\circ x)+(y\circ z-z\circ y)\circ x\\
&&-x\circ(y\circ z-z\circ y)+(z\circ x-x\circ z)\circ y-y\circ(z\circ x-x\circ z)\\
&=&\underline{(y\tril x)\tril z}-\underline{y\tril (x\ast z)}+z\trir(x\trir y)-(z\ast x)\trir y\\
&&\underline{-(x\tril y)\tril z}+\underline{x\tril (y\ast z)}-z\trir(y\trir x)+(z\ast y)\trir x\\
&&+(z\tril y)\tril x-z\tril (y\ast x)+\underline{x\trir(y\trir z)}-\underline{(x\ast y)\trir z}\\
&&-(y\tril z)\tril x+y\tril (z\ast x)-x\trir(z\trir y)+(x\ast z)\trir y\\
&&+(x\tril z)\tril y-x\tril (z\ast y)+y\trir(z\trir x)-(y\ast z)\trir x\\
&&-(z\tril x)\tril y+z\tril (x\ast y)-\underline{y\trir(x\trir z)}+\underline{(y\ast x)\trir z}\quad(\text{by Eq.~\meqref{eq:epre2}})\\
&=&z\trir(x\trir y)-(z\ast x)\trir y-{z\trir(y\trir x)}+{(z\ast y)\trir x}\\
&&+{(z\tril y)\tril x}-{z\tril (y\ast x)}-{(y\tril z)\tril x}+{y\tril (z\ast x)}-x\trir(z\trir y)+(x\ast z)\trir y\\
&&+(x\tril z)\tril y-x\tril (z\ast y)+y\trir(z\trir x)-{(y\ast z)\trir x}-{(z\tril x)\tril y}+z\tril (x\ast y)\\
&&(\text{by applying Eq.~\meqref{eq:epre1} to the above underlined terms})\\
&=&z\trir(x\trir y)-(z\ast x)\trir y-x\trir(z\trir y)+(x\ast z)\trir y\\
&&+(x\tril z)\tril y-x\tril (z\ast y)-(z\tril x)\tril y+z\tril (x\ast y)\quad(\text{by  Eq.~\meqref{eq:epre1}})\\
&=&0\quad(\text{by Eq.~\meqref{eq:epre1}}),
\end{eqnarray*}
giving the Jacobi identity. Thus $(A,[\,,\,])$ is a Lie algebra.
\end{proof}

In summary, putting Propositions~\mref{prop:eta}, \mref{prop:eda}, \mref{prop:etap}, \mref{prop:epoll}, \mref{prop:edrpre} and  \mref{prop:eprlie}  together, we obtain 

\begin{theorem} \mlabel{thm:edcom}
These extended structures fit into the following commutative diagram. 
\begin{equation}
{\small \begin{split}
	\xymatrix{\txt{\rm  Extended tridendriform\\ \rm algebra}\ar[d]_{[x,y]:=x\ob y-y\ob x}^{x\circ y:= x\succ y -y\prec x } \ar@/2pc/[rr]^--{\quad x\star y:=x\pr y+x\su y +\lambda x\ob y}\ar[d]&&\txt{\rm\quad Associative\\ \rm algebra}\ar[d]_{[x,y]'=x\star y-y\star x} \ar[d]&& \txt{\rm  Extended dendriform \\ \rm algebra} \ar@/2pc/[ll]_--{ x\star y:=x\su' y+x\pr' y}\ar[d]_{x \tril y:=x\prec' y,\,x\trir y:=x\succ' y}^{x\circ y:=x\su' y-y\pr' x}&\\
		\txt{\rm Extended post-Lie\\ \rm algebra}\ar[rr]_--{ \{x,y\}:=x\circ y -y \circ x +\lambda[x,y]}&&\txt{\rm  Lie \\ \rm algebra} 
		&& \txt{\rm Extended pre-Lie\\ \rm algebra}\ar[ll]^--{\quad\; [x,\,y]:=x\circ y-y\circ x}&
	}
\mlabel{eq:edcom2}
\end{split}
}
\end{equation}
\end{theorem}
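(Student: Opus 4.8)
The plan is to notice that each of the six arrows and four corner functors in the diagram has already been constructed, so the only genuinely new content is the commutativity of the two squares. Indeed, the horizontal maps into the associative layer are supplied by Propositions~\mref{prop:eta} (for $\star_\lambda$ on an \et) and \mref{prop:eda} (for $\star$ on an \edr); the vertical descents to the Lie layer are supplied by Propositions~\mref{prop:etap} and \mref{prop:epoll} on the left and Propositions~\mref{prop:edrpre} and \mref{prop:eprlie} on the right; and the two commutator maps from the associative algebra to the Lie algebra are the standard antisymmetrizations. Hence I would reduce the theorem to checking that, starting from an \et (resp.\ an \edr), the two routes down to the Lie layer produce the same bracket.

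For the left square, let $(A,\pr,\su,\ob)$ be an \et. The route right-then-down gives the commutator of $\star_\lambda$, namely $[x,y]' = x\star_\lambda y - y\star_\lambda x = (x\pr y + x\su y + \lambda x\ob y)-(y\pr x + y\su x + \lambda y\ob x)$. The route down-then-right gives the extended post-Lie bracket $\{x,y\} = x\circ y - y\circ x + \lambda[x,y]$, where $x\circ y = x\su y - y\pr x$ and $[x,y] = x\ob y - y\ob x$. Expanding $\{x,y\}$ term by term, the four $\circ$-contributions reassemble into $x\pr y + x\su y - y\pr x - y\su x$ and the $\lambda$-contribution is $\lambda(x\ob y - y\ob x)$, which together equal $[x,y]'$. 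I would emphasize that this is a purely formal identity of the defining expressions: it requires none of the relations~\meqref{eq:e1}--\meqref{eq:e7}.

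The right square is handled in the same way. For an \edr $(A,\pr,\su)$, the left-then-down route gives the commutator of $\star=\pr+\su$, that is $[x,y]'=(x\pr y + x\su y)-(y\pr x + y\su x)$, while the down-then-left route gives, via Propositions~\mref{prop:edrpre} and \mref{prop:eprlie}, the Lie bracket $[x,y]=x\circ y - y\circ x$ with $x\circ y = x\su y - y\pr x$. Expanding the latter once more yields $(x\pr y + x\su y)-(y\pr x + y\su x)$, matching $[x,y]'$. With both squares commuting and all individual arrows already verified in the cited propositions, the proof is complete.

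These two verifications are elementary, so there is no real obstacle; the only place needing care is the sign bookkeeping in expanding $x\circ y - y\circ x$, since the definition $x\circ y := x\su y - y\pr x$ both mixes $\su$ and $\pr$ and reverses the arguments in the second summand. As a consistency check I would confirm that each square commutes without ever invoking the structure relations of the extended algebras, which is to be expected: commutativity concerns only how the formulas defining the brackets compose, not which identities those brackets go on to satisfy.
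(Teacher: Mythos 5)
Your proposal is correct and takes essentially the same approach as the paper: the paper's proof consists precisely of assembling Propositions~\mref{prop:eta}, \mref{prop:eda}, \mref{prop:etap}, \mref{prop:epoll}, \mref{prop:edrpre} and \mref{prop:eprlie} for the individual arrows, leaving the commutativity of the two squares as an implicit formal check. Your explicit expansion showing that $\{x,y\}=x\circ y-y\circ x+\lambda[x,y]$ equals the $\star_\lambda$-commutator on the left, and that $x\circ y-y\circ x$ equals the $\star$-commutator on the right --- both purely formal identities needing none of the relations \meqref{eq:e1}--\meqref{eq:e7} or \meqref{eq:ed1}--\meqref{eq:ed2} --- correctly supplies exactly the detail the paper omits.
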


\section{The free  \wmrb on an algebra}
\mlabel{sec:FERBA}
In this section,  an explicit construction of  the free extended Rota-Baxter algebra on a nonunitary  algebra is constructed using bracketed words, based on which the free extended Rota-Baxter algebra on a vector space is also obtained. Other than its own interest, this construction provides a key ingredient in the operadic study of extended structures in Section~\mref{sec:wmrbet}.

\subsection{Free operated semigroups and free operated algebras}
For later applications, we recall the construction of free operated semigroups and free operated algebras from~\mcite{Guo09}. 

\begin{defn}
		{\rm An {\bf operated semigroup} (resp. {\bf operated $\bfk$-algebra}) is a semigroup (resp. $\bfk$-algebra) $U$ together with a map (resp. $\bfk$-linear map) $P: U\to U$. A homomorphism from an operated semigroup \, (resp. $\bfk$-algebra) $(U,P_U)$ to an operated semigroup (resp. $\bfk$-algebra) $(V,P_V)$ is a semigroup (resp. $\bfk$-algebra) homomorphism $f :U\to V$ such that $f \circ P_U= P_V \circ f$. } \label{de:mapset}
\end{defn}

For any set $Y$, let $S(Y)$ be the free semigroup on $Y$. Let $\lc Y\rc:=\{\lc y\rc \,|\, y\in Y\}$ denote a replica of $Y$, which is disjoint from $Y$.

Let $X$ be a given set. The construction of  the free operated semigroup on  $X$ can be obtained from a direct system of free semigroups as follows. First let $\fs_0:=S(X)$, and let
$$ \fs_1:=S(X\cup \lc \fs_0\rc).$$
Let $\iota_{0}$ be the natural embedding $\iota_{0}:\fs_0 \hookrightarrow   \fs_1$.
For a given $n\geq 1$, assume that the free semigroups $\fs_i$  for $0 \leq i\leq n$ have been defined, with the properties that for $0 \leq i\leq n-1$ there are the equalities $\fs_{i+1}=S(X\cup \lc\fs_{i}\rc )$ and natural embeddings  $ \iota_i: \fs_i \to \fs_{i+1}$. Then let
\begin{equation}
	\fs_{n+1}:=S(X\cup \lc\fs_{n}\rc ).
	\label{eq:frakm}
\end{equation}
Now the embedding $\iota_{n-1}$ induces an injection from $X\cup \lc\fs_{n-1}\rc$ to $X\cup \lc \fs_{n} \rc$.
The functoriality of free semigroups gives an embedding
\begin{equation}
	\iota_{n}: \fs_{n} = S(X\cup \lc\fs_{n-1}\rc)\hookrightarrow
	S(X\cup \lc \fs_{n}\rc) = \fs_{n+1}. \label{eq:tranm}
\end{equation}
Thus we obtain a  direct system
$\{\,\iota_{n}: \fs_n\to \fs_{n+1}\, \}_{n=0}^\infty.$
Denote
$$\fs(X):=\bigcup_{n\geq 0}\fs_n=\dirlim\fs_n.$$
Note that $\fs(X)$ is a semigroup. An element in $\fs(X)$ is called a {\bf bracketed word}.

Let $\bfk\fs(X)$ be the vector space with basis
$\fs(X)$, equipped with the semigroup algebra structure from the semigroup $\fs(X)$.
Further, the map
$$\lc\ \rc: \fs(X) \to \fs(X), w \mapsto \lc w\rc$$
is extended by $\bfk$-linearity to a linear operator $P_{\lc\ \rc}$ on $\bfk\fs(X)$. Then $(\bfk\fs(X),P_{\lc\ \rc})$ is an operated
$\bfk$-algebra.

\begin{lemma}\cite{Guo09}
	Let $i_X:X \to \fs(X)$ and $j_X: \fs(X) \to \bfk\fs(X)$ be the natural embeddings. Then
	\begin{enumerate}
		\item
		The triple $(\fs(X),\lc\ \rc, i_X)$ is the free operated semigroup on $X$.
		\label{it:mapsetm}
		\item
		The triple $(\bfk\fs(X),P_{\lc\ \rc}, j_X\circ i_X)$ is the free operated $\bfk$-algebra
		on $X$. \label{it:mapalgsg}
	\end{enumerate}
	\label{pp:freetm}
\end{lemma}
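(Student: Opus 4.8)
The plan is to establish the two universal properties directly from the inductive construction of $\fs(X)$ as the direct limit $\bigcup_{n\geq 0}\fs_n$ of free semigroups, feeding in the universal property of ordinary free semigroups at each level. For part~(a), I would fix an arbitrary operated semigroup $(U,P_U)$ together with a map $f\colon X\to U$, and construct the required operated semigroup homomorphism $\bar f\colon \fs(X)\to U$ with $\bar f\circ i_X=f$ by building a compatible family of semigroup homomorphisms $\bar f_n\colon \fs_n\to U$ one level at a time.

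First I would treat the base case $\fs_0=S(X)$: since $S(X)$ is the free semigroup on $X$ and $f$ is a map into the semigroup $U$, the universal property of free semigroups yields a unique semigroup homomorphism $\bar f_0\colon \fs_0\to U$ extending $f$. For the inductive step, assuming $\bar f_n\colon \fs_n\to U$ has been constructed, I would define a map on the generating set $X\cup \lc \fs_n\rc$ of $\fs_{n+1}=S(X\cup\lc\fs_n\rc)$ by sending $x\in X$ to $f(x)$ and $\lc w\rc\in\lc\fs_n\rc$ to $P_U(\bar f_n(w))$; the universal property of the free semigroup then produces a unique semigroup homomorphism $\bar f_{n+1}\colon \fs_{n+1}\to U$. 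The key verification is that $\bar f_{n+1}\circ\iota_n=\bar f_n$, which one checks on the generators of $\fs_n$ and extends by multiplicativity using the inductive hypothesis. This compatibility is exactly what lets the family $\{\bar f_n\}$ glue into a well-defined $\bar f=\dirlim \bar f_n$ on $\fs(X)$, and I expect this bookkeeping---keeping the levels and the embeddings $\iota_n$ aligned---to be the main, though routine, obstacle.

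Once $\bar f$ is defined I would verify the three required properties: it is a semigroup homomorphism since each $\bar f_n$ is and they are compatible; it extends $f$ because $\bar f|_X=\bar f_0|_X=f$; and it commutes with the operators because, for $w\in\fs(X)$ lying in some $\fs_n$, the element $\lc w\rc$ belongs to $\fs_{n+1}$ and $\bar f(\lc w\rc)=\bar f_{n+1}(\lc w\rc)=P_U(\bar f_n(w))=P_U(\bar f(w))$ by construction. For uniqueness I would induct on the level $n$: any operated semigroup homomorphism extending $f$ agrees with $\bar f$ on $X$, and if it agrees on $\fs_n$ then it agrees on the generators $X\cup\lc\fs_n\rc$ of $\fs_{n+1}$ (the $\lc\ \rc$-generators being forced by commutation with $P_U$), hence on all of $\fs_{n+1}$ by multiplicativity.

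Finally, part~(b) follows from part~(a) by linearization. Given an operated $\bfk$-algebra $(A,P_A)$ and a map $f\colon X\to A$, I would regard $A$ as an operated semigroup under its multiplication and apply part~(a) to obtain a unique operated semigroup homomorphism $\fs(X)\to A$ extending $f$; extending this by $\bfk$-linearity gives a $\bfk$-algebra homomorphism $\bfk\fs(X)\to A$ commuting with $P_{\lc\ \rc}$ and $P_A$, since $\fs(X)$ is a $\bfk$-basis of $\bfk\fs(X)$ on which both the multiplication and the operator are determined by the semigroup data. Uniqueness transfers from the semigroup level, as any operated algebra homomorphism is determined by its restriction to the basis $\fs(X)$.
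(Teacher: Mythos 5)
Your proposal is correct: the paper itself gives no proof of this lemma (it is quoted from the cited reference \cite{Guo09}), and your level-by-level induction through the direct system $\fs_n$, followed by linearization for part (b), is exactly the standard argument used there. The one point to keep explicit in a write-up is that the inductive hypothesis must carry the compatibility $\bar f_n\circ\iota_{n-1}=\bar f_{n-1}$ along with the existence of $\bar f_n$ (since verifying $\bar f_{n+1}\circ\iota_n=\bar f_n$ on a generator $\lc w\rc$ requires it), which you implicitly acknowledge as the "bookkeeping" obstacle.
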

\begin{defn}
	Let $Y,Z$ be two subsets of $\fs(X)$. Define the {\bf alternating product} of $Y$ and $Z$ to be
	\begin{equation*}
		\varLambda(Y,Z)=\Big(\bigcup_{r\geq 1}(Y\lf Z\rf)^r\Big)\bigcup\Big(\bigcup_{r\geq 0}(Y\lf Z\rf)^rY\Big)\bigcup\Big(\bigcup_{r\geq 1}(\lf Z\rf Y)^r\Big)\bigcup\Big(\bigcup_{r\geq 0}(\lf Z\rf Y)^r\lf Z\rf\Big).
	\end{equation*}
\end{defn}	
We next construct a sequence $\frakX_n$ of subsets of $\fs(X)$ by the following recursion on $n\geq 0$. For the initial step, we define $\frakX_0:=S(X)$. For the inductive step, we define
$$ \frakX_{n+1}:=\varLambda(S(X),\frakX_n), n\geq 0.$$
Then $\frakX_{0}\subseteq\frakX_{1}$. Assume $\frakX_{n-1}\subseteq\frakX_{n}$. Then $\lf \frakX_{n-1}\rf \subseteq \lf \frakX_n\rf$. By the definition of $\varLambda$, we have
$$\frakX_{n}=\varLambda(X,\frakX_{n-1})\subseteq\varLambda(X,\frakX_{n})=\frakX_{n+1}.$$

Let
\begin{equation} \frakX_{\infty}:=\lim_{\longrightarrow}\frakX_{n}=\bigcup_{n\geq 0}\frakX_{n}.
	\mlabel{eq:xinf}
\end{equation}
Elements of $\frakX_\infty$ are called the {\bf Rota-Baxter bracketed words (RBWs)}, since they form the basis of free Rota-Baxter algebras on a set~\cite{EG1,Gub}.

Recall~\cite[Lemma~3.3]{ZGG16} that every RBW $w$ has a unique {\bf alternating decomposition}:
\begin{equation}\mlabel{eq:alt}
	w=w_1 \cdots w_m,
\end{equation}
where $w_i\in X\cup \lf \frakX_\infty \rf$ for $1\le i\le m$ with $ m\geq 1$, and  no consecutive elements in the sequence
$w_1,\cdots,w_m$ are in $\lf \frakX_\infty \rf$. If $w_1$ is in $X$ (resp. $\lf \frakX_\infty \rf$),  the {\bf head} of $w$, denoted by  $h(w)$, is defined to be $0$ (resp. $1$). Similarly, if  $w_m$ is in  $X$ (resp. $\lf \frakX_\infty \rf$),  the {\bf tail} of $w$, denoted by $t(w)$, is defined to be $0$ (resp. $1$).
The natural number $m$ is called the {\bf breadth} of $w$, denoted by $\bre(w)$, with the convention that $\bre(1)=0$. We define the {\bf depth} of $w$ to be
$$\dep(w):=\min\{n\,|\,w\in \frakX_{n}\}.$$

\subsection{Construction of free  \wmrbs}
\mlabel{subsec:frerba}

The class of \wmrbs of weight $(\lambda,\kappa)$ forms a category with the \wmrb homomorphisms.
We first give the definition of the free \wmrb on a $\bfk$-algebra.
\begin{defn}
	Let $A$ be a $\bfk$-algebra. A {\bf free \wmrb} of weight $(\lambda, \kappa)$ on $A$ is an \wmrb $(F(A),P_A)$ together with an algebra homomorphism $j_A: A\to F(A)$ with the universal property that, for any  \wmrb $(R,P)$ of weight $(\lambda,\kappa)$ and any algebra homomorphism $f: A\to R$, there exists a unique \wmrb homomorphism $\tilde{f}:F(A)\to R$  such that $f=\free{f}\circ j_A$.
\end{defn}	

Fix a linear basis $X$ of $A$ as a vector space on $\bfk$. 
Let
$$\ce:=\bfk \frakX_{\infty}$$
be the vector space with basis $\frakX_\infty$ defined in Eq.~\meqref{eq:xinf}.
We now define a product $\dg$ on $\ce$.  It suffices to define $u\dg v$ for all basis elements $u,v\in\frakX_\infty$.  We proceed by induction on the sum $n:=\dep(u)+\dep(v)\geq 0$ of the depths of $u$ and $v$.  For $n=0$, we have $\dep(u)=\dep(v)=0$, and so $u,v$ are in $\frakX_{0} ~(=S(X))$. Then we define $u\dg v=uv$, the concatenation.
For a given $k\geq 0$, assume that $u \dg v$ has been defined for all $u,v\in \frakX_{\infty}$ with $0\le n\le k$. Consider $u, v\in \frakX_\infty$ with $d(u)+d(v)=k+1$. There are two cases, depending on whether or not $\bre(u)=\bre(v)=1$. 

\noindent 
{\bf Case 1. } 
If $\bre(u)=\bre(v)=1$, that is, $u$ and $v$ are in $X$ or in $\lf \frakX_{\infty} \rf$, then
\begin{equation}\mlabel{eq:de}
	u \dg v = \left \{\begin{array}{ll}
		\lf\bar{u} \dg \lf\bar{v}\rf\rf+\lf \lf\bar{u}\rf\dg \bar{v}\rf+\lambda\lf\bar{u} \dg \bar{v}\rf+\kappa\bar{u} \dg \bar{v},
		& \text{if} \ u=\lf\bar{u}\rf \  \text{and} \ v=\lf\bar{v}\rf,\\
		u v, & \text{otherwise}.
	\end{array} \right .
\end{equation}
Here the product in the first line is well defined by the induction hypothesis, since
$$\dep(\bar{u})+\dep(\lf\bar{v}\rf)=\dep(\lf\bar{u}\rf)+\dep(\bar{v})=k\quad\text{and}\quad \dep(\bar{u})+\dep(\bar{v})=k-1.$$
The product in the second line is the concatenation.

\noindent
{\bf Case 2. } 
If  $\bre(u)>1$ or $\bre(v)>1$, then by Eq.~\meqref{eq:alt},  we write
$u=u_1\cdots u_m$ and $v=v_1\cdots v_{n}$.  Define
\begin{equation}\mlabel{eq:dd}
	u \dg v=u_1\cdots u_{m-1}(u_m\dg v_1)v_2\cdots v_{n},
\end{equation}
where $w_m \dg v_1$ has been  defined by Eq.~(\mref{eq:de}) and the remaining products are the concatenation. Extending $\dg$ by bilinearity, we obtain  a binary operation $\dg$ on $\ce$.
For all $u \in \frakX_{\infty}$,  we have $\lf u\rf\in \frakX_\infty$. Thus, the restriction of  $P_{\lf\ \rf}$ on $\bfk\fs(X)$ to $\ce$ gives a linear operator
$$\pe:\ce\to\ce,\   w\mapsto\lf w\rf.$$
In the following, we will use the  notation $\lf\,\rf$ interchangeably with  $\pe$.
Let
$$j_A: A\hookrightarrow \ce $$
to be the natural injection.

The main result of this section is given as follows, and its proof will be presented in the next subsection.

\begin{theorem}
Let $A$ be an algebra. 
\begin{enumerate}
\item
The pair $(\ce,\dg)$ is an algebra.
\mlabel{it:alg}
\item
The triple $(\ce,\dg,\pe)$ is an \wmrb of weight $(\lambda,\kappa)$.
\mlabel{it:erb}
\item
The quadruple $(\ce,\dg,\pe,j_A)$ is the free \wmrb of weight $(\lambda,\kappa)$ on $A$.
\mlabel{it:ferb}
\end{enumerate}
\mlabel{thm:free}
\end{theorem}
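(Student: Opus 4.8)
The three parts build on each other, so the plan is to establish them in the given order, with the bulk of the work concentrated in part~\meqref{it:alg} (associativity of $\dg$) and part~\meqref{it:ferb} (the universal property). For part~\meqref{it:alg}, the goal is to prove $(u\dg v)\dg w = u\dg(v\dg w)$ for all basis elements $u,v,w\in\frakX_\infty$. I would proceed by induction on the total depth $n:=\dep(u)+\dep(v)+\dep(w)$, mirroring the inductive definition of $\dg$ in Eqs.~\meqref{eq:de} and \meqref{eq:dd}. The base case $n=0$ reduces to associativity of concatenation in $S(X)$. For the inductive step I would first dispose of the case where some factor has breadth $>1$: here the definition~\meqref{eq:dd} localizes the product to the adjacent boundary factors, and associativity follows by reducing to strictly smaller instances and invoking the induction hypothesis together with associativity of concatenation. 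The genuinely substantive case is when $u,v,w$ all have breadth $1$ and all lie in $\lf\frakX_\infty\rf$, say $u=\lf\bar u\rf$, $v=\lf\bar v\rf$, $w=\lf\bar w\rf$.

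\textbf{The main obstacle.}
The hard part will be precisely this all-bracketed case, where both $(u\dg v)\dg w$ and $u\dg(v\dg w)$ expand, via~\meqref{eq:de}, into sums of four bracketed terms each, and each of those must be re-expanded. After one application of~\meqref{eq:de} to $u\dg v$ one obtains a breadth-$>1$ term $\kappa\,\bar u\dg\bar v$ alongside three bracketed terms; multiplying on the right by $w=\lf\bar w\rf$ then forces a second round of expansion. The plan is to expand both sides fully, match them term by term, and reconcile the discrepancy using the induction hypothesis — the associativity of $\dg$ on the strictly-smaller arguments $\bar u,\bar v,\bar w,\lf\bar u\rf,\lf\bar v\rf,\lf\bar w\rf$. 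The bookkeeping is exactly the combinatorial shadow of the extended Rota-Baxter identity~\meqref{eq:grb}: the weight-$\lambda$ and weight-$\kappa$ terms generated on the two sides must cancel in the same pattern that makes~\meqref{eq:grb} self-consistent. I expect the weight-$\kappa$ terms (which drop the bracket and hence lower depth) to be the delicate ones, since they interact with the breadth-$>1$ branch of the definition.

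\textbf{Parts \meqref{it:erb} and \meqref{it:ferb}.}
Granting part~\meqref{it:alg}, part~\meqref{it:erb} is comparatively direct: I would verify the extended Rota-Baxter identity~\meqref{eq:grb} for $\pe=\lf\ \rf$ on basis elements $u,v\in\frakX_\infty$. When $u,v$ both have breadth $1$, this is \emph{literally} the first line of the defining Eq.~\meqref{eq:de} rewritten as $\lf u\rf\dg\lf v\rf = \lf u\dg\lf v\rf\rf+\lf\lf u\rf\dg v\rf+\lambda\lf u\dg v\rf+\kappa\, u\dg v$, so the identity holds by construction; the general breadth case follows by expanding via~\meqref{eq:dd} and reducing to the breadth-$1$ case. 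Finally, for part~\meqref{it:ferb} I would establish the universal property: given any \wmrb $(R,P)$ of weight $(\lambda,\kappa)$ and an algebra homomorphism $f\colon A\to R$, I define $\free f\colon\ce\to R$ on basis elements by induction on depth — sending $X$ via $f$, and setting $\free f(\lf w\rf):=P(\free f(w))$. The tasks are then to check that $\free f$ is an algebra homomorphism (by induction on total depth, again splitting into the breadth and bracket cases, and using that $P$ satisfies~\meqref{eq:grb} in $R$ to handle the all-bracketed case), that it commutes with the operators, that $f=\free f\circ j_A$, and that it is the unique such map (forced because $\frakX_\infty$ is generated from $X$ under multiplication and $\lf\ \rf$). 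I expect the homomorphism check for $\free f$ to reuse the same case analysis as part~\meqref{it:alg}, so the real conceptual cost is front-loaded into proving associativity.
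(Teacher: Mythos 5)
Your proposal is correct and follows essentially the same route as the paper's proof: induction on total depth for associativity with reduction (via the breadth-$>1$ localization and head/tail analysis) to the all-bracketed breadth-one case matched term by term, part (b) read off directly from Eq.~\meqref{eq:de}, and part (c) by defining the lift $\tilde f$ inductively on depth via the alternating decomposition, then verifying the homomorphism property with the extended Rota-Baxter identity in $R$ and uniqueness by generation. The only cosmetic difference is that the paper packages the breadth reduction into two preparatory lemmas (Lemmas~\mref{lem:pro} and \mref{lem:ass}), which you fold into the induction itself.
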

The following corollary will be used in Section~\mref{sec:wmrbet}. 

\begin{coro}Let $M$ be a vector space over $\bfk$. Let 
$$T^+(M):=\bigoplus_{n\geq 1} M^{\ot n}$$
be the reduced tensor algebra on $M$. Then $F(T^+(M))$, together with the natural injection
$$J_M:M\hookrightarrow T^+(M)\hookrightarrow \sha^{\rm NC}_\frake(T^+(M)),$$
is the free \wmrb of weight $(\lambda,\kappa)$ on $M$.
\mlabel{coro:freerbm}
\end{coro}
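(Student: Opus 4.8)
The plan is to exhibit $F(T^+(M))$ as the free \wmrb on the vector space $M$ by composing two universal properties: that of the reduced tensor algebra $T^+(M)$ as the free nonunitary associative $\bfk$-algebra on $M$, and that of $F(-)$ as the free \wmrb on an algebra (Theorem~\mref{thm:free}). Write $\iota_M: M\hookrightarrow T^+(M)$ for the degree-one inclusion and $j_{T^+(M)}: T^+(M)\to F(T^+(M))$ for the structure homomorphism supplied by Theorem~\mref{thm:free}, so that $J_M=j_{T^+(M)}\circ\iota_M$. What must be checked is the universal property: for every \wmrb $(R,P)$ of weight $(\lambda,\kappa)$ and every $\bfk$-linear map $\phi: M\to R$, there is a unique \wmrb homomorphism $\Phi: F(T^+(M))\to R$ with $\Phi\circ J_M=\phi$.

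For existence, I would first invoke the universal property of $T^+(M)$ as the free nonunitary algebra on $M$: the linear map $\phi$ extends uniquely to an algebra homomorphism $\bar\phi: T^+(M)\to R$ with $\bar\phi\circ\iota_M=\phi$. Then, regarding $(R,P)$ as a target \wmrb and $\bar\phi$ as an algebra homomorphism out of $T^+(M)$, Theorem~\mref{thm:free}\meqref{it:ferb} yields a unique \wmrb homomorphism $\Phi: F(T^+(M))\to R$ with $\Phi\circ j_{T^+(M)}=\bar\phi$. Composing gives $\Phi\circ J_M=\Phi\circ j_{T^+(M)}\circ\iota_M=\bar\phi\circ\iota_M=\phi$, as required.

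For uniqueness, suppose $\Psi: F(T^+(M))\to R$ is any \wmrb homomorphism with $\Psi\circ J_M=\phi$. The composite $\Psi\circ j_{T^+(M)}: T^+(M)\to R$ is an algebra homomorphism (since $j_{T^+(M)}$ is an algebra homomorphism into $F(T^+(M))$ by Theorem~\mref{thm:free} and any \wmrb homomorphism restricts to one), and it satisfies $(\Psi\circ j_{T^+(M)})\circ\iota_M=\Psi\circ J_M=\phi$. By the uniqueness clause in the universal property of $T^+(M)$, this forces $\Psi\circ j_{T^+(M)}=\bar\phi$. Thus $\Psi$ is a \wmrb homomorphism whose restriction along $j_{T^+(M)}$ agrees with $\bar\phi$, and the uniqueness clause in Theorem~\mref{thm:free}\meqref{it:ferb} then gives $\Psi=\Phi$.

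The argument is a standard ``left adjoints compose'' computation, so no step involves a hard estimate; the only points demanding care are bookkeeping ones. I expect the main (minor) obstacle to be confirming the two facts that the composite argument silently relies on: that $T^+(M)$ is genuinely the free object in the category of nonunitary associative $\bfk$-algebras on the vector space $M$ (so that $\bar\phi$ exists and is unique), and that $j_{T^+(M)}$ is an algebra homomorphism while every \wmrb homomorphism restricts to an algebra homomorphism (so that $\Psi\circ j_{T^+(M)}$ really lands in the category where the universal property of $T^+(M)$ applies). Both are immediate from the definitions and from Theorem~\mref{thm:free}, but they are the hinges on which the composition of the two universal properties turns.
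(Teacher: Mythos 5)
Your proof is correct and takes essentially the same approach as the paper: the paper's proof simply invokes the principle that a composite of left adjoints (vector spaces $\to$ algebras via $T^+$, then algebras $\to$ extended Rota-Baxter algebras via Theorem~\ref{thm:free}) is left adjoint to the composite forgetful functor, and your argument is exactly this adjunction composition unwound into an explicit existence-and-uniqueness check. No gap.
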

\begin{proof}
The reduced tensor algebra on $M$ is the left adjoint of the forgetful functor from the category of algebras to that of vector spaces. By Theorem~\mref{thm:free}, the free \wmrb on $T^+(M)$ is the left adjoint of the forgetful functor the category of \wmrbs to that of algebras.  Since the composition of two
left adjoint functors is the left adjoint functor of the composition of the above two forgetful functors,   $\sha^{\rm NC}_\frake(T^+(M))$ is the free \wmrb on the vector space $M$.
\end{proof}
\subsection{The proof of Theorem~\mref{thm:free}}
We first list two easily verified properties for later applications. 
\begin{lemma}
Let $u,v,w\in \frakX_{\infty}$. Then
\begin{enumerate}
\item \mlabel{it:ht}
$h(u)=h(u \dg v)$ and $t(v)=t(u \dg v)$.
\item \mlabel{it:concat}
If $t(u)\ne h(v)$, then $u \dg v=u v$.
\item \mlabel{it:twoasso}
If $t(u)\ne h(v)$, then for all $w\in \frakX_{\infty}$,
\begin{equation}\mlabel{eq:twoa}
(uv)\dg w=u(v \dg w)\ , \quad w\dg (u v)=(w\dg u)v.
\end{equation}
\end{enumerate}
\mlabel{lem:pro}
\end{lemma}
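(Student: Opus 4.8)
The plan is to prove the three parts simultaneously by induction on the depth sum $\dep(u)+\dep(v)$, following exactly the case division used to define $\dg$ in Eqs.~\meqref{eq:de} and \meqref{eq:dd}. At the base level $\dep(u)=\dep(v)=0$ all arguments lie in $\frakX_0=S(X)$, every product is plain concatenation, and each assertion is immediate. For the inductive step I would settle parts~(\mref{it:ht}) and (\mref{it:concat}) first, and then bootstrap part~(\mref{it:twoasso}) from them.

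For part~(\mref{it:ht}) the guiding principle is that the first letter of $u$ and the last letter of $v$ persist into $u\dg v$, so that $h$ and $t$ can be read off from them. In Case~2 this is transparent from Eq.~\meqref{eq:dd}: the product begins with $u_1$ and ends with $v_n$, and $h(u_1)=h(u)$, $t(v_n)=t(v)$. In Case~1, if at least one of $u,v$ lies in $X$ the product is the concatenation $uv$ and the claim is clear; in the remaining fully bracketed case $u=\lf\bar u\rf$, $v=\lf\bar v\rf$, the three bracketed summands of Eq.~\meqref{eq:de} each carry head and tail $1=h(u)=t(v)$. The genuinely delicate point, which I would write out most carefully, is the unbracketed correction term $\kappa\,\bar u\dg\bar v$: its boundary is governed by $\bar u$ and $\bar v$ rather than by the outer brackets, so one must record precisely which boundary data actually propagate, as this is exactly the information needed to keep the later concatenations in part~(\mref{it:twoasso}) clean.

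Part~(\mref{it:concat}) is where the hypothesis $t(u)\neq h(v)$ does the work: it says that at the $u$--$v$ interface we never meet two adjacent brackets, which is exactly the configuration triggering the bracket-merging branch of Eq.~\meqref{eq:de}. In Case~2 the interface product is $u_m\dg v_1$ with $u_m,v_1$ not both bracketed, so Eq.~\meqref{eq:de} collapses it to $u_mv_1$ and hence $u\dg v=u_1\cdots u_mv_1\cdots v_n=uv$; in Case~1 (both of breadth one) the same dichotomy forces the ``otherwise'' line. No $\kappa$-term ever intervenes, so this part is clean.

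Finally, for part~(\mref{it:twoasso}) I would combine the first two parts. Since $t(u)\neq h(v)$, part~(\mref{it:concat}) makes $uv$ a genuine RBW with alternating decomposition $u_1\cdots u_mv_1\cdots v_n$ (cf.\ Eq.~\meqref{eq:alt}); as $\bre(uv)=\bre(u)+\bre(v)\geq 2$, computing $(uv)\dg w$ by Eq.~\meqref{eq:dd} shows the product only reshapes the $v$--$w$ junction while leaving the prefix $u_1\cdots u_m$ untouched, whence $(uv)\dg w=u\cdot(v\dg w)$; the mirror computation gives $w\dg(uv)=(w\dg u)\,v$. The point demanding care is that this is a genuine identity inside $\ce=\bfk\frakX_\infty$, namely that prepending $u$ to $v\dg w$ produces only RBWs and that concatenation really distributes over the recursively defined $\dg$. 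Here part~(\mref{it:ht}) supplies $h(v\dg w)=h(v)$, so that $t(u)\neq h(v)$ forbids the creation of adjacent brackets; and I would note that the instances of part~(\mref{it:ht}) actually used are precisely those in which the relevant boundary letter lies in $X$, for which the $\kappa$-correction term causes no trouble, whereas if $h(v)=1$ the hypothesis forces $t(u)=0$ and left-concatenation by $u$ is automatically clean. I expect this bookkeeping --- certifying that the outer concatenation never leaves $\frakX_\infty$ --- to be the real obstacle, rather than any single computation.
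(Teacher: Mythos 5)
Your plan is essentially the direct verification that the paper omits (it lists Lemma~\mref{lem:pro} as ``easily verified'' and gives no proof), and parts~(\mref{it:concat}) and (\mref{it:twoasso}) are handled correctly: under $t(u)\ne h(v)$ the junction between $u_m$ and $v_1$ is never bracket--bracket, so the ``otherwise'' branch of Eq.~\meqref{eq:de} makes $u\dg v$ the plain concatenation $uv$; and applying Eq.~\meqref{eq:dd} to $uv$, whose alternating decomposition is $u_1\cdots u_m v_1\cdots v_n$, only reshapes the junction with $w$, which yields both identities of Eq.~\meqref{eq:twoa}. No induction is really needed for any of this; your inductive framing is harmless overkill.

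Your caution about the $\kappa$-term in part~(\mref{it:ht}) is not just warranted --- it is decisive, and you should say outright what your hedging implies. Under the only available reading (every monomial of $u\dg v$ has head $h(u)$ and tail $t(v)$; recall $h,t$ are defined only on elements of $\frakX_\infty$), part~(\mref{it:ht}) is \emph{false} when $\kappa\ne 0$: for $x,y\in X$,
\[
\lf x\rf \dg \lf y\rf \;=\; \lf x\lf y\rf\rf+\lf\lf x\rf y\rf+\lambda\lf xy\rf+\kappa\, xy,
\]
and the monomial $xy$ has head and tail $0$, whereas $h(\lf x\rf)=t(\lf y\rf)=1$. (When $\kappa=0$ all summands are bracketed and one recovers the classical lemma for weight-$\lambda$ Rota-Baxter words, which is evidently what the paper carried over.) So no amount of ``recording which boundary data propagate'' will prove (a) as stated; it must be weakened. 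The weakening you implicitly invoke in part~(c) is the correct one, and it is all that is ever used: if $h(u)=0$ (equivalently $u_1\in X$) then every monomial of $u\dg v$ begins with $u_1$ and hence has head $0$, and dually if $t(v)=0$ then every monomial of $u\dg v$ has tail $0$; this is immediate from Eqs.~\meqref{eq:de}--\meqref{eq:dd}, since the $\kappa$-summand can only arise at a bracket--bracket junction. With part~(a) restated in this one-sided form, your argument for part~(c) --- when $h(v)=1$ the hypothesis forces $t(u)=0$ and left-concatenation is automatically legal, and when $h(v)=0$ the restricted (a) applies to every monomial of $v\dg w$ --- is complete, and it is exactly the statement needed later in the associativity proof of Theorem~\mref{thm:free}, where one passes from $u(v\dg w)$ to $u\dg(v\dg w)$.
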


\begin{lemma}Let $u,v,w\in\frakX_\infty$.
	If the associative law
	$$(u \dg v)\dg w=u \dg (v \dg w)$$
	holds for the case of $\bre(u)=\bre(v)=\bre(w)=1$, then it holds for all $u,v,w\in \frakX_{\infty}$.
	\mlabel{lem:ass}
\end{lemma}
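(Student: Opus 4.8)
The plan is to prove associativity by induction on the total breadth $N:=\bre(u)+\bre(v)+\bre(w)$, using the assumed breadth-one case as the base of the induction and peeling off the outermost factors of $u$ and of $w$ in the inductive step. The workhorse is a pair of \emph{factoring properties} read off directly from the recursive definition in Eq.~\meqref{eq:dd}: if $\bre(u)\ge 2$ and $u=u_1u'$ is the splitting of its alternating decomposition \meqref{eq:alt} into its first letter-or-bracket $u_1$ and the remaining word $u'$, then $u\dg t=u_1(u'\dg t)$ for every $t\in\frakX_\infty$; dually, if the right factor $t$ has $\bre(t)\ge 2$ with $t=t't_k$, then $s\dg t=(s\dg t')t_k$ for every $s$. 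Both identities follow by applying Eq.~\meqref{eq:dd} to each side and invoking the associativity of concatenation in $\fs(X)$ (checking separately the boundary subcases where the remaining word has breadth one); they refine Lemma~\mref{lem:pro}\meqref{it:twoasso}, which already covers those junctions where the two adjacent types differ. I would first record these factoring properties, together with the head/tail bookkeeping of Lemma~\mref{lem:pro}\meqref{it:ht}.

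For the induction, the base case $N=3$, namely $\bre(u)=\bre(v)=\bre(w)=1$, is exactly the hypothesis of the lemma. For $N\ge 4$ I would split according to where the excess breadth lies. If $\bre(u)\ge 2$, write $u=u_1u'$; applying left factoring gives $(u\dg v)\dg w=u_1\big((u'\dg v)\dg w\big)$ and $u\dg(v\dg w)=u_1\big(u'\dg(v\dg w)\big)$, so the two sides agree by the inductive hypothesis applied to the triple $(u',v,w)$, which has strictly smaller total breadth. The case $\bre(u)=1$, $\bre(w)\ge 2$ is symmetric: writing $w=w'w_k$ and using right factoring reduces both sides to the inductive hypothesis on $(u,v,w')$.

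The remaining case, $\bre(u)=\bre(w)=1$ with $\bre(v)\ge 2$, is where the induction does not shrink $v$, and I would instead dispatch it by direct computation. Writing $v=v_1\cdots v_n$ with $n\ge 2$, Eq.~\meqref{eq:dd} gives $u\dg v=(u\dg v_1)v_2\cdots v_n$ and $v\dg w=v_1\cdots v_{n-1}(v_n\dg w)$; substituting these back into Eq.~\meqref{eq:dd} once more shows that both $(u\dg v)\dg w$ and $u\dg(v\dg w)$ equal $(u\dg v_1)\,v_2\cdots v_{n-1}\,(v_n\dg w)$. The crux here is that the only genuine interactions, the bracket-level products $u\dg v_1$ and $v_n\dg w$, sit at opposite ends of $v$ and are separated by the plainly concatenated block $v_2\cdots v_{n-1}$; hence they cannot interfere, and no appeal to the breadth-one identity is needed in this case.

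I expect the main obstacle to be organizational rather than computational: the factoring properties must be set up so that they apply uniformly whether the split-off factor $u_1$ (or $t_k$) is a letter of $X$ or a bracket $\lf\ \rf$, and whether its neighbour is of the same or the opposite type, since the alternating decomposition forbids only two consecutive brackets while permitting consecutive letters; this is precisely why Lemma~\mref{lem:pro}\meqref{it:twoasso} alone does not suffice and the boundary subcases of Eq.~\meqref{eq:dd} must be verified by hand. Once the factoring properties and the head/tail bookkeeping are in place, every case collapses either to a triple of strictly smaller total breadth or, in the last case, to a single juxtaposition identity, thereby confining all of the substantive associativity, the part that actually activates the bracket-combining Eq.~\meqref{eq:de} with its $\lambda$- and $\kappa$-terms, entirely to the assumed breadth-one case.
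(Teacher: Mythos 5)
Your proposal is correct, but there is no proof in the paper to compare it against: the paper states Lemma~\mref{lem:ass} bare, between Lemma~\mref{lem:pro} and the proof of Theorem~\mref{thm:free}, and only \emph{uses} it (in the proof of Theorem~\mref{thm:free}(\mref{it:alg})) to reduce associativity to the breadth-one case. So your argument supplies an omitted proof rather than paralleling an existing one. The structure is sound: the induction on $N=\bre(u)+\bre(v)+\bre(w)$ with base $N=3$ given by the hypothesis; the left/right factoring identities $u\dg t=u_1(u'\dg t)$ and $s\dg t=(s\dg t')t_k$, which do follow directly from Eq.~\meqref{eq:dd} and genuinely extend Lemma~\mref{lem:pro}(\mref{it:twoasso}) to the letter--letter junctions it does not cover; and the exhaustive three-way case split, where excess breadth in $u$ or $w$ is removed by factoring plus the inductive hypothesis, and the remaining case $\bre(u)=\bre(w)=1$, $\bre(v)\geq 2$ is settled by the direct computation showing both sides equal $(u\dg v_1)v_2\cdots v_{n-1}(v_n\dg w)$ --- correctly so, since the two junctions sit at opposite ends of $v$ and never interact.

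One point needs to be made explicit rather than delegated to Lemma~\mref{lem:pro}(\mref{it:ht}), because that statement, read termwise on linear combinations, is actually false when $\kappa\neq 0$: if $a=\lf\bar a\rf$ and $b=\lf\bar b\rf$, the expansion \meqref{eq:de} of $a\dg b$ contains the term $\kappa\,\bar a\dg\bar b$, whose head and tail need not be $1$ (e.g.\ $\lf x\rf\dg\lf y\rf$ contains $\kappa\, xy$). This matters exactly where you apply factoring to a linear combination, e.g.\ in passing from $(u_1(u'\dg v))\dg w$ to $u_1\big((u'\dg v)\dg w\big)$: one must know that each term $z$ of $u'\dg v$ yields a legitimate RBW $u_1z$ whose alternating decomposition begins with $u_1$. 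The rescue is the alternating structure itself: if $u_1$ is a bracket then the first factor of $u'$ is a letter, and one checks that in that situation every term of $u'\dg v$ begins with a letter (the problematic mixed-head expansion of Eq.~\meqref{eq:de} can only arise at a junction whose outer neighbour is a letter, where no constraint is violated); if $u_1$ is a letter there is nothing to check. Your final paragraph anticipates precisely this phenomenon, so the issue is one of explicitness, not substance: when writing the proof out, make this termwise head/tail verification the content of your factoring step, instead of citing Lemma~\mref{lem:pro}(\mref{it:ht}) as stated.
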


%%%%%%%%%%%%%%%%%
We now give the proof of Theorem~\mref{thm:free}.
\begin{proof}[Proof of Theorem~\mref{thm:free}]
	(\mref{it:alg})
	To prove the associativity of $\dg$, it suffices to prove that
	\begin{equation}
		(u\dg v)\dg w=u\dg (v\dg w),\quad  u,v,w\in\frakX_\infty.
		\mlabel{eq:ass}
	\end{equation}
	The proof proceeds by induction on the sum $n:=\dep(u)+\dep(v)+\dep(w)\geq 0$ with $u,v,w\in\frakX_\infty$.
	For $n=0$, we have $\dep(u)=\dep(v)=\dep(w)=0$, and so $u, v, w\in \frakX_0$. By the definition of $\dg$, we have
	$$(u \dg v)\dg w=uvw=u \dg (v \dg w).$$
	For a given $k\geq 0$, assume that Eq.~(\mref{eq:ass}) holds for $0\le n\le k$, and consider the case of $n=k+1\geq 1$.
	If $t(u)\ne h(v)$, then by Eq.~\meqref{eq:twoa},
	$$(u\dg v)\dg w=(uv)\dg w=u(v\dg w)=u \dg (v\dg w).$$
	Similarly, if $t(v)\ne h(w)$, then by Eq.~\meqref{eq:twoa} again,
	$$(u \dg v)\dg w=(u \dg v)w=u\dg (vw)=u\dg (v\dg w).$$
	It remains to verify Eq.~(\mref{eq:ass}) when $t(u)=h(v)$ and $t(v)=h(w)$.
	By Lemma~\mref{lem:ass}, we only need to verify Eq.~\meqref{eq:ass} under the conditions that $\bre(u)=\bre(v)=\bre(w)=1$, $t(u)=h(v)$ and $ t(v)=h(w)$. Now suppose that  $\bre(u)=\bre(v)=\bre(w)=1$.  Thus $u,v,w$ are in $X\cup \lf \frakX_\infty\rf$. If one of $u,v,w$ is in $X$, then from $t(u)=h(v)$ and $t(v)=h(w)$ we conclude that $u,v,w$ are all in $X$, in which case Eq.~\meqref{eq:ass} is just the associativity of $A$. 
	
In the remaining case when $u,v,w$ are in $\lf \frakX_\infty \rf$, we let $u:=\lf\bar{u}\rf, v:=\lf\bar{v}\rf$, and $w:=\lf\bar{w}\rf$ for $\bar{u}, \bar{v}, \bar{w}\in \frakX_{\infty}$.
Then we have
\begin{align*}
&(u \dg v)\dg w\\
={}& (\lf\bar{u}\rf\dg \lf\bar{v}\rf)\dg\lf\bar{w}\rf\\
={}&(\lf\bar{u} \dg \lf\bar{v}\rf\rf+\lf\lf\bar{u}\rf \dg \bar{v}\rf+\lambda\lf\bar{u} \dg \bar{v}\rf+\kappa\bar{u} \dg \bar{v})\dg\lf\bar{w}\rf\\
={}& \lf\bar{u} \dg \lf\bar{v}\rf\rf\dg\lf\bar{w}\rf+\lf\lf\bar{u}\rf \dg \bar{v}\rf\dg\lf\bar{w}\rf+\lambda\lf\bar{u} \dg \bar{v}\rf\dg\lf\bar{w}\rf+\kappa(\bar{u} \dg \bar{v})\dg\lf\bar{w}\rf\\
={}&\lf(\bar{u} \dg \lf\bar{v}\rf)\dg\lf\bar{w}\rf\rf+\lf\lf\bar{u} \dg \lf\bar{v}\rf\rf\dg\bar{w}\rf+\lambda\lf(\bar{u} \dg \lf\bar{v}\rf)\dg\bar{w}\rf+\kappa(\bar{u} \dg \lf\bar{v}\rf)\dg\bar{w}\\
&+\lf(\lf\bar{u}\rf \dg \bar{v})\dg\lf\bar{w}\rf\rf+\lf\lf\lf\bar{u}\rf \dg \bar{v}\rf\dg\bar{w}\rf+\lambda\lf(\lf\bar{u}\rf \dg \bar{v})\dg\bar{w}\rf+\kappa(\lf\bar{u}\rf \dg \bar{v})\dg\bar{w}\\
&+\lambda\lf(\bar{u} \dg \bar{v})\dg\lf\bar{w}\rf\rf+\lambda\lf\lf\bar{u} \dg \bar{v}\rf\dg\bar{w}\rf+\lambda^2\lf(\bar{u} \dg \bar{v})\dg\bar{w}\rf+\lambda\kappa(\bar{u} \dg \bar{v})\dg\bar{w}\\
&+\kappa(\bar{u} \dg \bar{v})\dg\lf\bar{w}\rf\quad(\text{by Eq.~\meqref{eq:de}})\\
={}&\lf\bar{u} \dg (\lf\bar{v}\rf\dg\lf\bar{w}\rf)\rf+\lf\lf\bar{u} \dg \lf\bar{v}\rf\rf\dg\bar{w}\rf+\lambda\lf\bar{u} \dg (\lf\bar{v}\rf\dg\bar{w})\rf+\kappa\bar{u} \dg (\lf\bar{v}\rf\dg\bar{w})\\
&+\lf\lf\bar{u}\rf \dg (\bar{v}\dg\lf\bar{w}\rf)\rf+\lf\lf\lf\bar{u}\rf \dg \bar{v}\rf\dg\bar{w}\rf+\lambda\lf\lf\bar{u}\rf \dg (\bar{v}\dg\bar{w})\rf+\kappa\lf\bar{u}\rf \dg (\bar{v}\dg\bar{w})\\
&+\lambda\lf\bar{u} \dg (\bar{v}\dg\lf\bar{w}\rf)\rf+\lambda\lf\lf\bar{u} \dg \bar{v}\rf\dg\bar{w}\rf+\lambda^2\lf\bar{u} \dg (\bar{v}\dg\bar{w})\rf+\lambda\kappa\bar{u} \dg (\bar{v}\dg\bar{w})\\
&+\kappa\bar{u} \dg (\bar{v}\dg\lf\bar{w}\rf)\quad(\text{by the induction hypothesis})\\
={}&\lf\bar{u} \dg \underline{(\lf\bar{v}\rf\dg\lf\bar{w}\rf)}\rf+\lf\lf\bar{u} \dg \lf\bar{v}\rf\rf\dg\bar{w}\rf+\lambda\lf\bar{u} \dg (\lf\bar{v}\rf\dg\bar{w})\rf+\kappa\bar{u} \dg (\lf\bar{v}\rf\dg\bar{w})\\
&+\lf\lf\bar{u}\rf \dg (\bar{v}\dg\lf\bar{w}\rf)\rf+\lf\lf\lf\bar{u}\rf \dg \bar{v}\rf\dg\bar{w}\rf+\lambda\lf\lf\bar{u}\rf \dg (\bar{v}\dg\bar{w})\rf+\kappa\lf\bar{u}\rf \dg (\bar{v}\dg\bar{w})\\
&+\lambda\lf\bar{u} \dg (\bar{v}\dg\lf\bar{w}\rf)\rf+\lambda\lf\lf\bar{u} \dg \bar{v}\rf\dg\bar{w}\rf+\lambda^2\lf\bar{u} \dg (\bar{v}\dg\bar{w})\rf+\lambda\kappa\bar{u} \dg (\bar{v}\dg\bar{w})\\
&+\kappa\bar{u} \dg (\bar{v}\dg\lf\bar{w}\rf)\\
={}&\lf\bar{u} \dg \lf\bar{v}\dg\lf\bar{w}\rf\rf\rf+\lf\bar{u} \dg \lf\lf\bar{v}\rf\dg\bar{w}\rf\rf+\lambda\lf\bar{u} \dg \lf\bar{v}\dg\bar{w}\rf\rf+\kappa\lf\bar{u} \dg (\bar{v}\dg\bar{w})\rf\\
&+\lf\lf\bar{u} \dg \lf\bar{v}\rf\rf\dg\bar{w}\rf+\lambda\lf\bar{u} \dg (\lf\bar{v}\rf\dg\bar{w})\rf+\kappa\bar{u} \dg (\lf\bar{v}\rf\dg\bar{w})\\
&+\lf\lf\bar{u}\rf \dg (\bar{v}\dg\lf\bar{w}\rf)\rf+\lf\lf\lf\bar{u}\rf \dg \bar{v}\rf\dg\bar{w}\rf+\lambda\lf\lf\bar{u}\rf \dg (\bar{v}\dg\bar{w})\rf+\kappa\lf\bar{u}\rf \dg (\bar{v}\dg\bar{w})\\
&+\lambda\lf\bar{u} \dg (\bar{v}\dg\lf\bar{w}\rf)\rf+\lambda\lf\lf\bar{u} \dg \bar{v}\rf\dg\bar{w}\rf+\lambda^2\lf\bar{u} \dg (\bar{v}\dg\bar{w})\rf+\lambda\kappa\bar{u} \dg (\bar{v}\dg\bar{w})\\
&+\kappa\bar{u} \dg (\bar{v}\dg\lf\bar{w}\rf)
\end{align*}
by applying Eq.~\meqref{eq:de} to the above underlined factor. On the other hand, we get
\begin{align*}
&u \dg(v \dg w)\\
={}& \lf\bar{u}\rf \dg (\lf\bar{v}\rf \dg \lf\bar{w}\rf)\\
={}& \lf\bar{u}\rf \dg (\lf\bar{v} \dg \lf\bar{w}\rf\rf+\lf\lf\bar{v}\rf \dg \bar{w}\rf+\lambda\lf\bar{v} \dg \bar{w}\rf+\kappa\bar{v} \dg \bar{w})\\
={}&\lf\bar{u}\rf \dg \lf\bar{v} \dg \lf\bar{w}\rf\rf+\lf\bar{w}\rf \dg \lf\lf\bar{v}\rf \dg \bar{w}\rf+\lambda\lf\bar{u}\rf \dg \lf\bar{v} \dg \bar{w}\rf+\kappa\lf\bar{u}\rf \dg(\bar{v} \dg \bar{w})\\
={}&\lf\bar{u} \dg \lf\bar{v} \dg \lf\bar{w}\rf\rf\rf+\lf\lf\bar{u}\rf \dg (\bar{v} \dg \lf\bar{w}\rf)\rf+\lambda\lf\bar{u} \dg (\bar{v} \dg \lf\bar{w}\rf)\rf+\kappa\bar{u} \dg (\bar{v} \dg \lf\bar{w}\rf)\\
&+\lf\bar{u} \dg \lf\lf\bar{v}\rf \dg \bar{w}\rf\rf+\lf\underline{(\lf\bar{u}\rf \dg\lf\bar{v}\rf)} \dg \bar{w}\rf+\lambda\lf\bar{u} \dg (\lf\bar{v}\rf \dg \bar{w})\rf+\kappa\bar{u} \dg (\lf\bar{v}\rf \dg \bar{w})\\
&+\lambda\lf\bar{u} \dg \lf\bar{v} \dg \bar{w}\rf\rf+\lambda\lf\lf\bar{u}\rf \dg (\bar{v} \dg \bar{w})\rf+\lambda^2\lf\bar{u} \dg (\bar{v} \dg \bar{w})\rf+\lambda\kappa\bar{u} \dg (\bar{v} \dg \bar{w})\\
&+\kappa\lf\bar{u}\rf \dg (\bar{v} \dg \bar{w})\quad(\text{by the induction hypothesis and Eq.~\meqref{eq:de}}) \\
={}&\lf\bar{u} \dg \lf\bar{v} \dg \lf\bar{w}\rf\rf\rf+\lf\lf\bar{u}\rf \dg (\bar{v} \dg \lf\bar{w}\rf)\rf+\lambda\lf\bar{u} \dg (\bar{v} \dg \lf\bar{w}\rf)\rf+\kappa\bar{u} \dg (\bar{v} \dg \lf\bar{w}\rf)\\
&+\lf\bar{w} \dg \lf\lf\bar{w}'\rf \dg \bar{w}''\rf\rf+\lf\lf\bar{u} \dg\lf\bar{v}\rf\rf \dg \bar{w}\rf+\lf\lf\lf\bar{u}\rf \dg\bar{v}\rf \dg \bar{w}\rf+\lambda\lf\lf\bar{u} \dg\bar{v}\rf \dg \bar{w}\rf\\
&+\kappa{\lf(\bar{u} \dg\bar{v}) \dg \bar{w}}\rf+\lambda\lf\bar{u} \dg (\lf\bar{v}\rf \dg \bar{w})\rf+\kappa\bar{u} \dg (\lf\bar{v}\rf \dg \bar{w})\\
&+\lambda\lf\bar{u} \dg \lf\bar{v} \dg \bar{w}\rf\rf+\lambda\lf\lf\bar{u}\rf \dg (\bar{v} \dg \bar{w})\rf+\lambda^2\lf\bar{u} \dg (\bar{v} \dg \bar{w})\rf+\lambda\kappa\bar{u} \dg (\bar{v} \dg \bar{w})\\
&+\kappa\lf\bar{u}\rf \dg (\bar{v} \dg \bar{w})\quad(\text{by applying Eq.~\meqref{eq:de} to the above underlined factor})\\
={}&\lf\bar{u} \dg \lf\bar{v} \dg \lf\bar{w}\rf\rf\rf+\lf\lf\bar{u}\rf \dg (\bar{v} \dg \lf\bar{w}\rf)\rf+\lambda\lf\bar{u} \dg (\bar{w} \dg \lf\bar{w}\rf)\rf+\kappa\bar{u} \dg (\bar{v} \dg \lf\bar{w}\rf)\\
&+\lf\bar{u} \dg \lf\lf\bar{v}\rf \dg \bar{w}\rf\rf
+\lf\lf\bar{u} \dg\lf\bar{v}\rf\rf \dg \bar{w}\rf+\lf\lf\lf\bar{u}\rf \dg\bar{v}\rf \dg \bar{w}\rf+\lambda\lf\lf\bar{u} \dg\bar{v}\rf \dg \bar{w}\rf\\
&+\kappa\lf\bar{u}\dg(\bar{v} \dg \bar{w})\rf+\lambda\lf\bar{u} \dg (\lf\bar{v}\rf \dg \bar{w})\rf+\kappa\bar{u} \dg (\lf\bar{w}\rf \dg \bar{w})\\
&+\lambda\lf\bar{u} \dg \lf\bar{v} \dg \bar{w}\rf\rf+\lambda\lf\lf\bar{u}\rf \dg (\bar{v} \dg \bar{w})\rf+\lambda^2\lf\bar{u} \dg (\bar{v} \dg \bar{w})\rf+\lambda\kappa\bar{u} \dg (\bar{v} \dg \bar{w})\\
&+\kappa\lf\bar{u}\rf \dg (\bar{v} \dg \bar{w})
\end{align*}
by  the induction hypothesis. Then the $i$-th term in the former expansion of $(u \dg v)\dg w$ coincides with the $\sigma(i)$-th term in the latter expansion of $u \dg (v \dg w)$. Here the order 16 permutation $\sigma\in \sum_{16}$ is
\begin{equation*}
\begin{pmatrix}
i\\ \sigma(i)
\end{pmatrix}
=
\left ( \begin{array}{cccccccccccccccc}
1 & 2 & 3 & 4 & 5 & 6 & 7 & 8 & 9 & 10 & 11 & 12 & 13 & 14 & 15 & 16 \\
1 & 5 & 12 & 9 & 6 & 10 & 11 & 2 & 7 & 13 & 16 & 3 & 8 & 14 & 15 & 4
\end{array} \right).
\end{equation*}
Thus we have completed the inductive proof of the associativity of $\dg$.
	
\smallskip
\noindent
(\mref{it:erb}) follows from Eq.~\meqref{eq:de}.
	
\smallskip
\noindent
(\mref{it:ferb})
Let $(R,\ast, P)$ be an \wmrb, and let $f:A\to R$ be an algebra homomorphism. We will define  an \wmrb homomorphism $\tilde{f}:\ce\to R$ by defining $\tilde{f}(w)$ for all basis elements $w\in \frakX_{\infty}$. This is carried out by induction on the depth of $w$, $\dep(w)\geq 0$.  If $\dep(w)=0$, then $w\in \frakX_{0}=S(X)$.  If $w=w_1\cdots w_m\in S(X)$, where $w_i\in X$ for $i=1,\cdots, m$, then define
\begin{equation}\mlabel{eq:baf}
		\tilde{f}(w)=f(w_1)\ast\cdots\ast f(w_m).
\end{equation}
	
For a given $k\geq 0$,  assume that $\tilde{f}$ has been defined for all $w\in \frakX_{\infty}$ with $\dep(w)\leq k$, and consider $w\in \frakX_{\infty}$ with $\dep(w)=k+1\geq 1$. So $w\in \frakX_{k+1}\backslash \frakX_k$. By the definition of $\frakX_{k+1}$, we have
	\begin{align*}
		\frakX_{k+1}={}& \Big(\bigcup_{r\geq 1}(S(X)\lf \frakX_{k}\rf)^r\Big)\bigcup\Big(\bigcup_{r\geq 0}(S(X)\lf \frakX_{k}\rf)^rS(X)\Big)\\
		&\bigcup\Big(\bigcup_{r\geq 1}(\lf \frakX_{k}\rf S(X))^r\Big)\bigcup\Big(\bigcup_{r\geq 0}(\lf \frakX_{k}\rf S(X))^r\lf \frakX_{k}\rf\Big).
	\end{align*}
We only consider the case when $w$ is in the first union component $\bigcup_{r\geq 1}(S(X)\lf \frakX_{n}\rf)^r$ above. The other cases are similar. Then we can write
$$w=\prod_{i=1}^{r}(w_{2i-1}\lf w_{2i}\rf),$$
where $w_{2i-1}\in S(X)$ and $w_{2i}\in \frakX_{k}, 1\le i\le r$. By the definitions of  $\dg$ and $P_e$, we get
$$w=\dg_{i=1}^{r}\Big(w_{2i-1} \dg P_e(\frakX_{2i})\Big).$$
Define
\begin{equation}\mlabel{eq:dfnf}
		\tilde{f}(w)=\ast_{i=1}^{r}\Big(\tilde{f}(w_{2i-1})\ast P\big(\tilde{f}(w_{2i})\big)\Big),
\end{equation}
where $\tilde{f}(w_{2i-1})$ has been defined by Eq.~\meqref{eq:baf}, and $\tilde{f}(w_{2i})$ are well-defined by the induction hypothesis. Thus this completes the induction. Extending by $\bfk$-linearity, we obtain a linear map $\tilde{f}:\ce\to R$, which satisfies $\tilde{f}\circ j_A= f$ by Eq.~\meqref{eq:baf}.
	
Note that for all $w\in \frakX_{\infty}$, $P_e(w)=\lf w\rf\in \frakX_{\infty}$. Then by the definition of $\tilde{f}$, we have
\begin{equation}\mlabel{eq:commgrba}
		\tilde{f}(P_e(w))=P(\tilde{f}(w)).
\end{equation}
For all $w\in\frakX_\infty$, let $w=w_1\cdots w_m$ be the alternating decomposition of $w$.  By the definition of $\tilde{f}$ again,
\begin{equation}\mlabel{eq:dfns}
		\tilde{f}(w)=\tilde{f}(w_1)\ast\cdots \ast \tilde{f}(w_m).
\end{equation}
Next we shall verify that the map $\tilde{f}$ defined above is an algebra homomorphism. It suffices to show that
\begin{equation}\mlabel{eq:ahomo}
		\tilde{f}(u \dg v)=\tilde{f}(u)\ast\tilde{f}(v),\quad u,v\in \frakX_{\infty}.
\end{equation}
Proceed by induction on the sum $n:=d(u)+d(v)\geq 0$. When $n=0$, then $u,v\in \frakX_{0}$. By  Eq.~\meqref{eq:baf}, we get
\begin{equation}
		\tilde{f}(u \dg v)=\tilde{f}(u)\ast \tilde{f}(v).
		\mlabel{eq:ast}
\end{equation}
For a given $k\geq 0$,	assume that Eq.~\meqref{eq:ahomo} holds for $u,v\in \frakX_{\infty}$ with $0\le n\le k$.  Now consider the case of $n=k+1\geq 1$. Let $u=u_1\cdots u_m $ and $v=v_1\cdots v_{\ell}$ be the alternating decompositions of $u,v$, respectively. If $u_m, v_1$ are in $\frakX_0$, then $\tilde{f}(u_m\dg v_1)= \tilde{f}(u_m)\ast \tilde{f}(v_1)$ by Eq.~\meqref{eq:ast}. For the other cases,  by Eq.~(\mref{eq:de}), we have
\begin{equation*}
\tilde{f}(u_m \dg v_1) = \left\{\begin{array}{ll} \tilde{f}(\lf\bar{u}_m \dg \lf\bar{v}_1\rf+\lf\bar{u}_m\rf \dg \bar{v}_1+\lambda\bar{u}_m \dg \bar{v}_1\rf+\kappa\bar{u}_m \dg \bar{v}_1), & \text{if}\, u_m=\lf\bar{u}_m\rf, v_1=\lf\bar{v}_1\rf,\\
\tilde{f}(u_m v_1),  &\text{otherwise}.
\end{array} \right .
\end{equation*}
For the first subcase,  we have
\begin{align*}
\tilde{f}(u_m \dg v_1)={}&\tilde{f}\Big(\lf\bar{u}_m \dg \lf\bar{v}_1\rf+\lf\bar{u}_m\rf \dg \bar{v}_1+\lambda\bar{u}_m \dg \bar{v}_1\rf+\kappa\bar{u}_m \dg \bar{v}_1\Big)\\
={}& P\Big(\tilde{f}(\bar{u}_m \dg \lf\bar{v}_1\rf)+\tilde{f}(\lf\bar{u}_m\rf \dg \bar{v}_1)+\lambda \tilde{f}(\bar{u}_m \dg \bar{v}_1)\Big)+\kappa\tilde{f}(\bar{u}_m \dg \bar{v}_1)\quad(\text{by Eq.~\meqref{eq:commgrba}})\\
={}& P\Big(\tilde{f}(\bar{u}_m) \ast P(\tilde{f}(\bar{v}_1))+P(\tilde{f}(\bar{u}_m)) \ast \tilde{f}(\bar{v}_1)+\lambda \tilde{f}(\bar{u}_m) \ast \tilde{f}(\bar{v}_1)\Big)+\kappa\tilde{f}(\bar{u}_m)\ast \tilde{f}(\bar{v}_1)\\
&\quad(\text{by the induction hypothesis})\\
={}& P\big(\tilde{f}(\bar{u}_m)\big)\ast P\big(\tilde{f}(\bar{v}_1)\big)\quad(\text{by $P$ being an \wmrbo})\\
={}& \tilde{f}(\lf\bar{u}_m\rf)\ast \tilde{f}(\lf\bar{v}_1\rf)\quad(\text{by Eq.~\meqref{eq:commgrba}})\\
={}& \tilde{f}(u_m)\ast \tilde{f}(v_1).
\end{align*}
For the second subcase, by Eq.~\meqref{eq:dfns}, we get $\tilde{f}(u_m \dg v_1)=\tilde{f}(u_m v_1)=\tilde{f}(u_m)\ast\tilde{f}(v_1)$.
Therefore, we have 
\begin{align*}
\tilde{f}(u \dg v)={}& \tilde{f}\big(u_1\cdots u_{m-1}(u_m \dg v_1)v_2\cdots v_{n}\big)\\
={}& \tilde{f}(u_1)\ast\cdots\ast\tilde{f}(u_{m-1})\ast\tilde{f}(u_m \dg v_1)\ast\tilde{f}(v_2)\ast\cdots\ast\tilde{f}(v_{n})\quad(\text{by Eq.~\meqref{eq:dfns}})\\
={}& \tilde{f}(u_1)\ast\cdots\ast\tilde{f}(u_{m-1})\ast\tilde{f}(u_m)\ast\tilde{f}(v_1)\ast\tilde{f}(v_2)\ast\cdots\ast\tilde{f}(v_{n})\\
={}& \tilde{f}(u)\ast\tilde{f}(v),
\end{align*}
as desired.
	
Finally, we verify the uniqueness of $\tilde{f}$. Suppose that there exists another \wmrb homomorphism $\tilde{f'}$ suth that $\tilde{f'}\circ j_A=f$. We shall prove that
\begin{equation}\mlabel{eq:unique}
		\tilde{f}(w)=\tilde{f}(w), \ w\in \frakX_{\infty}.
\end{equation}
By induction on $n:=\dep(w)\geq 0$, for $n=0$, we have $w=w_1\cdots w_m\in \frakX_{0}$ for $w_i\in X$ and $i=1,\cdots,m$, and  so
\begin{eqnarray*}
\tilde{f'}(w)&=&\tilde{f'}\Big(w_1\dg\cdots\dg w_m\Big)\\
&=&\tilde{f'}(w_1)\ast\cdots\ast \tilde{f'}( w_m)\\
&=&f(w_1)\ast\cdots\ast f( w_m)\quad(\text{by $\tilde{f'}\circ j_X=f$})\\
&=&\tilde{f}(w).
\end{eqnarray*}
For a given $\ell\geq 0$, assume that Eq.~(\mref{eq:unique}) holds for all $w\in \frakX_\infty$ with $n\leq \ell$. Consider the case of $n=\ell+1$. Let $w=w_1\cdots w_m$ be the alternating decomposition of $w$.  If $w_i\in X$, then $\tilde{f'}(w_i)=\tilde{f}(w_i)$ by the case of $n=0$.  If $w_i\in \lf \frakX_\infty\rf$, then we write $w_i=\lf\bar{w}_i\rf$ with $\bar{w}_i\in \frakX_{\infty}$.  By the induction hypothesis,
$$\tilde{f'}(w_i)=P\big(\tilde{f'}(\bar{w}_i)\big)=P\big(\tilde{f}(\bar{w}_i)\big)=\tilde{f}(w_i).$$
Then we have
\begin{align*}
\tilde{f'}(w)={}& \tilde{f'}(w_1\cdots w_m)\\
={}& \tilde{f'}(w_1\dg\cdots\dg w_m)\\
={}& \tilde{f'}(w_1)\ast\cdots\ast\tilde{f'}(w_m)\\
={}& \tilde{f}(w_1)\ast\cdots\ast\tilde{f}(w_m)\\
={}& \tilde{f}(w_1\cdots w_m)\\
={}& \tilde{f}(w).
\end{align*}
Thus this completes the induction and so the proof of the uniqueness of $\tilde{f}$. Hence the proof of Theorem~\mref{thm:free} is finished.
\end{proof}

%%%%%%%%%%%%%%%%%%%%%%%%%%%%%%%%%%%%%%%%%%%%%%%%%%%%%%%%%%%%%%%%%%
%%%%%%%%%%%%%%%%%%%%%%%%%%%%%%%%%%%%%%%%%%%%%%%%%%%%%%%%%%%%%%%%%%

\section{Extended (tri)dendriform algebras as the companion structures of \wmrbs}
\mlabel{sec:wmrbet}

Let $P:R\to R$ be an \wmrbo. By Theorem~\mref{thm:erbt}, the three operations $\prp, \scp$ and $\obp$ given in Eq.~\meqref{eq:prp} satisfy  the \et relations from Eq.~\meqref{eq:e1} to Eq.~\meqref{eq:e7}. To give a precise formulation and answer to the converse question, we first introduce the general notions of tri-companion and di-companion of the operad of an operated algebra as in Definition~\mref{de:mapset} that encode all binary quadratic relations that can be possibly derived from the operated algebra. We then use the construction of free \wmrbs obtained the previous section to show that the operads of extended tridendriform algebras and extended dendriform algebras coincide respectively with the tri-companion and di-companion of the operad of \wmrbs. The di-companion of extended Rota-Baxter algebra leads to a classification of more general extended dendriform algebras. 

\subsection{A general formulation of companion structures of unary binary operads}
\mlabel{ss:ind}

We first recall the notion of a binary quadratic nonsymmetric operad  (called ns operad for short). 
For details on binary quadratic ns operads, see~\mcite{LV12,ZGG}.
\begin{defn}
		\begin{enumerate}
			\item
			A {\bf graded vector space} is a sequence  $\calp:=\{\calp_n\}_{n\geq 0}$ of vector spaces $\calp_n, n\geq 0$.
			\item
			A {\bf nonsymmetric (ns) operad} is a graded vector space $\calp=\{\calp_n\}_{n\geq 0}$ equipped with {\bf partial compositions}:
			\begin{equation}
				\circ_i:=\circ_{m,n,i}: \calp_m\ot \calp_n\longrightarrow \calp_{m+n-1}, \quad 1\leq i\leq m,
				\mlabel{eq:opc}
			\end{equation}
			such that, for $\eta\in\calp_\ell, \mu\in\calp_m$ and $\nu\in\calp_n$, the following relations hold.
			\begin{enumerate}
				\item[(i)] $
				(\eta \circ_i \mu)\circ_{i-1+j}\nu = \eta\circ_i (\mu\circ_j\nu), \quad 1\leq i\leq \ell, 1\leq j\leq m.$
				\mlabel{it:esc}
				\item[(ii)]$(\eta\circ_i\mu)\circ_{k-1+m}\nu =(\eta\circ_k\nu)\circ_i\mu, \quad
				1\leq i<k\leq \ell.$
				\mlabel{it:epc}
				\item[(iii)]
				There is an element $\id\in \calp_1$ such that $\id\circ \mu=\mu$ and $\mu\circ\id=\mu$ for $\mu\in \calp_n, n\geq 0$.
				\mlabel{it:id}
			\end{enumerate}
		\end{enumerate}
\end{defn}

A class of ns operads can be given more explicitly as follows. Here we follow the exposition in~\mcite{Gub}.
\begin{defn}
	\begin{enumerate}
		\item
		A ns operad $\calp=\{\calp_n\}$ is called {\bf binary} if $\calp_1=\bfk.\id$ and $\calp_n,
		n\geq 3$ are induced from $\calp_2$ by composition. Then in particular, for the free operad, we have
		\begin{equation}
			\calp_3=(\calp_2 \circ_1 \calp_2) \oplus (\calp_2\circ_2 \calp_2),
			\mlabel{eq:bq}
		\end{equation}
		which can be identified with $\calp_2^{\ot 2}\oplus \calp_2^{\ot 2}$.
		\item
		A binary ns operad $\calp$ is called {\bf quadratic} if
		all relations among the binary operations in $\calp_2$ are derived
		from $\calp_3$.
		\end{enumerate}
\end{defn}

A binary quadratic ns operad $\calp$
is determined by a pair $(\dfgen,\dfrel)$ where $\dfgen=\calp_2$, called
the {\bf generator space },
and $\dfrel$ is a subspace of $\dfgen^{\otimes 2} \oplus \dfgen^{\otimes 2}$,
called the {\bf relation  space}. More precisely, let $\calf(V)$ be the free ns operad on $V$. Then the binary, quadratic operad $\calp$ is the quotient of $\calf(V)$
$$\calp(\dfgen,\dfrel):=\calf(V)/(\dfrel),$$
where $(\dfrel)$ is the operadic ideal  of $\calf(V)$ generated by $\dfrel$. 

Since every element of $\dfgen^{\ot 2}$ is a sum $\sum\limits_{i=1}^k\dfoa_i\otimes \dfob_i$ with $\dfoa_i,\dfob_i\in V, 1\leq i\leq k$, every element of
		$\dfgen^{\otimes 2} \oplus \dfgen^{\otimes 2}$ can be written as a pair
		$$\Big (\sum_{i=1}^k\dfoa_i\otimes \dfob_i, \sum_{j=1}^m\dfoc_j\otimes \dfod_j \Big)$$
		with $\dfoa_i,\dfob_i,\dfoc_j,\dfod_j\in \dfgen, 1\leq i\leq k, 1\leq j\leq m, k, m\geq 1$.  For a given binary quadratic ns operad $\calp:=\calp(\dfgen,\dfrel)$, a vector space $A$ is called a {\bf $\calp$-algebra}
		if $A$ has binary operations (indexed by) $\dfgen$ and if,
		for
		$$\Big (\sum_{i=1}^k\dfoa_i\otimes \dfob_i, \sum_{j=1}^m\dfoc_j\otimes \dfod_j \Big)
		\in \dfrel \subseteq
		\dfgen^{\otimes 2} \oplus \dfgen^{\otimes 2},$$
		we have
		\begin{equation}
			\sum_{i=1}^k(x\dfoa_i y) \dfob_i z = \sum_{j=1}^m x \dfoc_j (y \dfod_j z), \quad\ x,y,z\in A.
			\mlabel{eq:rel}
		\end{equation}

In particular, for fixed $\lambda, \kappa\in \bfk$,  the {\bf ns operad of extended tridendriform algebras} is defined by the the quotient
$$\oetd:=\oetd_{\lambda,\kappa}:=\calp(V,\dfrel_{\rm ETD}),$$ 
where $V$ is the generator space $\bfk\{\prec, \succ, \ob\}$ spanned by $\prec, \succ, \ob$, and $\dfrel_{\rm ETD}:=\dfrel_{\rm ETD,\lambda,\kappa}\subseteq V^{\ot 2}\oplus V^{\ot 2}$ is the relation space spanned by 
\begin{eqnarray}
	\begin{split}
		&(\prec \ot \prec,\prec\ot \star_\lambda+{\kappa}\ob\ot \ob),\;(\succ \ot \prec,\succ\ot \prec), \;(\star_\lambda\ot \succ +\kappa\ob\ot \ob, \succ\ot \succ),\\
		&(\succ\ot\ob, \succ\ot\ob),\; (\prec\ot\ob, \ob\ot\succ),\; (\ob\ot\prec, \ob\ot\prec),\;(\ob\ot\ob, \ob\ot\ob), 
	\end{split}	
	\mlabel{eq:wdn}
\end{eqnarray}
where $\star_\lambda=\prec+\succ+\lambda\,\ob\,$.

We now give a general framework to formulate the derived structures of an algebra with equipped with linear operators, such derivations and Rota-Baxter operators. 

Recall that a {\bf unary binary operad} is an operad with only unary and binary operations~\mcite{ZGG}. More precisely, it is a quotient of the free ns operad $\calf(W)$ where the graded space $W=W_1\oplus W_2$ is concentrated in degrees one and two. So a unary binary operad is of the form $\calp=\calp(W,S)=\calf(W)/(S)$, where $(S)$ is the operadic ideal generated by a subset $S\subseteq\calf(W)$.

Let $\Omega$ and $\Theta$ be nonempty sets. Let 
$$\rho_\Omega:=\{\rho_\omega\,|\, \omega\in \Omega\}$$ be a linear basis of $W_1$, and let $$\mu_\Theta:=\{\mu_\theta\,|\,\theta\in \Theta\}$$ 
be a linear basis of $W_2$. 
For a given uniary binary operad 
$$\calq:=\calp(W,S)=\calf(W)/(S),$$
define the graded vector space concentrated at degree two: $$\genpost:=\genpost_{\Omega,\Theta}=\bfk\{\prec_{\omega,\theta},\succ_{\omega,\theta}, \ob_{\theta}\,|\,\omega\in \Omega, \theta\in \Theta\},$$
and define the operad homomorphism, called the {\bf tri-companion homomorphism}  
\begin{equation} \mlabel{eq:genindtri}
\morpost:=\morpost_{\calq}: \calf(\genpost)\to \calq, \quad
\left\{\begin{array}{l} \prec_{\omega,\theta}\mapsto \mu_\theta(\rho_\omega\ot \id), \\ \succ_{\omega,\theta}\mapsto \mu_\theta(\id\ot \rho_\omega),\\
 \ob_{\theta}\mapsto \mu_\theta,
 \end{array}\right. \theta\in \Theta, \omega\in \Omega.
\end{equation}

Denote 
\begin{equation} \mlabel{eq:genindreltri}
\relpost:=\relpost{}_\calq:=\ker \morpost \cap \Big(\genpost^{\ot 2}\oplus \genpost^{\ot 2}\Big).
\end{equation}
\begin{defn} \mlabel{de:postind}
The {\bf tri-companion binary quadratic operad} or simply the {\bf tri-companion} of the unary binary operad $\calq=\calp(W,S)$ is defined to be  the binary quadratic ns operad given by the quotient 
\begin{equation}\mlabel{eq:genindquottri} \oppost:=\calf(\genpost)/(\relpost).
\end{equation}

\end{defn}

Similarly, define the graded vector space concentrated at degree two: $$\genpre:=\genpre_{\Omega,\Theta}=\bfk\{\prec_{\omega,\theta},\succ_{\omega,\theta}\,|\,\omega\in \Omega, \theta\in \Theta\}.$$
Define an operad homomorphism, called the {\bf di-companion homomorphism}  
\begin{equation} \mlabel{eq:geninddi}
	\morpre:=\morpre_{\calq}: \calf(\genpre)\to \calq, \quad
	\left\{\begin{array}{l} \prec_{\omega,\theta}\mapsto \mu_\theta(\rho_\omega\ot \id), \\ \succ_{\omega,\theta}\mapsto \mu_\theta(\id\ot \rho_\omega),
	\end{array}\right. \theta\in \Theta, \omega\in \Omega.
\end{equation}
Denote 
\begin{equation} \mlabel{eq:genindreldi}
	\relpre:=\relpre{}_\calq:=\ker \morpre \cap \Big(\genpre^{\ot 2}\oplus \genpre^{\ot 2}\Big).
\end{equation}
\begin{defn} \mlabel{de:preind}
The {\bf di-companion binary quadratic operad} or simply the {\bf di-companion} of $\calq=\calp(W,S)$ is defined to be  the binary quadratic ns operad given by the quotient \begin{equation}\mlabel{eq:genindquotdi} \oppre:=\calf(\genpre)/(\relpre).
\end{equation}
\end{defn}

\begin{remark}
\begin{enumerate}
\item 
Formally, the di-companion can be regarded as the tri-companion when the multiplications $\ob_\theta$ are taken to be zero. But the two structures can be quite different. To illustrate this phenomenon, we give separate discussions of extended tridendriform algebras and extended dendriform algebras in the next two subsections. 	
\item
Similar calculations as in the next two subsections show that the operads of (tri)dendriform algebras are the tri-(resp. di-)companions of the operad of Rota-Baxter algebras. Other derived structures (of nonsymmetric operads) can also be put into this framework. See~\mcite{LG} for the case of Nijenhuis algebras.  
\item 
A similar approach can be taken for symmetric operads. See~\mcite{KSO} for the case when the space of unary operations is given by a derivation. 
\end{enumerate}
\end{remark}

\subsection{Extended tridendriform algebras as the tri-companion of extended Rota-Baxter algebras}
\mlabel{ss:eterb}

Let $\oerb:=\oerb_{\lambda,\kappa}$ be the {\bf ns operad of extended Rota-Baxter algebras of weight $(\lambda,\kappa)$}. It can be realized as the quotient 
$\oerb=\calp(W,\dfrel_{\rm ERB,\lambda,\kappa})$ where $W=W_1\oplus W_2$ is the generator space  with $W_1=\bfk\,\rho$ and $W_2=\bfk \mu$, and $\dfrel_{\rm ERB,\lambda,\kappa}$ is the relation space spanned by the associativity of $\mu$ and the extended Rota-Baxter axiom
$$\mu(\rho\ot \rho)-\rho(\mu(\id\ot \rho))-\rho(\mu(\rho\ot \id))-\lambda\rho(\mu(\id\ot \id))-\kappa \mu(\id\ot \id)$$
coming from Eq.~\meqref{eq:grb}. 

\begin{theorem}\mlabel{thm:owdn}
Fix $\lambda,\kappa\in \bfk$. The operad $\oetd$ is the tri-companion $\mathrm{TC}(\oerb)$ of the operad of extended Rota-Baxter algebras. 
More precisely, let $V=\bfk\{\prec,\succ,\ob\}$ be the vector space with basis $\{\prec,\succ,\ob\}$. For the operadic homomorphism 
\begin{equation}
 \Phi: \calf(V) \to \oerb, \quad	\prec\,\mapsto \mu(\id\ot \rho), \quad \succ\,\mapsto \mu(\rho\ot \id), \quad \ob \mapsto \mu,
	\mlabel{eq:ofreerb}
\end{equation}
we have  
\begin{equation}
	\ker \Phi \cap (V^{\ot 2}\oplus V^{\ot 2}) = \dfrel_{\rm ETD}.
\end{equation}
\end{theorem}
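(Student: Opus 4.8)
The plan is to realize the arity-three part of $\oerb$ concretely inside a free \wmrb and then argue by a dimension count. By Theorem~\mref{thm:free} and Corollary~\mref{coro:freerbm}, the free $\oerb$-algebra on a set is the free \wmrb with its explicit basis of Rota-Baxter bracketed words; hence the multilinear arity-three component of $\oerb$ embeds, faithfully, into the free \wmrb on three ordered generators $\{x,y,z\}$, with operadic composition computed through the product $\dg$ and the operator $\pe=\lf\ \rf$. Under $\Phi$ the generators act as $x\prp y=x\lf y\rf$, $x\scp y=\lf x\rf y$ and $x\obp y=xy$ (this is exactly Eq.~\meqref{eq:prp}), so each left comb is sent to the explicit element $(x\dfoa y)\dfob z$ and each right comb to $x\dfoc(y\dfod z)$, expanded in the basis of bracketed words. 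The whole computation of $\ker\Phi$ in degree three therefore reduces to linear algebra among bracketed words.

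First I would dispose of the inclusion $\dfrel_{\rm ETD}\subseteq\ker\Phi$. This is precisely the operadic content of Theorem~\mref{thm:erbt}: the seven relations \meqref{eq:e1}--\meqref{eq:e7} hold for $\prp,\scp,\obp$ in every \wmrb, in particular the free one, so each of the seven spanning pairs of $\dfrel_{\rm ETD}$ maps to zero. Reading off the left-comb components shows these seven pairs are linearly independent for all $\lambda,\kappa$, so $\dim(\ker\Phi\cap(V^{\ot 2}\oplus V^{\ot 2}))\ge 7$; equivalently $\operatorname{rank}\Phi|_{V^{\ot 2}\oplus V^{\ot 2}}\le 18-7=11$.

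The substance is the reverse bound $\operatorname{rank}\Phi|_{V^{\ot 2}\oplus V^{\ot 2}}\ge 11$, which I would establish by exhibiting eleven images that are linearly independent as bracketed words. The nine right combs already supply nine of them: computing $x\dfoc(y\dfod z)$ for the nine choices yields nine elements, eight of which are pairwise distinct single bracketed words (for instance $\prec\prec\mapsto x\lf y\lf z\rf\rf$, $\succ\prec\mapsto\lf x\rf y\lf z\rf$, $\ob\ob\mapsto xyz$), while the ninth, the image of $\succ\succ$, expands as $\lf x\lf y\rf\rf z+\lf\lf x\rf y\rf z+\lambda\lf xy\rf z+\kappa xyz$ after reducing the adjacent brackets $\lf x\rf\dg\lf y\rf$ via Eq.~\meqref{eq:de}; these nine are visibly independent. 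To reach eleven I would adjoin the two left combs $\prec\succ$ and $\ob\succ$, which evaluate to the single words $\lf x\lf y\rf\rf z$ and $\lf xy\rf z$. Together with $\ob\ob\mapsto xyz$ and the expansion of $\succ\succ$, these recover $\lf\lf x\rf y\rf z$ as well, so the eleven chosen images span the eleven-dimensional space of bracketed words $x\lf y\lf z\rf\rf,\ x\lf\lf y\rf z\rf,\ x\lf yz\rf,\ \lf x\rf y\lf z\rf,\ \lf x\lf y\rf\rf z,\ \lf\lf x\rf y\rf z,\ \lf xy\rf z,\ xyz,\ \lf x\rf yz,\ xy\lf z\rf,\ x\lf y\rf z$ and are therefore independent. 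Combining the two bounds forces $\operatorname{rank}\Phi|_{V^{\ot 2}\oplus V^{\ot 2}}=11$ and $\dim\ker=7$, whence $\ker\Phi\cap(V^{\ot 2}\oplus V^{\ot 2})=\dfrel_{\rm ETD}$.

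The main obstacle I anticipate is bookkeeping that is routine but genuinely necessary: the induced operations $\prp$ and $\scp$ create brackets, so composing them frequently produces adjacent brackets $\lf\ \rf\lf\ \rf$ which are not reduced bracketed words and must be rewritten through the product formula \meqref{eq:de} (this is exactly the step that makes $\succ\succ$ expand into four terms), with the weights $\lambda$ and $\kappa$ carried correctly. The final point to watch is that the independence argument must hold uniformly in $\lambda,\kappa$: since the expansion of $\succ\succ$ attaches the coefficient $\lambda$ to $\lf xy\rf z$, one cannot recover that word from $\succ\succ$ alone when $\lambda=0$, which is why I take the supplementary left comb $\ob\succ$ (yielding $\lf xy\rf z$ directly) rather than relying on a second $\succ$-type comb.
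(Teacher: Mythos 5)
Your proposal is correct, and it shares the paper's foundations --- the realization of $\oerb(3)$ inside the free \wmrb on $\{x,y,z\}$ from Theorem~\mref{thm:free} and Corollary~\mref{coro:freerbm}, the linear independence of the RBW basis, and Theorem~\mref{thm:erbt} for the easy inclusion --- but it handles the hard inclusion $\ker \Phi \cap (V^{\ot 2}\oplus V^{\ot 2}) \subseteq \dfrel_{\rm ETD}$ by a genuinely different mechanism. The paper takes an arbitrary $18$-coefficient element $r$, imposes $\Phi(r)=0$, expands everything in the free \wmrb, extracts a system of linear equations from the independence of the eleven RBWs, solves that system explicitly, and substitutes the solution back to exhibit $r$ as a combination of the seven spanning pairs. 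You instead never solve the system: you check that the seven spanning pairs of $\dfrel_{\rm ETD}$ are independent (so the kernel has dimension at least $7$, i.e.\ $\operatorname{rank}\Phi \le 11$ on this $18$-dimensional space), then exhibit eleven comb images --- the nine right combs plus the left combs for $\prec\ot\succ$ and $\ob\ot\succ$ --- whose expansions in RBWs are independent uniformly in $\lambda,\kappa$, forcing $\operatorname{rank}\Phi = 11$ and hence equality of the two $7$-dimensional spaces by rank--nullity. Your computations check out (the expansion of the right comb $\succ\ot\succ$ as $\lf x\lf y\rf\rf z+\lf\lf x\rf y\rf z+\lambda\lf xy\rf z+\kappa xyz$, and the recovery of $\lf\lf x\rf y\rf z$ from it, are exactly right), and your care in choosing $\ob\ot\succ$ rather than relying on the $\lambda$-weighted term is precisely what makes the count uniform in the weights. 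What the dimension count buys is brevity and the avoidance of an $18$-unknown linear system; what the paper's explicit solution buys is a parametrization of the whole kernel, which is the form of argument that carries over directly to the di-companion (Theorem~\mref{thm:edn} and Corollary~\mref{co:threecases}), where the kernel dimension genuinely depends on the vanishing of $\lambda$ and $\kappa$ and a fixed-dimension count like yours would need a case split. One presentational point: both your argument and the paper's need the standard fact that an arity-$3$ element of $\oerb$ vanishes if and only if the corresponding identity holds in the free \wmrb on three generators; you state this embedding explicitly at the outset, which is a virtue --- the paper leaves it implicit, particularly in the converse inclusion.
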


\begin{proof}
We first prove $\ker \Phi \cap (V^{\ot 2}\oplus V^{\ot 2}) \subseteq  \dfrel_{\rm ETD}.$

With $V=\bfk \{\prec, \succ, \ob\}$, we have
	$$V^{\ot 2} \oplus V^{\ot 2} = \bigoplus\limits_{\dfop_1,\dfop_2,\dfop_3,\dfop_4\in \{\prec,\succ,\ob\}} \bfk (\dfop_1\ot\dfop_2, \dfop_3\ot\dfop_4).$$
	Then every element $r$ of $\dfgen^{\otimes 2} \oplus \dfgen^{\otimes 2}$ is a linear combination
	\begin{equation}
\begin{split}		
	r=&a_1(\prec\ot \prec,0)+a_2(\prec\ot\succ,0)+a_3(\prec\ot \ob,0)\\
		&+b_1(\succ\ot \prec,0)+b_2(\succ\ot \succ,0)+b_3(\succ\ot\ob,0)\\
		&+c_1(\ob\,\ot \prec,0)+c_2(\ob\,\ot\succ,0)+c_3(\ob\ot\ob,0)\\
		&+d_1(0,\prec\ot \prec)+d_2(0,\prec\ot \succ)+d_3(0,\prec\ot \ob)\\
		&+e_1(0,\succ\ot \prec)+e_2(0,\succ\ot \succ)+e_3(0,\succ\ot\ob)\\
		&+f_1(0,\ob\ot\prec)+f_2(0,\ob\ot\succ) +f_3(0,\ob\ot\ob),
\end{split}
\mlabel{eq:rrel}
	\end{equation}
	where the coefficients $a_i,b_i,c_i,d_i,e_i,f_i$ are in $\bfk$ for $1\leq i\leq 3$.
	
Note that, for $r$ in the form of Eq.~\meqref{eq:rrel}, its image $\Phi(r)$ is obtained by simply replacing $\prec$, $\succ$ and $\ob$ by $\mu(\id\ot \rho), \mu(\rho \ot \id)$ and $\mu$ respectively. 

Now suppose that $r\in V^{\ot 2}\oplus V^{\ot 2}$ is in $\ker \Phi$. Then $\Phi(r)=0$ in $\oerb$ means that $\Phi(r)$ is an identity that is satisfied by every extended Rota-Baxter algebra $(R,P)$ of weight $\lambda$, where  $\mu(\id\ot \rho), \mu(\rho \ot \id)$ and $\mu$ correspond to the multiplications  
$$\mu(\id\ot \rho)(x\ot y)=x\prec_P y=xP(y), \quad \mu(\rho\ot \id)(x\ot y)=x\succ_P y=P(x)y, \quad \mu(x\ot y)=xy, \quad x, y\in R.$$

Therefore, for all $x,y,z\in R$, we have 
	\begin{eqnarray*}
		&&a_1(x\prec_P y)\prec_P z+a_2(x\prec_P y)\succ_P z +a_3(x\prec_P y)\obp z \\
		&& +b_1(x\succ_P y)\prec_P z+b_2(x\succ_P y)\succ_P z+b_3(x\succ_P y)\obp z\\
		&&+c_1(x\obp y)\prec_P z+c_2(x\obp y)\succ_P z+c_3(x\obp y)\obp z\\
		&&-d_1 x\prp(y\prp z)-d_2 x\prp(y\scp z)-d_3 x\prp(y\obp z)\\
		&&-e_1 x\scp(y\prp z)-e_2 x\scp(y\scp z)-e_3x\scp(y\obp z) \\
		&&-f_1x \obp(y\prp z)-f_2x \obp (y\scp z)-f_3x \obp (y\obp z)
		=0.
	\end{eqnarray*}
That is,
\begin{eqnarray*}
		&&a_1xP(y)P(z)+a_2P(xP(y))z+ a_3xP(y)z+b_1P(x)yP(z)
		+b_2P(P(x)y)z+ b_3P(x)yz\\
		&&+ c_1xyP(z)+ c_2P(xy)z
		+ c_3xyz-d_1xP(yP(z))-d_2xP(P(y)z)
		-d_3xP(yz)\\
		&&-e_1P(x)yP(z)-e_2P(x)P(y)z-e_3P(x)yz-f_1xyP(z)-f_2xP(y)z-f_3xyz
		=0.
	\end{eqnarray*}
	By $P$ being an \wmrbo, we obtain
	\begin{eqnarray*}
		&&a_1xP(yP(z))+a_1xP(P(y)z)+\lambda a_1xP(yz)+\kappa a_1xyz+a_2P(xP(y))z+ a_3xP(y)z\\
		&&+b_1P(x)yP(z)
		+b_2P(P(x)y)z+ b_3P(x)yz
		+ c_1xyP(z)+ c_2P(xy)z
		+ c_3xyz\\
		&&-d_1xP(yP(z))-d_2xP(P(y)z)
		-d_3xP(yz)-e_1P(x)yP(z)-e_2P(xP(y))z-e_2P(P(x)y)z\\
		&&-\lambda e_2P(xy)z-\kappa e_2xyz-e_3P(x)yz-f_1xyP(z)-f_2xP(y)z-f_3xyz
		=0.
	\end{eqnarray*}
Collecting like terms, we get
	\begin{eqnarray*}
		&&(a_1-d_1)xP(yP(z))+(a_1-d_2)xP(P(y)z)+(\lambda a_1-d_3)xP(yz)+(\kappa a_1+ c_3-\kappa e_2-f_3)xyz\\
		&&+(a_2-e_2)P(xP(y))z+ (a_3-f_2)xP(y)z
		+(b_1-e_1)P(x)yP(z)+(b_2-e_2)P(P(x)y)z\\
		&&+(b_3-e_3)P(x)yz
		+(c_1-f_1)xyP(z)+ (c_2-\lambda e_2)P(xy)z
		=0.
	\end{eqnarray*}

Now take $(R,P)$ to be the free \wmrb $(\sha^{\rm NC}_\frake(T^+(M)),\pe)$ on the vector space $M=\bfk\{x,y,z\}$ given in Corollary~\mref{coro:freerbm}. Then with the notation $\pe(u)=\lc u\rf$, the above equation becomes
	\begin{eqnarray*}
		&&(a_1-d_1)x\lf y \lf z\rf\rf+(a_1-d_2)x\lf \lf y\rf z\rf+(\lambda a_1-d_3)x\lf yz\rf+(\kappa a_1+ c_3-\kappa e_2-f_3)xyz\\
		&&+(a_2-e_2)\lf x\lf y\rf\rf z+ (a_3-f_2)x\lf y\rf z
		+(b_1-e_1)\lf x\rf y\lf z\rf +(b_2-e_2)\lf \lf x\rf y\rf z\\
		&&+ (b_3-e_3)\lf x\rf yz
		+ (c_1-f_1)xy\lf z\rf + (c_2-\lambda e_2)\lf xy\rf z
		=0.
	\end{eqnarray*}
Note that the bracketed words appearing in the linear combination: 
	$$x\lf y \lf z\rf\rf, x\lf \lf y\rf z\rf, x\lf yz\rf, xyz, \lc x\lc y\rc z\rc, x\lf y\rf z, \lf x\rf y\lf z\rf, \lf \lf x\rf y\rf z, \lf x\rf yz, xy\lf z\rf, \lf xy\rf z $$
are RBWs, which form part of the basis $\frakX_\infty$. Hence these bracketed words are linearly independent. Thus all coefficients in the above equation must be zero, giving a system of linear equations. Solving this system of linear equations yields
	\begin{eqnarray*}
		&a_1=d_1=d_2,\quad d_3=\lambda a_1, \quad 
		a_2=b_2=e_2, \quad c_2=\lambda a_2,\quad a_3=f_2, &\\
		& b_1=e_1, \quad b_3=e_3,\quad c_1=f_1,\quad f_3=\kappa a_1+c_3-\kappa a_2,&
	\end{eqnarray*}
	where $a_1,a_2,a_3,b_1,b_3,c_1,c_3$ are free variables.
	Substituting these equalities into the general relation $r$ in Eq.~\meqref{eq:rrel}, we find that any relation $r$ that can be satisfied by $\prec_P,\succ_P,\obp$ for all \wmrbs $(R,P)$, in particular for $(\sha^{\rm NC}_{\frakc}(\bfk\{x,y,z\},P_{\frakc})$, must be of the form
	\begin{eqnarray*}
		r&=&a_1\Big((\prec\ot \prec,0)+(0,\prec\ot \prec)+(0,\prec\ot \succ)+\lambda(0,\prec\ot \ob)+{\kappa}(0,\ob\ot\ob)\Big)\\
		&&+a_2\Big((\prec\ot\succ,0)+(\succ\ot \succ,0)+\lambda(\ob\,\ot\succ,0)+\kappa(\ob\ot\ob,0)+(0,\succ\ot \succ)\Big)\\
		&&+a_3\Big((\prec\ot \ob,0)+(0,\ob\ot\succ)\Big)\\
		&&+b_1\Big((\succ\ot \prec,0)+(0,\succ\ot \prec)\Big)\\
		&&+b_3\Big((\succ\ot\ob,0)+(0,\succ\ot\ob)\Big)\\
		&&+c_1\Big((\ob\,\ot \prec,0)+(0,\ob\ot\prec)\Big)\\
		&&+(c_3-a_2\kappa)\Big((\ob\ot\ob,0)+(0,\ob\ot\ob)\Big).
	\end{eqnarray*}
Consequently, $r$ is in $\dfrel_{\rm ETD}$, proving $\ker \Phi \cap (V^{\ot 2}\oplus V^{\ot 2})\subseteq \dfrel_{\rm ETD}$.
	\smallskip
	
Conversely, we have shown in Proposition~\mref{prop:eda}, that the relations listed in Eqs.~\meqref{eq:e1}--\meqref{eq:e7}, namely the elements of $\dfrel_{\rm ETD}$, are satisfied by all extended Rota-Baxter algebras $(R,P)$ of weight $(\lambda,\kappa)$. Hence $\dfrel_{\rm ETD}$ is contained in $\ker \Phi\cap (V^{\ot 2}\oplus V^{\ot 2})$. This completes the proof. 
\end{proof}
%%%%%%%%%%%%%%%%%%%%%%%%

\subsection{Extended dendriform algebras as the di-companion of the extended Rota-Baxter algebras} 
\mlabel{ss:ederb}
From extended Rota-Baxter algebras, we now give another derived structure that has two binary operations instead of three as in the case of the extended tridendriform algebra. Here the situation is more complicated that requires a less direct definition of an extended dendriform algebra of weight $(\lambda,\kappa)$. 

\begin{defn} Let $V=\bfk\{\prec,\succ\}$ be the vector space with basis $\{\prec,\succ\}$ with $\star=\prec+\succ$ and let $\lambda,\kappa\in\bfk$. Define
\begin{equation}		
\dfrel_{\rm ED}:=\dfrel_{\rm ED,\lambda,\kappa}:=\bfk\left\{
\begin{array}{l}
	a(\prec\ot \prec,\prec\ot\star)+b(\succ\ot \prec,\succ\ot\prec)\\
	+c(\star\ot\succ,\succ\ot \succ) \end{array} \left | 
\begin{array}{l} 
\lambda a=\lambda c=0, \kappa(a-c)=0, \\
a,b,c\in\bfk\end{array}\right.
\right \}
\mlabel{eq:edrelation}
\end{equation}
The quotient operad $\oed:=\oed_{\lambda,\kappa}:=\calp(V,\dfrel_{\rm ED,\lambda,\kappa})$ is called the {\bf operad of extended dendriform algebras of weight $(\lambda,\kappa)$}.
Then an $\oed_{\lambda,\kappa}$-algebra is called an {\bf extended dendriform algebra of weight $(\lambda,\kappa)$}.
\end{defn}

We now show that the operad of extended dendriform algebras is the di-companion of the opeard of extended Rota-Baxter algebra. 

\begin{theorem}
Fix $\lambda,\kappa\in \bfk$. The operad $\oed$ is the di-companion $\mathrm{DC}(\oerb)$ of the operad of extended Rota-Baxter algebras. 
More precisely, let $V=\bfk\{\prec,\succ\}$ be the vector space with basis $\{\prec,\succ\}$. For the operadic homomorphism 
$$ \Psi: \calf(V) \longrightarrow \oerb, \quad	\prec\,\mapsto \mu(\id\ot \rho), \quad \succ\,\mapsto \mu(\rho\ot \id),$$
we have $\ker \Psi \cap (V^{\ot 2}\oplus V^{\ot 2})= \dfrel_{\rm ED}$.
	\mlabel{thm:edn}
\end{theorem}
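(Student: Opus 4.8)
The plan is to mirror the strategy used for the tri-companion in the proof of Theorem~\mref{thm:owdn}, now in the two-operation setting where the generator space $V=\bfk\{\prec,\succ\}$ has no multiplication $\ob$, so that a general element of $V^{\ot 2}\oplus V^{\ot 2}$ carries only eight coefficients. First I would write an arbitrary
$$r=a_1(\prec\ot\prec,0)+a_2(\prec\ot\succ,0)+b_1(\succ\ot\prec,0)+b_2(\succ\ot\succ,0)+d_1(0,\prec\ot\prec)+d_2(0,\prec\ot\succ)+e_1(0,\succ\ot\prec)+e_2(0,\succ\ot\succ),$$
with all coefficients in $\bfk$, and translate the condition $r\in\ker\Psi$ into the statement that the associated multilinear identity holds in every \wmrb $(R,P)$ of weight $(\lambda,\kappa)$, where under $\Psi$ one has $x\prec y=xP(y)$ and $x\succ y=P(x)y$.

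Next I would expand the eight ternary expressions explicitly, invoking the extended Rota-Baxter identity \meqref{eq:grb} to rewrite the two products $xP(y)P(z)$ (from $(x\prec y)\prec z$) and $P(x)P(y)z$ (from $x\succ(y\succ z)$) via $P(x)P(y)=P(xP(y))+P(P(x)y)+\lambda P(xy)+\kappa xy$. After substitution and collection of like terms, the identity becomes a $\bfk$-linear combination of the eight bracketed words
$$x\lf y\lf z\rf\rf,\quad x\lf\lf y\rf z\rf,\quad x\lf yz\rf,\quad xyz,\quad \lf x\lf y\rf\rf z,\quad \lf x\rf y\lf z\rf,\quad \lf\lf x\rf y\rf z,\quad \lf xy\rf z,$$
with coefficients linear in the $a_i,b_i,d_i,e_i$. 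Here I must take care that the word $\lf x\rf y\lf z\rf$ receives contributions from both $(x\succ y)\prec z$ and $x\succ(y\prec z)$, which are then combined.

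The decisive step is to evaluate in the free \wmrb on $M=\bfk\{x,y,z\}$ supplied by Corollary~\mref{coro:freerbm}: the eight displayed words are pairwise distinct Rota-Baxter bracketed words, hence lie in the basis $\frakX_\infty$ of Theorem~\mref{thm:free} and are linearly independent. Equating each coefficient to zero yields $d_1=d_2=a_1$, $e_1=b_1$, $a_2=b_2=e_2$, together with the weight constraints $\lambda a_1=0$, $\lambda e_2=0$ and $\kappa(a_1-e_2)=0$ arising from the coefficients of $x\lf yz\rf$, $\lf xy\rf z$ and $xyz$ respectively. Setting $a:=a_1$, $b:=b_1$, $c:=a_2$ and substituting back, the three surviving blocks reassemble—using $\star=\prec+\succ$—into exactly
$$r=a(\prec\ot\prec,\prec\ot\star)+b(\succ\ot\prec,\succ\ot\prec)+c(\star\ot\succ,\succ\ot\succ),\qquad \lambda a=\lambda c=0,\ \kappa(a-c)=0,$$
which is precisely the defining form of $\dfrel_{\rm ED}$ in Eq.~\meqref{eq:edrelation}. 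Since these constraints are both necessary and sufficient for all eight coefficients to vanish, the computation is an equivalence and establishes the two inclusions $\ker\Psi\cap(V^{\ot 2}\oplus V^{\ot 2})\subseteq\dfrel_{\rm ED}$ and $\dfrel_{\rm ED}\subseteq\ker\Psi$ simultaneously.

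I expect the main obstacle to be organizational rather than conceptual: one must track how the $\lambda$- and $\kappa$-terms produced by the extended Rota-Baxter identity generate the three ``extra'' monomials $x\lf yz\rf$, $\lf xy\rf z$ and $xyz$, since it is exactly their coefficients that cut out the weight-dependent constraints $\lambda a=\lambda c=0$ and $\kappa(a-c)=0$ defining $\dfrel_{\rm ED}$. Confirming that the full list of eight monomials consists of genuinely distinct elements of $\frakX_\infty$—so that coefficient comparison is legitimate—is the one point where the free-algebra construction of Section~\mref{sec:FERBA} is indispensable.
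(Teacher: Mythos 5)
Your proposal is correct and follows essentially the same route as the paper's proof: write a general element of $V^{\ot 2}\oplus V^{\ot 2}$, push it through $\Psi$, expand using the extended Rota-Baxter identity, evaluate in the free \wmrb on $\bfk\{x,y,z\}$ from Corollary~\mref{coro:freerbm}, and use linear independence of the eight Rota-Baxter bracketed words to extract precisely the constraints $\lambda a=\lambda c=0$, $\kappa(a-c)=0$ that cut out $\dfrel_{\rm ED}$. The only difference is organizational: the paper proves the two inclusions separately (verifying $\dfrel_{\rm ED}\subseteq\ker\Psi$ by a direct check in an arbitrary \wmrb), whereas you package the coefficient computation as a single equivalence, which is legitimate since the evaluation of $\oerb(3)$ at the generators of the free algebra is injective.
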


\begin{proof}
An element $r$ of $\dfgen^{\otimes 2} \oplus \dfgen^{\otimes 2}$ is of the form
	\begin{equation}
\begin{split}		
	r=&a_1(\prec\ot \prec,0)+a_2(\prec\ot\succ,0)+b_1(\succ\ot \prec,0)+b_2(\succ\ot \succ,0)\\
		&+c_1(0,\prec\ot \prec)+c_2(0,\prec\ot \succ)+d_1(0,\succ\ot \prec)+d_2(0,\succ\ot \succ),
\end{split}
\mlabel{eq:rrell}
	\end{equation}
	where the coefficients $a_i,b_i,c_i,d_i$ are in $\bfk$ for $1\leq i\leq 2$.

If $r$ is in $\ker \Psi$, then for every extended Rota-Baxter algebra $(R,P)$, we have 
	\begin{eqnarray*}
		&&a_1xP(y)P(z)+a_2P(xP(y))z+b_1P(x)yP(z)+b_2P(P(x)y)z\\
		&&-c_1xP(yP(z))-c_2xP(P(y)z)-d_1P(x)yP(z)-d_2P(x)P(y)z
		=0.
	\end{eqnarray*}
Since $P$ is an \wmrbo, we further have
	\begin{eqnarray*}
		&&a_1xP(yP(z))+a_1xP(P(y)z)+\lambda a_1xP(yz)+\kappa a_1xyz+a_2P(xP(y))z\\
		&&+b_1P(x)yP(z)
		+b_2P(P(x)y)z-c_1xP(yP(z))-c_2xP(P(y)z)\\
	&&-d_1P(x)yP(z)-d_2P(xP(y))z-d_2P(P(x)y)z
		-\lambda d_2P(xy)z-\kappa d_2xyz
		=0.
	\end{eqnarray*}
Collecting like terms leads to
	\begin{eqnarray*}
		&&(a_1-c_1)xP(yP(z))+(a_1-c_2)xP(P(y)z)+\lambda a_1xP(yz)+(\kappa a_1-\kappa d_2)xyz\\
		&&+(a_2-d_2)P(xP(y))z
		+(b_1-d_1)P(x)yP(z)+(b_2-d_2)P(P(x)y)z-
	 \lambda d_2P(xy)z
		=0.
	\end{eqnarray*}

Take $(R,P)$ to be the free \wmrb $(\sha^{\rm NC}_\frake(T^+(M)),\pe)$ on the vector space $M=\bfk\{x,y,z\}$ given in Corollary~\mref{coro:freerbm}. Then we get
\begin{eqnarray*}
		&&(a_1-c_1)x\lc y\lc z\rc\rc+(a_1-c_2)x\lc\lc y\rc z\rc+\lambda a_1x\lc yz\rc+(\kappa a_1-\kappa d_2)xyz\\
		&&+(a_2-d_2)\lc x\lc y\rc\rc z
		+(b_1-d_1)\lc x\rc y\lc z\rc+(b_2-d_2)\lc\lc x\rc y\rc z-
	 \lambda d_2\lc xy\rc z
		=0.
	\end{eqnarray*}

Now the bracketed words occurring in the above equation: 
	$$x\lf y \lf z\rf\rf, x\lf \lf y\rf z\rf, x\lf yz\rf, xyz, \lc x\lc y\rc z\rc, x\lf y\rf z, \lf x\rf y\lf z\rf, \lf \lf x\rf y\rf z, \lf x\rf yz, xy\lf z\rf, \lf xy\rf z $$
are RBWs and hence are part of a basis of $\frakX_\infty$. Thus, they are linearly independent. So all coefficients in the above equation must be zero, yielding a system of linear equations: 
\begin{equation*}
\left\{\begin{array}{llll}
a_1-c_1=0,&
a_1-c_2=0,&
\lambda a_1=0,&
\kappa(a_1-d_2)=0,\\
a_2-d_2=0,&
b_2-d_2=0,&
b_1-d_1=0,&
\lambda d_2=0.
\end{array}\right.
\end{equation*}
Solving this system of linear equations, we obtain
\begin{equation}
\begin{split}
\left\{\begin{array}{lll}
		a_1=c_1=c_2,&\lambda a_1=0,& \kappa(a_1-d_2)=0,\\
		d_2=a_2=b_2, &
 b_1=d_1,&
\lambda d_2=0,
\end{array}\right.
\end{split}
\mlabel{eq:edrell}
\end{equation}
where $a_1,d_2,b_1$ are free variables. Substituting these equations into the general relation $r$ in Eq.~\meqref{eq:rrell} leads to $r\in \dfrel_{\rm ED}$.
This proves $\ker  \Psi\cap (V^{\ot 2}\oplus V^{\ot 2}) \subseteq \dfrel_{\rm ED}$. 
	
On the other hand, any $r\in \dfrel_{\rm ED}$ can be expressed as $$r=a(\succ\ot\succ,\succ\ot\star)+b(\succ\ot\prec,\prec\ot\succ)+c(\star\ot \succ,\succ\ot\succ),$$ 
where $\lambda a=\lambda c=0$ and $\kappa(a-c)=0$. By a direct computation, $\Psi(r)$ is satisfied by every \wmrb $(R,P)$ of weight $(\lambda,\kappa)$. Thus, $\dfrel_{\rm ED} \subseteq \ker \Psi \cap (V^{\ot 2}\oplus V^{\ot 2})$. This completes the proof. 
\end{proof}

By Theorem~\mref{thm:edn}, we obtain
\begin{coro}With the above notations, we have 
\begin{equation*}
\dfrel_{\rm ED,\lambda,\kappa}:=\left\{\begin{array}{ll}
\bfk\{(\prec \ot \prec,\prec\ot \star),\;(\succ \ot \prec,\succ\ot \prec), \;(\star\ot \succ, \succ\ot \succ)\}, &\text{if}\; \lambda=0$,\, $\kappa= 0,\\
\bfk\{(\prec \ot \prec+\star\ot \succ,\prec\ot \star+\succ\ot \succ),\,(\succ \ot \prec,\succ\ot \prec)\},&\text{if}\;\lambda=0,\,\kappa\neq 0,\\
\bfk\{(\succ \ot \prec,\succ\ot \prec)\},&\text{if}\; \lambda\neq0.\\
\end{array}\right.
\end{equation*}
\mlabel{co:threecases}
\end{coro}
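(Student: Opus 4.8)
The plan is to derive the three case formulas for $\dfrel_{\rm ED,\lambda,\kappa}$ directly from the defining constraints $\lambda a=\lambda c=0$ and $\kappa(a-c)=0$ in Eq.~\meqref{eq:edrelation}, which Theorem~\mref{thm:edn} has already identified as the correct relation space. The key observation is that $\dfrel_{\rm ED}$ is cut out of the three-parameter family spanned by the generators
$$g_a:=(\prec\ot\prec,\prec\ot\star),\quad g_b:=(\succ\ot\prec,\succ\ot\prec),\quad g_c:=(\star\ot\succ,\succ\ot\succ)$$
by imposing the linear conditions on $(a,b,c)$. So the entire argument is a case analysis on whether $\lambda$ and $\kappa$ vanish, tracking which of $a,b,c$ are forced to be zero or equal.

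First I would treat the case $\lambda=0,\ \kappa=0$: here all three constraints are vacuous, so $a,b,c$ range freely over $\bfk$, and $\dfrel_{\rm ED}$ is the full span $\bfk\{g_a,g_b,g_c\}$, which is exactly the first line of the corollary. Second, for $\lambda=0,\ \kappa\neq 0$: now $\lambda a=\lambda c=0$ remain vacuous but $\kappa(a-c)=0$ forces $a=c$. Writing $a=c$ and leaving $b$ free, the surviving space is $\bfk\{g_a+g_c,\ g_b\}$; here I would expand $g_a+g_c=(\prec\ot\prec+\star\ot\succ,\ \prec\ot\star+\succ\ot\succ)$ to match the second line, noting that $g_b=(\succ\ot\prec,\succ\ot\prec)$ gives the remaining generator. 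Third, for $\lambda\neq 0$: the conditions $\lambda a=0$ and $\lambda c=0$ force $a=c=0$, and then $\kappa(a-c)=0$ is automatic, so only $b$ survives and $\dfrel_{\rm ED}=\bfk\{g_b\}=\bfk\{(\succ\ot\prec,\succ\ot\prec)\}$, matching the third line.

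The one point requiring care, rather than genuine difficulty, is the bookkeeping of the generator $g_a$: in Eq.~\meqref{eq:edrelation} the first component is $a(\prec\ot\prec,\prec\ot\star)$, whereas in the body of the proof of Theorem~\mref{thm:edn} the same space is rewritten using $\succ\ot\succ$ in place of $\prec\ot\prec$ via the associativity-type identities available in an extended dendriform algebra. I would fix one consistent representative set $\{g_a,g_b,g_c\}$ at the outset, verify that it indeed spans the relation space described by Eq.~\meqref{eq:edrelation}, and then simply read off each case. Since no nontrivial operadic computation is needed beyond substituting the constraints, the main (and only) obstacle is ensuring the three displayed formulas are written in the exact generator basis claimed; the result then follows immediately from Theorem~\mref{thm:edn}.
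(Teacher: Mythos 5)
Your proof is correct and is essentially the paper's own argument: the paper proves the corollary by the same three-way case analysis on the vanishing of $\lambda$ and $\kappa$ applied to the linear system in Eq.~\meqref{eq:edrell}, which under the reduced parametrization $(a,b,c)=(a_1,b_1,d_2)$ is exactly your analysis of the constraints $\lambda a=\lambda c=0$ and $\kappa(a-c)=0$ from Eq.~\meqref{eq:edrelation}. One small caveat: your aside attributing the variant expression appearing at the end of the proof of Theorem~\mref{thm:edn} to ``associativity-type identities'' is off the mark --- that expression is just a typographical slip in the paper --- but since you work consistently with the generators of Eq.~\meqref{eq:edrelation}, this does not affect your argument.
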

\begin{proof} To solve Eq.~\meqref{eq:edrell}, we consider the following three cases.
\smallskip

\noindent
{\bf Case 1.} $\lambda=0$ and $\kappa= 0$. Then
$$
a_1=c_1=c_2,\;
d_2=a_2=b_2,\;
b_1=d_1.
$$
Thus, $\dfrel_{\rm ED,\lambda,\kappa}=\bfk\{(\prec \ot \prec,\prec\ot \star),\;(\succ \ot \prec,\succ\ot \prec), \;(\star\ot \succ, \succ\ot \succ)\}$.
\smallskip

\noindent
{\bf Case 2.} $\lambda=0$ and $\kappa\neq 0$. Then $a_1=d_2$. Thus
$$
a_1=c_1=c_2=d_2=a_2=b_2,\;
 b_1=d_1.
$$
So $\dfrel_{\rm ED,\lambda,\kappa}=\bfk\{(\prec \ot \prec+\star\ot \succ,\prec\ot \star+\succ\ot \succ),\,(\succ \ot \prec,\succ\ot \prec)\}$.
\smallskip

\noindent
{\bf Case 3.} $\lambda\neq 0$. Then
$$
a_1=c_1=c_2=d_2=a_2=b_2=0, \;
 b_1=d_1.
$$
Hence, $\dfrel_{\rm ED,\lambda,\kappa}=\bfk\{(\succ \ot \prec,\succ\ot \prec)\}$.
\end{proof}

In summary, depending on the vanishings of $\lambda$ and $\kappa$, there are three types of extended dendriform algebras as follows. 

\begin{defn}Let $\oed_{\lambda,\kappa}$ be the  operad of extended dendriform algebras of weight $(\lambda,\kappa)$.
 When $\lambda=0$ and $\kappa= 0$ (resp. $\lambda=0$ and $\kappa\neq 0$, resp. $\lambda\neq 0$), then the corresponding $\oed_{\lambda,\kappa}$-algebra is called a {\bf type I} (resp. {\bf II}, resp. {\bf III}) {\bf extended dendriform algebra of weight $(\lambda,\kappa)$}.
\mlabel{defn:type}
\end{defn}

\begin{remark}By Corollary~\mref{co:threecases},
we see that a type I extended dendriform algebra of weight $(\lambda,\kappa)$ is just a dendriform algebra. 
By  Definition~\mref{defn:edri}, a type II extended dendriform algebra  of weight $(\lambda,\kappa)$ is exactly an extended dendriform algebra.  Furthermore,  a type III extended dendriform algebra is precisely an $L$-algebra introduced in~\cite[Definition~1.3]{LP03}.
\end{remark}

\smallskip

\noindent {\bf Acknowledgements}: This work is supported by the National Natural Science Foundation of
China (12326324, 12461002) and Jiangxi Provincial Natural Science Foundation (20224BAB201 003). The first author thanks the Chern Institute of Mathematics at Nankai University for hospitality.

\noindent
{\bf Declaration of interests. } The authors have no conflicts of interest to disclose.

\noindent
{\bf Data availability. } Data sharing is not applicable as no new data were created or analyzed.

\end{document}